\documentclass[11pt]{amsart}
\usepackage[english]{babel}
\usepackage{amssymb,amscd}

\usepackage[pagewise]{lineno}

\usepackage[all]{xy}

\usepackage{euscript}
\usepackage{amsmath}
\usepackage{ifpdf}

\usepackage{amsfonts}
\usepackage{mathrsfs}

\usepackage{amsmath}

\usepackage{mathpazo}
\usepackage{hyperref}

\selectlanguage{english} \textwidth=6in \textheight=22cm
\flushbottom \hoffset=-1.3cm \voffset=-1.3cm

\newtheorem{theorem}{Theorem}[section]
\newtheorem{lemma}[theorem]{Lemma}
\newtheorem{corollary}[theorem]{Corollary}
\newtheorem{proposition}[theorem]{Proposition}

\newtheorem{remark}[theorem]{Remark}
\newtheorem{example}[theorem]{Example}

\newtheorem{definition}[theorem]{Definition}
\newtheorem{notation}[theorem]{Notation}
\newtheorem{question}[theorem]{Question}

\def\sqr#1#2{{\vcenter{\hrule height.#2pt
\hbox{\vrule width.#2pt height#1pt \kern#1pt \vrule width.#2pt}
\hrule height.#2pt}}}

\def\qed{\hspace*{\fill} $\square$}

\def\gcdimit{\textrm{\emph{G}}_C\textrm{\emph{-dim}}_R}

\def\extit{\textrm{\emph{Ext}}_{R}}

\def\gcit{\textrm{\emph{G}}_C}

\begin{document}

\title[Reduced
$\textrm{G}$-perfection and linkage relative to a semidualizing
module]{On reduced
$\textrm{G}$-perfection and horizontal linkage relative to a semidualizing
module}

\author{Cleto B.~Miranda-Neto}
\address{Departamento de Matem\'atica, Universidade Federal da
Para\'iba, 58051-900 Jo\~ao Pessoa, PB, Brazil.}
\email{cleto@mat.ufpb.br}

\author{Thyago S.~Souza}
\address{Unidade Acad\^emica de Matem\'atica, Universidade Federal de Campina Grande, 58429-970 Campina Grande, PB, Brazil.}
\email{thyago@mat.ufcg.edu.br}

\thanks{Corresponding author: C. B. Miranda-Neto  (cleto@mat.ufpb.br).}

\subjclass[2010]{Primary: 13D05, 13D02, 13D07, 13C40; Secondary: 13C15} \keywords{Gorenstein
dimension, semidualizing module, Auslander transpose, reduced G-perfect module, horizontal linkage.}

\begin{abstract}

In their investigation of horizontal linkage of modules of finite Gorenstein dimension over a commutative, Noetherian, semiperfect (e.g., local) ring, Dibaei and Sadeghi introduced the class of reduced G-perfect modules, making use of Bass' concept of reduced grade. A few years later, the same authors extended this class by considering the relative property of reduced G$_C$-perfection, where $C$ is a semidualizing module, and studied linkage even further. In the present paper, we contribute to their theory and also generalize results of Auslander and Bridger as well as of Martsinkovsky and Strooker. Our investigation includes, for example, when reduced G$_C$-perfection is preserved by relative Auslander transpose, and how to numerically characterize horizontally linked modules under suitable conditions. Along the way, we show how to produce
reduced $\textrm{G}_C$-perfect
modules that are also $C$-$k$-torsionless (for a given integer $k\geq 0$) but fail to be $\textrm{G}_C$-perfect, and moreover we illustrate that, in contrast to the usual grade, the relative reduced grade does depend on the choice of $C$. 
\end{abstract}

\maketitle

\section{Introduction}

Since the seminal work of Auslander and Bridger \cite{AB}, the theory of Gorenstein dimension -- G-dimension, for short -- has been further expanded and received increasing attention. The present paper is concerned with the interplay between  
G-dimension taken with respect to a semidualizing module $C$ and the concept of reduced grade relative to $C$ (the latter extending the notion of reduced grade due to Bass \cite{Bass}), over a ring $R$ belonging to a suitable class which contains the class of (commutative, Noetherian) local rings.

More specifically, as anticipated in the abstract, one of our main goals is to contribute to the theory mostly by extending, non-trivially, some of the results of
Dibaei and Sadeghi \cite{link2013} to the relative context taking into account the presence of $C$. Such relative theory, including the property of reduced G$_C$-perfection as well as the relative operations of Auslander transpose and horizontal linkage, was developed by the same authors in \cite{link2015}, \cite{link2017}, also Sadeghi \cite{link2016}; here it should be mentioned that the idea of relative Auslander transpose is due to Foxby \cite{foxby72}. Along the way, we furthermore improve results of Auslander and Bridger \cite{AB} as well as of Martsinkovsky and Strooker \cite{link2004}, and provide examples illustrating properties which, as far as we know, have gone unnoticed in the literature.

It is worth mentioning that the notion of semidualizing module has become a central object
in homological algebra since its inception independently by Foxby \cite{foxby72}, Golod \cite{golod84}, and Vasconcelos
\cite{vasconcelos74}. Many authors have studied and justified the concept under different viewpoints and objectives; we refer, besides the references already mentioned and just to quote a few, to \cite{chris2001},  
\cite{holmjorgensen2006}, \cite{salimisather2012},
\cite{salimiyassemi2012}, \cite{Yassemi2014},
\cite{salimi2015}, \cite{satherwhite2010i},
\cite{satherwhite2010ii}, \cite{satheryassemi2009}, \cite{TakWhi2010},
\cite{white2010}.

Next we briefly describe the content of the paper. 

Basic notions and auxiliary facts are collected in Section \ref{aux}. The central concept of reduced $\textrm{G}_C$-perfect module is defined and studied systematically in Section \ref{sec31}, where in particular an equality is given connecting the $\textrm{G}_C$-dimension of such a module with the
$\textrm{G}_C$-dimensions of certain related modules, for instance its relative Auslander transpose (see Theorem \ref{teo.Gperf.redu.formula.da.GCdim.generalizado}). The same section provides examples, which seem to be new, illustrating that (in  grade zero) the relative reduced grade may depend on the choice of $C$ (see Example \ref{conductor}). In Section \ref{sec32}, some results of Auslander and Bridger \cite{AB} are extended to the context of
$\textrm{G}_C$-dimension, and formulas are presented relating the
grade and the reduced grade with respect to $C$. In Example \ref{ex2.GCperfeito.reduzido} a recipe is given to produce reduced $\textrm{G}_C$-perfect
modules that are not $\textrm{G}_C$-perfect.
The main result of the section is Theorem \ref{teo.inspirado.no.cor.3.16.paper3} as it investigates conditions under which
the reduced $\textrm{G}_C$-perfect property is preserved under the operation of taking
Auslander transpose with respect to $C$. A main consequence is Corollary \ref{cor.3.16.paper3.generalizado}, which is a generalization  of Dibaei and Sadeghi \cite[Corollary 3.6]{link2013}. In Section \ref{sec33}, the interplay between reduced $\textrm{G}_C$-perfection and horizontal linkage is treated, and improvements are given of some
results of Martsinkovsky and Strooker \cite{link2004} and of
Dibaei and Sadeghi \cite{link2013}; for instance, Theorem \ref{prop.teo1.paper1.generalizado} generalizes \cite[Theorem 1]{link2004}, and Theorem \ref{teo.prop.3.5ii.paper3.generalizada} significantly extends \cite[Proposition 3.5(ii)]{link2013}; the latter theorem, in an appropriate setting, characterizes horizontally linked modules numerically. Finally, in Section \ref{sec34}, the class of $C$-$k$-torsionless modules, for an integer $k\geq 0$, is invoked and shown to afford no general containment with respect to the class of reduced
$\textrm{G}_C$-perfect modules, as observed in Remark \ref{ex0.GCperfeito.reduzido.e.cktorsionless}. A necessary and sufficient
condition for a reduced $\textrm{G}_C$-perfect module to be
$C$-$k$-torsionless is provided by
Proposition \ref{prop.cktorsionless.gcperfeito}, and in Example \ref{ex.GCperfeito.reduzido.e.cktorsionless} a family of modules lying in the intersection of the two classes is described.

\section{Preliminaries and auxiliary results}\label{aux}

Throughout the paper, by {\it ring} we mean a semiperfect Noetherian commutative
ring with multiplicative identity $1$.  Wherever it appears, $R$ stands for a ring. By a {\it finite} $R$-module we mean a finitely generated
$R$-module. For general homological algebra, we refer to \cite{W}.

Recall that, in the present commutative context, a ring is semiperfect if and only if it is a direct product of finitely many local rings (see \cite[Theorem 23.11]{Lam}). Thus, every local commutative ring is semiperfect. We point out that semiperfection is (implicitly) required in this paper essentially due to the fact that, over any ring with this property, every finite module possesses a projective cover, hence a minimal presentation.
Moreover, there is a useful operator which is well-defined in this setting and allows for a deeper investigation of horizontal linkage of modules; such operator, typically denoted by $\lambda$ (see \cite{link2004}), will be considered later on.

We invoke some well-known important notions and auxiliary facts.

\begin{definition}\label{def.semidualizante}\rm
A finite $R$-module $C$ is called a \emph{semidualizing} module if the following properties hold:
\begin{itemize}
\item[(i)] The homothety morphism $R \rightarrow \textrm{Hom}_{R}(C, C)$ is an isomorphism;
\item[(ii)] $\textrm{Ext}_{R}^i(C, C) = 0$ for all $i > 0$.
\end{itemize}
If, in addition, $C$ has finite injective dimension over $R$, then $C$ is said
to be a \emph{dualizing} module.
\end{definition}

\begin{remark} \label{ex.semidualizante}\rm It is evident that $R$ is semidualizing as a module over itself. If $R$ is a Cohen-Macaulay local ring having a canonical module $\omega_R$ (which exists if and only if $R$ is a homomorphic image of a Gorenstein local ring), then $\omega_R$ is a
dualizing module. There are plenty of examples of local rings admitting a semidualizing module which is neither free nor dualizing; see, for instance, \cite[Example 5.3]{arayaiima} and \cite[Example 1.2]{JLS}.
\end{remark}

\begin{notation}\rm
In the entire paper, $C$ stands (wherever it appears) for a semidualizing module over a given ring $R$.
We denote by $(-)^C = \textrm{Hom}_{R}(-, C)$  the duality
functor with respect to $C$. In the particular case
where $C = R$, we use the typical notation $(-)^* = \textrm{Hom}_{R}(-, R)$. If
$M$ is an $R$-module, we denote by $\sigma_M^C \colon M \rightarrow M^{C C}$
the natural biduality map with respect to $C$, which is defined by
$\sigma_M^C(x)(f) = f(x)$ for any $x \in M$ and $f \in M^C$.
\end{notation}

\begin{definition}\rm A finite $R$-module $M$ is called \emph{$C$-reflexive} if $\sigma_M^C$ is an isomorphism.
\end{definition}

\begin{definition}\label{def.Creflexivo}\rm
A finite $R$-module $M$ is said to be \emph{totally $C$-reflexive} if $M$ is $C$-reflexive and
$$\textrm{Ext}_{R}^i(M,\,C) \, = \, \textrm{Ext}_{R}^i(M^{C},\,C) \, = \, 0$$ for all $i > 0$.
In particular, if $C = R$, a totally $R$-reflexive module
is simply a totally reflexive module in the usual sense.
\end{definition}

The result below describes the behavior of total
$C$-reflexivity along a short exact sequence.

\begin{proposition} \label{prop.item2.golod}
$($\cite[Proposition 5.1.1 and Proposition 5.1.3]{sather}$)$. Consider a short
exact sequence of finite $R$-modules $0 \rightarrow M' \rightarrow M
\rightarrow M'' \rightarrow 0$. If $M', M''$ or $M, M''$ are
totally $C$-reflexive, then the third module is totally
$C$-reflexive as well. If $M', M$ are totally $C$-reflexive and
$\textrm{\emph{Ext}}_{R}^1(M'', C) = 0$, then $M''$ is totally
$C$-reflexive.
\end{proposition}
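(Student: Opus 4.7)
The plan is to apply the duality functor $(-)^C$ twice to the given short exact sequence and extract the two conclusions—vanishing of $\textrm{Ext}_{R}^{\geq 1}(-,C)$ and the biduality isomorphism $\sigma^C$—by combining the resulting long exact Ext sequences with a Five Lemma argument driven by the naturality of $\sigma^C$.

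First I would examine the long exact Ext sequence
\[
\cdots \to \textrm{Ext}_{R}^i(M'',C) \to \textrm{Ext}_{R}^i(M,C) \to \textrm{Ext}_{R}^i(M',C) \to \textrm{Ext}_{R}^{i+1}(M'',C) \to \cdots .
\]
In each scenario, vanishing of $\textrm{Ext}_{R}^{\geq 1}(-,C)$ for two of the three modules forces vanishing for the third: in the two cases of the first assertion this is immediate from the long exact sequence, while in the second assertion the terms $\textrm{Ext}_{R}^{\geq 2}(M'',C)$ vanish automatically from the total $C$-reflexivity of $M'$ and $M$, and the remaining potential gap is plugged by the stated hypothesis $\textrm{Ext}_{R}^1(M'',C)=0$. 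Consequently, the dualized sequence $0 \to (M'')^C \to M^C \to (M')^C \to 0$ is short exact in every case.

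Next I would apply $(-)^C$ once more to this dualized sequence. Using total $C$-reflexivity of the two hypothesized modules—equivalently, vanishing of $\textrm{Ext}_{R}^{\geq 1}(-,C)$ on their $C$-duals—one obtains both the vanishing of $\textrm{Ext}_{R}^{\geq 1}(-,C)$ for the remaining $C$-dual and exactness of the bidualization $0 \to (M')^{CC} \to M^{CC} \to (M'')^{CC} \to 0$. By naturality of $\sigma^C$, this bidualization and the original sequence fit in a commutative ladder with vertical arrows $\sigma_{M'}^C,\ \sigma_M^C,\ \sigma_{M''}^C$; since two of these are isomorphisms by hypothesis, the Five Lemma forces the third to be an isomorphism, completing the verification that the target module is totally $C$-reflexive.

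The main bookkeeping obstacle is ensuring, at each dualization step, that short exactness persists—i.e., that the relevant $\textrm{Ext}_{R}^1$-term actually vanishes—rather than merely obtaining the first three terms of a long exact sequence. In the two cases of the first assertion, this follows automatically from the Ext vanishing of the two hypothesized totally $C$-reflexive modules; in the second assertion, it is precisely the role of the extra hypothesis $\textrm{Ext}_{R}^1(M'',C)=0$ to provide the missing input.
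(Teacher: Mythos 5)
The paper gives no proof of this statement; it simply cites \cite[Propositions 5.1.1 and 5.1.3]{sather}. Your argument --- dualize once to get a short exact sequence of $C$-duals, dualize again, and compare with the original sequence via the naturality of $\sigma^C$ and the five lemma --- is the standard proof of this fact, and it is essentially correct: the first dualization step and the two sandwich arguments for $\operatorname{Ext}$-vanishing are exactly right in all three cases, and the five-lemma ladder does deliver the missing biduality isomorphism.

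One step needs tightening, in the case where $M$ and $M''$ are totally $C$-reflexive and you conclude for $M'$. From the long exact sequence attached to $0 \to (M'')^C \to M^C \to (M')^C \to 0$ you get
$M^{CC} \to (M'')^{CC} \to \operatorname{Ext}^1_R((M')^C,C) \to \operatorname{Ext}^1_R(M^C,C)=0$,
so $\operatorname{Ext}^1_R((M')^C,C)$ is the cokernel of $M^{CC} \to (M'')^{CC}$; its vanishing does \emph{not} follow from $\operatorname{Ext}$-vanishing on $M^C$ and $(M'')^C$ alone, contrary to what your phrase ``equivalently, vanishing of $\operatorname{Ext}^{\geq 1}(-,C)$ on their $C$-duals'' suggests (total $C$-reflexivity is strictly more than that). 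You must also use that $M^{CC}\to (M'')^{CC}$ is surjective, which does follow from your hypotheses via the naturality square for $\sigma^C$ applied to the surjection $M\to M''$ together with the fact that $\sigma^C_{M''}$ is an isomorphism. With that one-line addition (and the analogous observation is not needed in the other two cases, where the relevant $\operatorname{Ext}^1$ is genuinely sandwiched between two vanishing terms), the proof is complete.
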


Next, we recall the definition of ${\rm G}_C$-dimension, i.e. Gorenstein dimension with respect to $C$. For a comprehensive treatment of the general theory on this invariant, we refer to \cite{sather}.

\begin{definition}\rm
Let $M$ be a finite $R$-module. A $\textrm{G}_C$\emph{-resolution} of $M$ is an exact sequence
$$\cdots \rightarrow X_i \rightarrow X_{i-1} \rightarrow \cdots
\rightarrow X_0 \rightarrow M \rightarrow 0$$ such that $X_i$ is
totally $C$-reflexive for all $i \geq 0$. Note that every finite
projective $R$-module is totally $C$-reflexive (see
\cite[Proposition 2.1.13(b)]{sather}), and hence every finite $R$-module admits
a $\textrm{G}_C$-resolution. The smallest non-negative integer $n$
for which there exists a $\textrm{G}_C$-resolution
$$0 \rightarrow X_n \rightarrow X_{n-1} \rightarrow \cdots
\rightarrow X_0 \rightarrow M \rightarrow 0$$ is the
$\textrm{G}_C$\emph{-dimension} of $M$, and in this case we write $\textrm{G}_C\textrm{-dim}_R(M)=n$. If such $n$ does not exist, we say that $M$ has {\it infinite} $\textrm{G}_C$-{\it dimension} and we write $\textrm{G}_C\textrm{-dim}_R(M)=\infty$. Note that by taking $C = R$ we recover the usual Gorenstein dimension $\textrm{G}\textrm{-dim}_R(M)$ of $M$.
\end{definition}

\begin{remark} \label{obs.carac.GCdim.zero}\rm A finite $R$-module $M$ is totally
$C$-reflexive if and only if $\textrm{G}_C\textrm{-dim}_R(M) =
0$. If $M$, $N$ are finite $R$-modules, \cite[Proposition 2.1.4]{sather} gives that
$\textrm{G}_C\textrm{-dim}_R(M\oplus N) = 0$ if and only if
$\textrm{G}_C\textrm{-dim}_R(M) = 0$ and
$\textrm{G}_C\textrm{-dim}_R(N) = 0$.
\end{remark}

\begin{proposition} \label{prop.6.1.7.sather}
$($\cite[Proposition 6.1.7]{sather}$)$. Let $M$ be a non-zero finite $R$-module
of finite $\gcit$-dimension. Then
$$\gcdimit (M) \, = \,  \sup \{ i \geq 0 \mid \extit^i(M,\,C) \neq 0 \}.$$
\end{proposition}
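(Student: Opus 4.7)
The plan is to prove the asserted equality by showing two inequalities, with $g := \gcdim(M)$ and $s := \sup\{i \geq 0 \mid \ext^i(M, C) \neq 0\}$. For the upper bound $s \leq g$, I would fix a $\gc$-resolution $0 \to X_g \to X_{g-1} \to \cdots \to X_0 \to M \to 0$ and introduce the syzygies $K_0 = M$, $K_i = \ker(X_{i-1} \to X_{i-2})$ for $1 \leq i \leq g-1$, and $K_g = X_g$. Since each $X_i$ is totally $C$-reflexive, $\ext^j(X_i, C) = 0$ for all $j \geq 1$, so the long exact Ext sequence attached to $0 \to K_{i+1} \to X_i \to K_i \to 0$ gives the dimension-shift isomorphisms $\ext^{j+1}(K_i, C) \cong \ext^j(K_{i+1}, C)$ for every $j \geq 1$. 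Iterating yields $\ext^{j+g}(M, C) \cong \ext^j(X_g, C) = 0$ for all $j \geq 1$, proving $\ext^k(M, C) = 0$ whenever $k > g$, whence $s \leq g$.

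For the lower bound $s \geq g$, I would argue by contradiction, supposing $s < g$. If $g = 0$, then $M$ is totally $C$-reflexive (Remark \ref{obs.carac.GCdim.zero}) and non-zero, so the biduality isomorphism $M \cong M^{CC}$ forces $\ext^0(M, C) = M^C \neq 0$; this gives $s \geq 0 = g$, contradicting $s < g$. If $g \geq 1$, the same iteration (stopping at $i = g-1$ rather than $i = g$) yields $\ext^j(K_{g-1}, C) \cong \ext^{j+g-1}(M, C)$ for $j \geq 1$, each of which vanishes since $j + g - 1 \geq g > s$. In particular $\ext^1(K_{g-1}, C) = 0$, and the third conclusion of Proposition \ref{prop.item2.golod} applied to $0 \to X_g \to X_{g-1} \to K_{g-1} \to 0$ (whose first two terms are totally $C$-reflexive) forces $K_{g-1}$ to be totally $C$-reflexive. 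Splicing $K_{g-1}$ back into the remainder of the original sequence then produces a $\gc$-resolution of $M$ of length $g - 1$, contradicting the minimality of $g$.

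The main obstacle is keeping the indexing straight throughout the dimension-shift argument, and handling the $g = 0$ base case separately, since the shortening mechanism requires a nontrivial top to the resolution; biduality supplies the needed substitute there. Once those points are in place, the rest reduces to a careful application of the long exact Ext sequence together with the third assertion of Proposition \ref{prop.item2.golod}, which is precisely what converts the vanishing of $\ext^1$ into total $C$-reflexivity and thereby closes the lower-bound argument.
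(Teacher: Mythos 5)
Your proof is correct. Note that the paper itself offers no proof of this proposition: it is quoted with a citation to Sather-Wagstaff's notes, so there is no internal argument to compare against. Your dimension-shifting argument is the standard one and is essentially how the cited source proceeds: the isomorphisms $\textrm{Ext}_{R}^{j}(K_{i+1},C)\cong \textrm{Ext}_{R}^{j+1}(K_{i},C)$ coming from the vanishing of $\textrm{Ext}_{R}^{j}(X_i,C)$ for $j\geq 1$ give the inequality $s\leq g$, and in the other direction the vanishing of $\textrm{Ext}_{R}^{1}(K_{g-1},C)$ together with the third clause of Proposition \ref{prop.item2.golod} shortens the resolution and contradicts minimality, with biduality ($M\cong M^{CC}$, hence $M^C\neq 0$) correctly disposing of the case $g=0$, where the set $\{i\mid \textrm{Ext}_{R}^i(M,C)\neq 0\}$ must be shown nonempty. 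The only blemish is a harmless indexing slip: $K_1$ should be $\ker(X_0\to M)$ rather than $\ker(X_0\to X_{-1})$, which does not affect the argument.
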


\begin{proposition} \label{prop.item6.golod}
$($\cite[Proposition 6.1.8]{sather}$)$. Consider a short exact sequence of finite
$R$-modules $0 \rightarrow M' \rightarrow M \rightarrow M''
\rightarrow 0$. If two of them have finite $\gcit$-dimension,
then so does the third.
\end{proposition}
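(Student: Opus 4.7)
The plan is to reduce the problem to the case of $\gc$-dimension zero, i.e., total $C$-reflexivity, by passing to sufficiently deep syzygies, and then to invoke Proposition \ref{prop.item2.golod}. Given the short exact sequence $0 \rightarrow M' \rightarrow M \rightarrow M'' \rightarrow 0$, applying the horseshoe lemma to projective resolutions of $M'$ and $M''$ produces compatible projective resolutions of all three modules; for every $n \geq 0$ the induced sequence of $n$-th syzygies
$$0 \rightarrow \Omega^n M' \rightarrow \Omega^n M \rightarrow \Omega^n M'' \rightarrow 0$$
is exact. I will also rely on the following standard fact: for a finite $R$-module $N$ of finite $\gc$-dimension, the syzygy $\Omega^n N$ is totally $C$-reflexive for all $n \geq \gcdim(N)$. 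This follows by comparing a $\gc$-resolution with a projective resolution via a generalized Schanuel argument, using that projectives are totally $C$-reflexive and that direct summands of totally $C$-reflexive modules are totally $C$-reflexive (Remark \ref{obs.carac.GCdim.zero}).

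Next I would handle three cases according to which pair of modules is assumed to have finite $\gc$-dimension. First, if $M'$ and $M''$ have finite $\gc$-dimension, take $n = \max\{\gcdim(M'), \gcdim(M'')\}$; then $\Omega^n M'$ and $\Omega^n M''$ are totally $C$-reflexive, and Proposition \ref{prop.item2.golod} forces $\Omega^n M$ to be totally $C$-reflexive as well, whence $\gcdim(M) \leq n$. Second, if $M$ and $M''$ have finite $\gc$-dimension, with the same choice of $n$ the modules $\Omega^n M$ and $\Omega^n M''$ are totally $C$-reflexive, and the symmetric part of Proposition \ref{prop.item2.golod} yields that $\Omega^n M'$ is totally $C$-reflexive, giving $\gcdim(M') \leq n$.

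The main obstacle is the remaining case, where $M'$ and $M$ have finite $\gc$-dimension, because the relevant part of Proposition \ref{prop.item2.golod} requires the extra vanishing $\ext^1(\Omega^n M'', C) = 0$. The plan here is to meet this condition by choosing $n$ large enough. By Proposition \ref{prop.6.1.7.sather}, both $\ext^i(M', C)$ and $\ext^i(M, C)$ vanish for all sufficiently large $i$, and the long exact sequence of $\ext(-, C)$ arising from $0 \rightarrow M' \rightarrow M \rightarrow M'' \rightarrow 0$ then forces $\ext^i(M'', C) = 0$ for all $i \geq s$ for some integer $s$. Picking $n \geq \max\{\gcdim(M'), \gcdim(M), s - 1\}$, the standard dimension-shift isomorphism yields $\ext^1(\Omega^n M'', C) \cong \ext^{n+1}(M'', C) = 0$, so Proposition \ref{prop.item2.golod} certifies that $\Omega^n M''$ is totally $C$-reflexive, and hence $\gcdim(M'') \leq n < \infty$.
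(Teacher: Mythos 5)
Your argument is correct. Note that the paper does not actually prove this proposition; it simply cites \cite[Proposition 6.1.8]{sather}, so there is no in-paper proof to compare against. What you give is the standard self-contained argument: pass to the horseshoe syzygy sequences $0 \rightarrow \Omega^n M' \rightarrow \Omega^n M \rightarrow \Omega^n M'' \rightarrow 0$, reduce to $\gc$-dimension zero, and apply the two-out-of-three behaviour of total $C$-reflexivity (Proposition \ref{prop.item2.golod}); your handling of the delicate third case, where the missing hypothesis $\ext^1(\Omega^n M'', C)=0$ is secured by pushing $n$ past the vanishing range of $\ext^i(M',C)$ and $\ext^i(M,C)$ via the long exact sequence, is exactly right. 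The one step you assert rather than prove is that $\Omega^n N$ is totally $C$-reflexive whenever $n \geq \gcdim(N)$; this is true and standard (it is the resolution-independence of $\gc$-dimension, \cite[Chapter 6]{sather}, the analogue of \cite[Theorem 3.13]{AB}), but your parenthetical justification via a ``generalized Schanuel argument'' is slightly off as stated, since Schanuel's lemma in its generalized form compares two \emph{projective} resolutions, whereas here one must compare a projective resolution with a $\gc$-resolution. The correct mechanism is the pullback construction: from $0 \rightarrow K \rightarrow X \rightarrow N \rightarrow 0$ and $0 \rightarrow \Omega N \rightarrow P \rightarrow N \rightarrow 0$ with $P$ projective one obtains $0 \rightarrow \Omega N \rightarrow K \oplus P \rightarrow X \rightarrow 0$, and one then inducts using Proposition \ref{prop.item2.golod} again; alternatively, simply cite the fact. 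With that reference supplied, the proof is complete.
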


\begin{proposition} \label{prop.lema.1.9.gerko}
$($\cite[Proposition 6.1.10]{sather}$)$. Consider a short exact sequence of finite
$R$-modules of finite $\gcit$-dimension $0 \rightarrow M'
\rightarrow M \rightarrow M'' \rightarrow 0$. If $\gcdimit
(M)=0$ and $\gcdimit (M'')>0$, then $\gcdimit (M') =
\gcdimit (M'')-1$.
\end{proposition}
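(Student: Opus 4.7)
The plan is to combine the characterization of $\gcdim$ via Ext-vanishing given in Proposition \ref{prop.6.1.7.sather} with the long exact sequence of $\ext(-,C)$ applied to the given short exact sequence. Because $\gcdim(M)=0$ means $M$ is totally $C$-reflexive (Remark \ref{obs.carac.GCdim.zero}), the positive Ext groups of $M$ into $C$ all vanish, so the long exact sequence collapses into a dimension-shifting isomorphism that relates $\ext^{\bullet}(M',C)$ with $\ext^{\bullet+1}(M'',C)$, from which the claimed equality will fall out after reading off suprema.

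First I would record two preliminary observations: that $M'\neq 0$ (otherwise $M\cong M''$ would force $\gcdim(M'')=\gcdim(M)=0$, contradicting the hypothesis $\gcdim(M'')>0$), and that $\gcdim(M')<\infty$ by Proposition \ref{prop.item6.golod}; hence $M'$ satisfies the hypotheses of Proposition \ref{prop.6.1.7.sather}. Next I would use Remark \ref{obs.carac.GCdim.zero} together with Definition \ref{def.Creflexivo} to obtain $\ext^i(M,C)=0$ for every $i\geq 1$. Inserting this into
\[
\cdots \to \ext^i(M,C) \to \ext^i(M',C) \to \ext^{i+1}(M'',C) \to \ext^{i+1}(M,C) \to \cdots
\]
produces isomorphisms $\ext^i(M',C) \cong \ext^{i+1}(M'',C)$ for all $i\geq 1$.

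To conclude, I would set $n=\gcdim(M'')\geq 1$ and invoke Proposition \ref{prop.6.1.7.sather} on $M''$: it yields $\ext^n(M'',C)\neq 0$ and $\ext^j(M'',C)=0$ for $j>n$. Transporting this through the isomorphism above gives $\ext^j(M',C)=0$ for $j\geq n$ and, when $n\geq 2$, $\ext^{n-1}(M',C)\cong \ext^n(M'',C)\neq 0$. A second application of Proposition \ref{prop.6.1.7.sather}, now to the nonzero module $M'$ of finite $\gc$-dimension, delivers $\gcdim(M')=n-1$, as desired.

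The main obstacle, as far as I can see, is really just bookkeeping rather than a genuine difficulty: the boundary case $n=1$ must be treated with a bit of care, since there the dimension shift only produces vanishing (not nonvanishing) of the positive Ext groups on $M'$; nevertheless $\gcdim(M')=0$ is still forced because $M'\neq 0$ has finite $\gc$-dimension and that dimension is a non-negative integer, so Proposition \ref{prop.6.1.7.sather} compels the supremum to equal $0$. Apart from this minor endpoint check, the proof is a pure dimension-shift and I would not anticipate any further complication.
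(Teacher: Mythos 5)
Your proof is correct. The paper does not prove this proposition at all --- it is quoted verbatim from \cite[Proposition 6.1.10]{sather} --- so there is no in-paper argument to compare against; your dimension-shifting proof via the long exact sequence of $\ext(-,C)$ together with the supremum characterization of Proposition \ref{prop.6.1.7.sather} is the standard route, and you have handled the necessary side conditions ($M'\neq 0$, finiteness of $\gcdim(M')$ via Proposition \ref{prop.item6.golod}, and the endpoint case $n=1$) correctly.
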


The following theorem is the extension of the classical Auslander-Bridger formula
(see \cite[Theorem 4.13(b)]{AB}) to the $\textrm{G}_C$-dimension
setting. It is worth recalling that if the projective dimension of a module is finite then so is the $\textrm{G}_C$-dimension
and these numbers coincide (see \cite[Corollary 6.4.5]{sather}).

\begin{theorem}\label{teo.propriedades.Gcdimensao}
$($\cite[Theorem 4.4]{geng2013}$)$. Let $R$ be a local ring and let $M$ be a
non-zero finite $R$-module of finite $\textrm{\emph{G}}_C$-dimension.
Then
$$\gcdimit (M) \, = \, \textrm{\emph{depth}}(R) -
\textrm{\emph{depth}}_R(M).$$
\end{theorem}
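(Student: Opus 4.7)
My plan is to prove the formula by induction on $n = \gcdim(M)$, adapting the classical Auslander-Bridger argument to the $C$-relative setting via the preceding propositions.

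\emph{Base case $(n=0)$.} Here $M$ is totally $C$-reflexive and the goal is $\depth(M) = \depth(R)$. I would first record the standard identity $\depth_R(C) = \depth(R)$, which follows from the semidualizing conditions $\Hom_R(C,C) \cong R$ and $\ext^{>0}(C,C) = 0$ by checking that any $R$-regular sequence is $C$-regular and vice versa. Next, starting from a finite free resolution $\cdots \to F_1 \to F_0 \to M^C \to 0$ and dualizing against $C$, the vanishing $\ext^{>0}(M^C, C) = 0$ (Definition \ref{def.Creflexivo}) combined with $M \cong M^{CC}$ yields an exact coresolution
$$0 \to M \to C^{r_0} \to C^{r_1} \to \cdots .$$
Splitting off the short exact sequence $0 \to M \to C^{r_0} \to N \to 0$, Proposition \ref{prop.item2.golod} makes $N$ totally $C$-reflexive, so chasing depth via the depth lemma (in tandem with $\depth_R(C^{r_0}) = \depth(R)$) forces $\depth(M) = \depth(R)$.

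\emph{Inductive step $(n \geq 1)$.} I would take a short exact sequence $0 \to K \to F \to M \to 0$ with $F$ a finite free $R$-module. Proposition \ref{prop.item6.golod} ensures $\gcdim(K) < \infty$, and Proposition \ref{prop.lema.1.9.gerko} applied with $\gcdim(F) = 0$ and $\gcdim(M) = n > 0$ forces $\gcdim(K) = n - 1$. The induction hypothesis then gives $\depth(K) = \depth(R) - n + 1$, while $\depth(F) = \depth(R)$. For $n \geq 2$ we have $\depth(K) < \depth(F)$, and the depth lemma immediately yields $\depth(M) = \depth(K) - 1 = \depth(R) - n$.

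\emph{Main obstacle.} The subtle case is $n = 1$, where $\depth(K) = \depth(F) = \depth(R)$ and the depth lemma alone only delivers $\depth(M) \geq \depth(R) - 1$. To extract the matching upper bound, I would apply $\Hom_R(-, C)$ to the chosen short exact sequence to obtain
$$0 \to M^C \to F^C \to K^C \to \ext^1(M, C) \to 0$$
(using $\ext^1(F, C) = 0$). By Proposition \ref{prop.6.1.7.sather} the cokernel $\ext^1(M, C)$ is nonzero, and splitting the four-term sequence into two short exact ones, with $F^C$ and $K^C$ both of depth $\depth(R)$ (the latter since $K^C$ is again totally $C$-reflexive), lets one transfer this nonvanishing into the inequality $\depth(M) \leq \depth(R) - 1$. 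Coordinating this Ext-driven upper bound with the depth-lemma lower bound is what I expect to be the trickiest portion of the argument.
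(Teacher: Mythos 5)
The paper does not actually prove this statement --- it is imported verbatim from \cite[Theorem 4.4]{geng2013} --- so your proposal must stand on its own, and it has two genuine gaps, both concerning the upper bound $\depth(M)\le\textrm{depth}(R)-n$. In the base case $n=0$, the coresolution $0\to M\to C^{r_0}\to C^{r_1}\to\cdots$ is correctly constructed, and together with $\textrm{depth}_R(C)=\textrm{depth}(R)$ it does give $\depth(M)\ge\textrm{depth}(R)$ (iterate the depth lemma along the cosyzygies). But the depth lemma produces only lower bounds here: if one supposes $\depth(M)>\textrm{depth}(R)$, every inequality it yields for the cosyzygies (and for the syzygies of $M$) remains consistent, so ``chasing depth'' cannot force equality. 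The reverse inequality is the substantive half of the base case. The standard route is to reduce modulo a maximal $R$-sequence chosen simultaneously regular on $M$ and $M^C$ (legitimate thanks to the lower bound just obtained, but one must also check that total $C$-reflexivity and the semidualizing property descend to the quotient), and then, in the case $\textrm{depth}(R)=0$, to observe that
$\Hom_R(k,M)\cong\Hom_R\bigl(k,\Hom_R(M^C,C)\bigr)\cong\Hom_R(k\otimes_R M^C,C)\cong\Hom_R(k,C)^{\,b}$,
where $k$ is the residue field and $b\ge 1$ is the minimal number of generators of $M^C\neq 0$; this is nonzero because $\textrm{depth}_R(C)=0$, whence $\depth(M)=0$. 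None of this appears in your sketch.

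The second gap is the $n=1$ case, which you rightly flag as the crux but do not resolve. The dualized sequence $0\to M^C\to F^C\to K^C\to\ext^1(M,C)\to 0$ controls, at best, $\depth(M^C)$, and $\depth(M^C)$ carries no information about $\depth(M)$ in general --- indeed $M^C$ can be zero when $\gcdim(M)=1$ (take $M=R/(x)$ with $x$ a regular element and $C=R$). Moreover, even for $M^C$ the depth lemma again gives only lower bounds unless one already has an upper bound on $\depth(\ext^1(M,C))$, which you do not establish (and Proposition \ref{prop.4.16.stablemodule.generalizada}, which bounds its grade, is itself proved using the very theorem at issue, so it cannot be invoked). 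What is really needed is that a module of $\gc$-dimension exactly $1$ cannot have depth equal to $\textrm{depth}(R)$, and this again reduces to the depth-zero socle argument after passing modulo a regular sequence. By contrast, the inductive step for $n\ge 2$ is correct as written once the cases $n=0$ and $n=1$ are in place, since there $\depth(K)<\depth(F)$ and the rigid form of the depth lemma applies.
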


\begin{proposition} \label{prop.prop.1.3.gerko}
$($\cite[Proposition 1.3]{gerko2001}$)$. Let $R$ be a local ring. Then
$C$ is a dualizing module if and only if $\gcdimit (M)< \infty$
for every finite $R$-module $M$.
\end{proposition}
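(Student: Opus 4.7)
The plan is to argue the two implications separately; the reverse direction is short, while the forward direction is the substantive one.

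For $(\Leftarrow)$: I would apply the hypothesis to the residue field $k = R/\fm$. Proposition~\ref{prop.6.1.7.sather} yields $\extit^i(k, C) = 0$ for every $i > \gcdim(k)$, so the Bass numbers of $C$ vanish cofinally. By the standard criterion that a finite module over a Noetherian local ring has finite injective dimension as soon as its Bass numbers vanish in all large degrees, this forces $\operatorname{id}_R(C) < \infty$, and thus $C$ is dualizing.

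For $(\Rightarrow)$: Assume $\operatorname{id}_R(C) = d < \infty$. Let $M$ be a finite $R$-module, fix a projective resolution $\cdots \to P_1 \to P_0 \to M \to 0$, and write $N_k = \Omega^k M$. To conclude that $\gcdim(M) \leq d$, it suffices to prove that $N_d$ is totally $C$-reflexive. Dimension shift gives condition (i) of Definition~\ref{def.Creflexivo} immediately: for $k \geq d$ and $i \geq 1$ one has
\[
\extit^i(N_k, C) \,=\, \extit^{i+k}(M, C) \,=\, 0,
\]
since $i + k > d = \operatorname{id}_R(C)$. The remaining conditions --- bijectivity of $\sigma_{N_d}^C$ and vanishing of $\extit^i(N_d^C, C)$ for $i > 0$ --- form the core of the argument.

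To handle them, I would observe that for each $k \geq d$ the short exact sequence $0 \to N_{k+1} \to P_k \to N_k \to 0$ dualizes (thanks to the Ext vanishing above) to a short exact sequence $0 \to N_k^C \to P_k^C \to N_{k+1}^C \to 0$, where each $P_k^C$ is a finite direct sum of copies of $C$ and hence totally $C$-reflexive by Remark~\ref{obs.carac.GCdim.zero} together with the semidualizing axioms. Splicing these yields an exact ``$C$-coresolution'' $0 \to N_k^C \to P_k^C \to P_{k+1}^C \to \cdots$ of $N_k^C$ by copies of $C$, and applying $(-)^C$ to it (identifying $C^C \cong R$) recovers the original projective tail together with the biduality map $\sigma_{N_k}^C$. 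Combining this bookkeeping with iterated use of Proposition~\ref{prop.item2.golod} on the dualized sequences establishes total $C$-reflexivity of each $N_k$ with $k \geq d$, in particular of $N_d$; the truncation then becomes a $\gcit$-resolution of $M$ of length at most $d$, and $\gcdim(M) \leq d$ follows. The main obstacle is the apparent circularity in the tower: a naive induction on $k$ would require total $C$-reflexivity of smaller syzygies. It is broken by noting that for $k \geq d$ the dualized complex is \emph{simultaneously} exact in all sufficiently large indices, which is essentially the content of the Foxby equivalence between the Auslander and Bass classes of $C$.
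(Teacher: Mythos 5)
First, a point of comparison: the paper does not prove this statement at all --- it is quoted verbatim from Gerko \cite{gerko2001} --- so your attempt can only be measured against the standard argument. Your $(\Leftarrow)$ direction is correct and complete: finiteness of $\gcdim(R/\fm)$ together with Proposition~\ref{prop.6.1.7.sather} kills $\ext^i(R/\fm,C)$ in high degrees, and the classical characterization $\operatorname{id}_R(C)=\sup\{i \mid \ext^i(R/\fm,C)\neq 0\}$ for finite modules over a Noetherian local ring finishes it.

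The $(\Rightarrow)$ direction, however, has a genuine gap exactly where you announce ``the core of the argument.'' You correctly reduce to showing that $N_d=\Omega^d M$ satisfies (a) $\sigma^C_{N_d}$ bijective and (b) $\ext^i(N_d^C,C)=0$ for $i>0$, but neither is actually established. Iterating Proposition~\ref{prop.item2.golod} on the dualized sequences $0\to N_k^C\to P_k^C\to N_{k+1}^C\to 0$ cannot break the circularity you yourself identify: to conclude anything about $N_k^C$ it would require total $C$-reflexivity of $N_{k+1}^C$, and so on forever. The appeal to Foxby equivalence is also misplaced --- the functor at issue is $\Hom_R(-,C)$, not $\Hom_R(C,-)$ or $C\otimes_R-$, and when $C$ is dualizing over a non-Gorenstein ring it is false that every finite module lies in $\mathcal{A}_C$ (by Example~\ref{ex.classe.de.Auslander}(ii) that would force every module to have finite G-dimension over $R$). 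So as written the proof asserts its conclusion at the decisive step.

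The repair is cheap and uses an ingredient already on your table, applied a second time: since $\operatorname{id}_R(C)=d$, one has $\ext^i(X,C)=0$ for \emph{every} $R$-module $X$ once $i>d$, not only for $X=M$. From $0\to N_k^C\to P_k^C\to N_{k+1}^C\to 0$ and $\ext^{>0}(P_k^C,C)=0$ one gets $\ext^i(N_k^C,C)\cong\ext^{i+1}(N_{k+1}^C,C)$ for $i\geq 1$, hence $\ext^i(N_d^C,C)\cong\ext^{i+j}(N_{d+j}^C,C)=0$ as soon as $i+j>d$; this is (b). With (b) in hand the same long exact sequences become $0\to N_{k+1}^{CC}\to P_k\to N_k^{CC}\to 0$, and comparing with $0\to N_{k+1}\to P_k\to N_k\to 0$ via the natural biduality maps, the snake lemma shows that for every $k\geq d$ the map $\sigma^C_{N_k}$ is surjective and $\ker\sigma^C_{N_k}\cong\operatorname{coker}\sigma^C_{N_{k+1}}=0$, giving (a). For the record, the classical route behind Gerko's citation is different in flavor: the existence of a dualizing module forces $R$ to be Cohen--Macaulay with $C\cong\omega_R$ (Bass), and one then quotes canonical-module duality for maximal Cohen--Macaulay modules; your argument, once repaired as above, is essentially a self-contained rederivation of that duality.
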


\begin{proposition} \label{prop.obs6.1.9.sather}
$($\emph{\cite[Remark 6.1.9]{sather}}$)$. Let $0 \rightarrow M'
\rightarrow M \rightarrow M'' \rightarrow 0$ be a short exact sequence
of finite $R$-modules. Then we have the following inequalities:
\begin{itemize}
             \item[(i)] $\gcdimit (M') \leq \sup \{\gcdimit (M), \gcdimit (M'') \};$
             \item[(ii)] $\gcdimit (M) \leq \sup \{\gcdimit (M'), \gcdimit (M'') \}.$
\end{itemize}
\end{proposition}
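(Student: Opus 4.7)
The plan is to reduce both inequalities to the Ext-vanishing characterization of $\gcit$-dimension provided by Proposition \ref{prop.6.1.7.sather}, after first securing finiteness of $\gcit$-dimension via Proposition \ref{prop.item6.golod} and then analyzing the long exact sequence of $\extit^{\bullet}(-,\,C)$ induced by the given short exact sequence. The edge cases in which one of $M',M,M''$ is the zero module are trivial (in each, two of the three modules are isomorphic and the third is zero), so I would dispose of them at the outset and thereafter assume all three modules are non-zero.

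For part (i), I would set $n=\sup\{\gcdimit(M),\gcdimit(M'')\}$ and note that if $n=\infty$ the inequality is vacuous. So I assume $n<\infty$, which means both $M$ and $M''$ have finite $\gcit$-dimension; Proposition \ref{prop.item6.golod} then yields that $M'$ has finite $\gcit$-dimension as well, making Proposition \ref{prop.6.1.7.sather} applicable to $M'$. The long exact sequence
\[
\cdots \lar \extit^i(M,\,C) \lar \extit^i(M',\,C) \lar \extit^{i+1}(M'',\,C) \lar \cdots
\]
shows that for every $i>n$ the two flanking terms vanish (by Proposition \ref{prop.6.1.7.sather} applied to $M$ and $M''$), whence $\extit^i(M',\,C)=0$. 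Applying Proposition \ref{prop.6.1.7.sather} once more to $M'$ then gives $\gcdimit(M')\leq n$, as desired.

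For part (ii), the argument is entirely analogous: setting $n=\sup\{\gcdimit(M'),\gcdimit(M'')\}$ and assuming $n<\infty$, Proposition \ref{prop.item6.golod} guarantees that $M$ has finite $\gcit$-dimension. The relevant piece of the long exact sequence
\[
\cdots \lar \extit^i(M'',\,C) \lar \extit^i(M,\,C) \lar \extit^i(M',\,C) \lar \cdots
\]
forces $\extit^i(M,\,C)=0$ for $i>n$, and Proposition \ref{prop.6.1.7.sather} then yields $\gcdimit(M)\leq n$.

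There is no real obstacle here beyond bookkeeping: the entire argument is driven by the Ext-characterization of $\gcit$-dimension together with the stability of finite $\gcit$-dimension along short exact sequences. The only point requiring a little care is to ensure finiteness of $\gcit$-dimension on the left-hand side before invoking Proposition \ref{prop.6.1.7.sather}, which is precisely why Proposition \ref{prop.item6.golod} is applied first.
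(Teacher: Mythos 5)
Your argument is correct: the reduction to the Ext-vanishing characterization (Proposition \ref{prop.6.1.7.sather}), with finiteness secured first via Proposition \ref{prop.item6.golod} and the long exact sequence of $\extit^{\bullet}(-,C)$ doing the rest, is exactly the standard proof of this fact. The paper itself gives no proof here --- it simply cites \cite[Remark 6.1.9]{sather} --- and your write-up is a faithful reconstruction of that cited argument, including the correct handling of the zero-module edge cases needed before invoking Proposition \ref{prop.6.1.7.sather}.
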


We now consider the following general concept of Auslander transpose of a
finite module relative to a semidualizing module (see \cite{foxby72}, also
\cite{geng2013}).

\begin{definition}\label{def.TrC}\rm
Let $P_1 \xrightarrow{f} P_0 \rightarrow M \rightarrow 0$ be a
projective presentation of a finite $R$-module $M$. Then, applying
$\textrm{Hom}_{R}(-, C)$, we get the exact sequence
$$0 \rightarrow M^{C} \rightarrow P_0^{C} \xrightarrow{f^{C}}
P_1^{C} \rightarrow \textrm{Coker} (f^{C}) \rightarrow 0.$$ We
call $\textrm{Coker}(f^{C})$ a \emph{transpose of $M$ with
respect to $C$}, and denote it by $\textrm{Tr}_C(M)$. In the case where $C = R$, this notion coincides with the usual
\emph{Auslander transpose}, denoted by $\textrm{Tr}(M)$.
\end{definition}

Let $\textrm{add}_R(C)$ denote the category consisting of
all finite $R$-modules isomorphic to direct summands of finite direct
sums of copies of $C$. Two finite $R$-modules $M$ and $N$ are said to be
$\textrm{add}_R(C)$\emph{-equivalent}, which is denoted by $M \approx_C
N$, if there exist $X, Y$ in  $\textrm{add}_R(C)$ such that
$M \oplus X \cong N \oplus Y$.

\begin{remark}\label{obs.TrC}\rm
Let $M$ be a finite $R$-module. It is clear that the module $\textrm{Tr}_C(M)$ depends on the choice of the projective presentation of $M$, but it
is unique up to $\textrm{add}_R(C)$-equivalence (note that minimal projective presentations, which exist because $R$ is semiperfect, yield isomorphic transposes). As a consequence, $\textrm{Ext}_{R}^i(\textrm{Tr}_C(M), C)$ is unique up to isomorphism for any $i> 0$ (see \cite[Remark 2.4(1)]{geng2013}). It is also easy to see that $\textrm{G}_C\textrm{-dim}_R(\textrm{Tr}_C(M))$ is well-defined.
\end{remark}

\begin{proposition} \label{prop.lemma.2.12.paper5}
$($\emph{\cite[Lemma 2.12]{link2016}}$)$. For a finite $R$-module $M$, there
exists a short exact sequence
$$0 \rightarrow M \rightarrow
\textrm{\emph{Tr}}_C(\textrm{\emph{Tr}}_C(M)) \rightarrow X
\rightarrow 0$$ where $\textrm{\emph{G}}_C\textrm{\emph{-dim}}_R(X) = 0$.
\end{proposition}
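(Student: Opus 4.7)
The plan is to construct the desired map $M \to \textrm{Tr}_C\textrm{Tr}_C(M)$ by comparing two presentations of $\textrm{Tr}_C(M)$: the $\textrm{add}_R(C)$-presentation inherited from $M$ and a genuine projective presentation. Fix a projective presentation $P_{1} \xrightarrow{f} P_{0} \to M \to 0$, and apply $(-)^{C}$ to obtain the four-term exact sequence
\[ 0 \to M^{C} \to P_{0}^{C} \xrightarrow{f^{C}} P_{1}^{C} \to \textrm{Tr}_C(M) \to 0, \]
whose middle terms lie in $\textrm{add}_R(C)$, hence are totally $C$-reflexive. Also fix a projective presentation $Q_{1} \xrightarrow{g} Q_{0} \to \textrm{Tr}_C(M) \to 0$.

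Next, exploiting the projectivity of $Q_{0}$ and $Q_{1}$ together with exactness of $P_{0}^{C} \to P_{1}^{C} \to \textrm{Tr}_C(M) \to 0$, I would lift successively to get $\alpha_{0} \colon Q_{0} \to P_{1}^{C}$ and $\alpha_{1} \colon Q_{1} \to P_{0}^{C}$ forming a chain map that is the identity on $\textrm{Tr}_C(M)$. Applying $(-)^{C}$ to this diagram and using the canonical identifications $P_{i}^{CC} \cong P_{i}$ (which hold because projectives are totally $C$-reflexive) and $\textrm{Tr}_C\textrm{Tr}_C(M) = \textrm{Coker}(g^{C})$, I obtain two four-term exact sequences joined by vertical maps $\alpha_{0}^{C}, \alpha_{1}^{C}$ with identity on $(\textrm{Tr}_C M)^{C}$. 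Functoriality of cokernels then yields the induced morphism $\psi \colon M \to \textrm{Tr}_C\textrm{Tr}_C(M)$.

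To analyze $\psi$, I would split each four-term sequence into two short exact sequences (through $\Omega M = \textrm{Im}(f)$ on top and $\textrm{Im}(g^{C})$ on the bottom) and apply the snake lemma twice, or equivalently inspect the mapping cone of the dualized comparison map. The output is a long exact sequence expressing $\ker\psi$ and $\textrm{Coker}\,\psi$ in terms of the kernels and cokernels of $\alpha_{0}^{C}$ and $\alpha_{1}^{C}$, which are morphisms among the totally $C$-reflexive modules $P_{0}, P_{1}, Q_{0}^{C}, Q_{1}^{C}$.

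The main obstacle I expect lies in the last step: simultaneously establishing injectivity of $\psi$ and proving that $X = \textrm{Coker}\,\psi$ is totally $C$-reflexive. For injectivity, I would track the connecting homomorphism in the snake sequence and use that the relevant Ext groups involving $\textrm{add}_R(C)$-modules vanish. For the second claim, I would invoke Proposition \ref{prop.item2.golod} iteratively along the short exact sequences produced by the snake lemma, leveraging the Ext-acyclicity of $\textrm{add}_R(C)$-modules with respect to $\textrm{Hom}_{R}(-, C)$ to propagate the $\textrm{G}_{C}$-dimension zero property from $P_{0}, P_{1}, Q_{0}^{C}, Q_{1}^{C}$ to $X$.
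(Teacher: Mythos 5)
The paper offers no proof of this statement (it is quoted from \cite[Lemma 2.12]{link2016}), so your proposal has to stand on its own. Your construction of $\psi\colon M\to \textrm{Tr}_C\textrm{Tr}_C(M)$ is the right opening move: lifting the identity of $N:=\textrm{Tr}_C(M)$ from a projective presentation $Q_1\xrightarrow{g}Q_0\to N\to 0$ to the $\textrm{add}_R(C)$-presentation $P_0^C\xrightarrow{f^C}P_1^C\to N\to 0$ and dualizing does produce a commutative ladder from $0\to N^C\to P_1\to P_0\to M\to 0$ to $0\to N^C\to Q_0^C\to Q_1^C\to \textrm{Tr}_C(N)\to 0$ with the identity on $N^C$, hence an induced map $\psi$ on the right-hand cokernels.

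The genuine gap is exactly at the point you flag and defer. Splitting both rows and applying the snake lemma twice gives the six-term exact sequence
$$0\to\ker\alpha_0^C\to\ker\alpha_1^C\to\ker\psi\to\mathrm{coker}\,\alpha_0^C\to\mathrm{coker}\,\alpha_1^C\to\mathrm{coker}\,\psi\to 0,$$
and for an \emph{arbitrary} presentation $Q_\bullet$ and arbitrary liftings $\alpha_0,\alpha_1$ there is no control on any of the first four terms: $\alpha_0^C\colon P_1\to Q_0^C$ and $\alpha_1^C\colon P_0\to Q_1^C$ are essentially arbitrary maps between totally $C$-reflexive modules, and the vanishing of $\textrm{Ext}^{>0}_R(W,C)$ for $W\in\textrm{add}_R(C)$ does not kill any term of this sequence. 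So neither $\ker\psi=0$ nor total $C$-reflexivity of $\mathrm{coker}\,\psi$ follows from the chase as described. The missing idea is an \emph{adapted} choice of $Q_\bullet$: fix a projective presentation $F_1\to F_0\to C\to 0$ and, writing $P_i\cong R^{n_i}$ so $P_i^C\cong C^{n_i}$, take $Q_0=F_0^{n_1}\twoheadrightarrow P_1^C$ and $Q_1=F_0^{n_0}\oplus F_1^{n_1}$ via a horseshoe over $\ker(Q_0\to N)$. Then $\alpha_0,\alpha_1$ are surjective, and — the decisive point — the induced map $\ker\alpha_1\to\ker\alpha_0\cong(\Omega C)^{n_1}$ is also surjective, because the summand $F_1^{n_1}\subseteq\ker\alpha_1$ already maps onto $(\Omega C)^{n_1}$. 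With this in hand everything degenerates: $\alpha_i^C$ is injective with cokernel $(\ker\alpha_i)^C$ (using $\textrm{Ext}_R^1(P_i^C,C)=0$), the modules $\ker\alpha_i$ are totally $C$-reflexive (Proposition \ref{prop.item2.golod} applied to $0\to\Omega C\to F_0\to C\to 0$), the map $\mathrm{coker}\,\alpha_0^C\to\mathrm{coker}\,\alpha_1^C$ is the $C$-dual of a surjection of totally $C$-reflexive modules, hence injective with $\textrm{Ext}_R^1(-,C)$-trivial cokernel, and Proposition \ref{prop.item2.golod} yields $\textrm{G}_C\textrm{-dim}_R(\mathrm{coker}\,\psi)=0$. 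Note also that non-minimal presentations are unavoidable here: for $M=R$ with minimal presentations one gets $\textrm{Tr}_C\textrm{Tr}_C(R)=0$, and no such exact sequence exists. Without the adapted presentation, your final step does not close.
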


\begin{proposition} \label{prop.prop.2.6.geng}
$($\cite[Lemma 2.5 and Proposition 2.7]{geng2013}$)$. Let $M$ be a finite
$R$-module.
\begin{itemize}
             \item[(i)] There is an exact sequence
             $$0 \rightarrow \textrm{\emph{Ext}}_{R}^1(\textrm{\emph{Tr}}_C(M),\,C) \rightarrow M \xrightarrow{\sigma_M^C}M^{CC}
             \rightarrow \textrm{\emph{Ext}}_{R}^2(\textrm{\emph{Tr}}_C(M),\,C) \rightarrow 0;$$
             \item[(ii)] If $\textrm{\emph{G}}_C\textrm{\emph{-dim}}_R(M) = 0$
             then $\textrm{\emph{G}}_C\textrm{\emph{-dim}}_R(M^C) = 0$;
             \item[(iii)] $\textrm{\emph{G}}_C\textrm{\emph{-dim}}_R(M) = 0$ if and only if $\textrm{\emph{G}}_C\textrm{\emph{-dim}}_R(\textrm{\emph{Tr}}_C(M)) = 0$.
\end{itemize}
\end{proposition}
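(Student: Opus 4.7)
The plan is to prove the three statements in order, with (i) as the technical core and (ii), (iii) as applications that feed back into (i). Throughout I will use that every finite projective $R$-module $P$ is totally $C$-reflexive (hence $P^{CC}\cong P$ and $\extit^i(P,C)=\extit^i(P^C,C)=0$ for $i>0$), which follows from the defining properties of $C$ in Definition \ref{def.semidualizante}.

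For (i), I would start from a projective presentation $P_1\xrightarrow{f} P_0\to M\to 0$ and apply $(-)^C$ to split the resulting four-term sequence into two short exact sequences,
\[ 0\to M^C\to P_0^C\to L\to 0 \quad\text{and}\quad 0\to L\to P_1^C\to\textrm{Tr}_C(M)\to 0, \]
where $L=\textrm{Im}(f^C)$. Applying $(-)^C$ again and using $\extit^i(P_j^C,C)=0$ for $i>0$, the two long exact sequences become
\[ 0\to \textrm{Tr}_C(M)^C\to P_1^{CC}\to L^C\to \extit^1(\textrm{Tr}_C(M),C)\to 0 \]
and
\[ 0\to L^C\to P_0^{CC}\to M^{CC}\to \extit^1(L,C)\to 0, \]
together with the connecting isomorphism $\extit^1(L,C)\cong\extit^2(\textrm{Tr}_C(M),C)$. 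I would then splice these through $L^C$ and identify the composite $P_1^{CC}\to P_0^{CC}$ with $f$ via the biduality isomorphism of projectives; its cokernel is $M$ on one side and fits into a sequence whose image in $M^{CC}$ is $\textrm{Im}(\sigma_M^C)$ on the other. A diagram chase (using naturality of $\sigma^C$ against the presentation) shows that the induced map $M\to M^{CC}$ is exactly $\sigma_M^C$, with kernel $\extit^1(\textrm{Tr}_C(M),C)$ and cokernel $\extit^2(\textrm{Tr}_C(M),C)$.

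For (ii), assume $\gcdim(M)=0$. Then $M$ is totally $C$-reflexive, so $\extit^i(M^C,C)=0$ for $i>0$ by Definition \ref{def.Creflexivo}, and $\extit^i(M^{CC},C)=\extit^i(M,C)=0$ because $M^{CC}\cong M$. It remains to see that $M^C$ is $C$-reflexive, and here I would use the standard identity $(\sigma_M^C)^C\circ\sigma_{M^C}^C=\textrm{id}_{M^C}$: since $\sigma_M^C$ is an isomorphism, so is $(\sigma_M^C)^C$, and then $\sigma_{M^C}^C$ is forced to be one as well. By Remark \ref{obs.carac.GCdim.zero}, this yields $\gcdim(M^C)=0$.

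For (iii), the forward direction uses (i), (ii), and Proposition \ref{prop.item2.golod} iteratively. From the first short exact sequence above, $M^C$ and $P_0^C$ are totally $C$-reflexive (the former by (ii)), which forces $\extit^i(L,C)=0$ for $i>0$; dualizing then gives $0\to L^C\to P_0\to M\to 0$, and Proposition \ref{prop.item2.golod} makes $L^C$ totally $C$-reflexive, after which a five-lemma argument across $0\to M^C\to P_0^C\to L\to 0$ and its double dual shows $\sigma_L^C$ is an isomorphism, so $L$ itself is totally $C$-reflexive. Feeding this into the second short exact sequence and noting that $\extit^1(\textrm{Tr}_C(M),C)=0$ by part (i) (since $\sigma_M^C$ is injective), Proposition \ref{prop.item2.golod} yields $\gcdim(\textrm{Tr}_C(M))=0$. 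For the converse, apply the forward direction to $\textrm{Tr}_C(M)$, giving $\gcdim(\textrm{Tr}_C(\textrm{Tr}_C(M)))=0$; combined with Proposition \ref{prop.lemma.2.12.paper5}, the short exact sequence $0\to M\to \textrm{Tr}_C(\textrm{Tr}_C(M))\to X\to 0$ has two terms of $\gcit$-dimension zero, and Proposition \ref{prop.item2.golod} forces $\gcdim(M)=0$. The main obstacle I anticipate is the careful identification in (i) of the spliced map $M\to M^{CC}$ with $\sigma_M^C$; once this is in place, (ii) and (iii) follow essentially formally from Propositions \ref{prop.item2.golod} and \ref{prop.lemma.2.12.paper5}.
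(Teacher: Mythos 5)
This statement is quoted in the paper as a citation of Geng's work (\cite[Lemma 2.5 and Proposition 2.7]{geng2013}), so there is no in-paper proof to compare against; judged on its own, your argument is correct and is essentially the standard Auslander--Bridger argument transported to the relative setting. Part (i) via splicing the two dualized short exact sequences through $L^C$ and identifying the induced map with $\sigma_M^C$ by naturality is exactly right, as is the splitting-monomorphism identity $(\sigma_M^C)^C\circ\sigma_{M^C}^C=\mathrm{id}_{M^C}$ in part (ii). The one place to tighten is the claim in (iii) that total $C$-reflexivity of $M^C$ and $P_0^C$ ``forces'' $\textrm{Ext}_{R}^i(L,C)=0$ for all $i>0$: the long exact sequence only gives this directly for $i\geq 2$, while $\textrm{Ext}_{R}^1(L,C)$ is the cokernel of $P_0^{CC}\to M^{CC}$, whose vanishing needs the surjectivity of $\sigma_M^C$ (equivalently, your own identification $\textrm{Ext}_{R}^1(L,C)\cong\textrm{Ext}_{R}^2(\textrm{Tr}_C(M),C)=\mathrm{coker}\,\sigma_M^C$ from part (i)). Since $M$ is totally $C$-reflexive there, this is immediate, so the gap is purely expository. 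You could also shorten (iii) by invoking the second sentence of Proposition \ref{prop.item2.golod} directly on $0\to M^C\to P_0^C\to L\to 0$ once $\textrm{Ext}_{R}^1(L,C)=0$ is known, rather than passing through $L^C$ and a five-lemma; and the converse admits a self-contained route using only (i), (ii) and Proposition \ref{prop.item2.golod} (showing $M^C$ is totally $C$-reflexive as a second syzygy of $\textrm{Tr}_C(M)$ and then applying (ii) to $M\cong M^{CC}$), which avoids any worry about whether Proposition \ref{prop.lemma.2.12.paper5} logically depends on the present statement.
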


The proof of the following proposition is similar to the proof of
\cite[Remark 2.1(i)]{link2015}.

\begin{proposition} \label{lema.tr.e.trC}
Let $M$ be a finite $R$-module. Then, $\textrm{\emph{Tr}}_C(M)
\approx_C \textrm{\emph{Tr}}(M)\otimes_R C$. Moreover, if
$\textrm{\emph{Tr}}(M)$ and $\textrm{\emph{Tr}}_C(M)$ come from
the same projective presentation of $M$, then
$\textrm{\emph{Tr}}_C(M) \cong \textrm{\emph{Tr}}(M) \otimes_R C$.
\end{proposition}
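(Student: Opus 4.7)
The plan is to establish the displayed isomorphism first, from which the $\approx_C$-equivalence will follow essentially by shuffling projective summands.

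First I would fix a projective presentation $P_1 \xrightarrow{f} P_0 \rightarrow M \rightarrow 0$ and use it to compute both transposes simultaneously, so that by definition
\[
\textrm{Tr}(M) \;=\; \textrm{Coker}(f^{*})\quad\text{and}\quad \textrm{Tr}_C(M) \;=\; \textrm{Coker}(f^{C}).
\]
The key ingredient is the natural $R$-linear isomorphism $\tau_P\colon P^{*}\otimes_R C \xrightarrow{\,\cong\,} P^{C}$ sending $\varphi\otimes c \mapsto (x\mapsto \varphi(x)c)$, which is valid for every finite projective $R$-module $P$ (it holds for $P=R$ trivially, hence for $P=R^n$, and therefore for all finite projectives by the direct-summand formulation; note that the identification is compatible with the Hom/tensor evaluation).

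Next I would apply $-\otimes_R C$ to the right-exact sequence $P_0^{*}\xrightarrow{f^{*}} P_1^{*}\rightarrow \textrm{Tr}(M)\rightarrow 0$. Right-exactness of tensor product gives an exact sequence
\[
P_0^{*}\otimes_R C \xrightarrow{f^{*}\otimes \mathrm{id}_C} P_1^{*}\otimes_R C \rightarrow \textrm{Tr}(M)\otimes_R C \rightarrow 0,
\]
and the naturality of $\tau_{(-)}$ yields a commutative square identifying $f^{*}\otimes\mathrm{id}_C$ with $f^{C}$ under $\tau_{P_0}$ and $\tau_{P_1}$. Hence the two cokernels coincide, giving the desired isomorphism $\textrm{Tr}_C(M)\cong \textrm{Tr}(M)\otimes_R C$ when both transposes come from the same projective presentation. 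This proves the ``Moreover'' clause.

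For the general $\approx_C$-equivalence, I would pick any two projective presentations of $M$; the corresponding ordinary transposes $\textrm{Tr}(M)$ and $\textrm{Tr}'(M)$ differ by free summands, i.e.\ $\textrm{Tr}(M)\oplus R^{a}\cong \textrm{Tr}'(M)\oplus R^{b}$ for some $a,b\geq 0$ (a standard fact for Auslander transpose, analogous to what is recalled in Remark~\ref{obs.TrC}). Tensoring with $C$ produces an isomorphism
\[
(\textrm{Tr}(M)\otimes_R C)\oplus C^{a}\;\cong\;(\textrm{Tr}'(M)\otimes_R C)\oplus C^{b},
\]
so that $\textrm{Tr}(M)\otimes_R C \approx_C \textrm{Tr}'(M)\otimes_R C$. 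Applying the already-proved isomorphism with $\textrm{Tr}'(M)$ obtained from the presentation used to define $\textrm{Tr}_C(M)$, we conclude $\textrm{Tr}_C(M)\approx_C \textrm{Tr}(M)\otimes_R C$, as required. The only mildly delicate point is the naturality check that $\tau_{P_1}\circ(f^{*}\otimes\mathrm{id}_C) = f^{C}\circ\tau_{P_0}$; once this is in place the rest is routine.
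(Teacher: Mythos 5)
Your proof is correct and is essentially the argument the paper has in mind (it gives no details, deferring to the analogous proof of \cite[Remark 2.1(i)]{link2015}, which rests on exactly the natural identification $\Hom_R(P,R)\otimes_R C\cong\Hom_R(P,C)$ for finite projectives together with right-exactness of $-\otimes_R C$). One pedantic point: since $R$ is semiperfect but not assumed local, transposes from different presentations differ by \emph{projective} (not necessarily free) summands; this is harmless, as $P\otimes_R C\in\mathrm{add}_R(C)$ for any finite projective $P$, so the $\approx_C$-equivalence goes through unchanged.
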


If $\alpha \colon P \rightarrow M$ is an epimorphism, with $P$
a projective $R$-module, then the kernel of $\alpha$ is denoted $\Omega M$ and dubbed \emph{syzygy module} of $M$. By Schanuel's lemma, the projective equivalence class of $\Omega M$ is uniquely defined. Moreover, as $R$ is semiperfect, we may take the epimorphism $\alpha$ to be minimal (i.e., $P$ is a projective cover of $M$), hence $\Omega M$ is uniquely determined up to isomorphism. Now, as in \cite{link2004}, we consider the operator $$\lambda \, =  \, \Omega \textrm{Tr}$$ which, for any finite $R$-module $M$, gives the composite $\Omega \textrm{Tr}(M)$ defined from a minimal projective presentation $P_1\rightarrow P_0\rightarrow M\rightarrow  0$. This operator was, in fact, first introduced in \cite{AB}, with different notation.

\begin{definition}\label{def.LH} \rm
$($\cite[Definition 3]{link2004}$)$. Two finite $R$-modules $M$, $N$ are said
to be \emph{horizontally linked} if $M \cong \lambda N$ and $N
\cong \lambda M$. In case $M$ and $\lambda M$ are horizontally linked (i.e. $M \cong \lambda^2 M$), we simply say that the module $M$ is horizontally linked.
\end{definition}

Also recall that a finite module is called \emph{stable} if it has no non-zero projective direct summand. Here is a characterization of horizontally linked
modules in terms of stability and the kernel of the evaluation map $\sigma_M$.

\begin{theorem}\label{teo2.paper1}
$($\cite[Theorem 2]{link2004}$)$. A finite $R$-module $M$ is horizontally
linked if and only if $M$ is stable and ${\rm Ext}_{R}^1(\textrm{\emph{Tr}}(M), R) = 0$.
\end{theorem}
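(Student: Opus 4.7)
\emph{Plan.} The plan pivots on the Auslander exact sequence from Proposition \ref{prop.prop.2.6.geng}(i) specialized to $C=R$,
\[
0 \to \ext^1(\textrm{Tr}(M),R) \to M \xrightarrow{\sigma_M} M^{**} \to \ext^2(\textrm{Tr}(M),R) \to 0, \quad (\star)
\]
which reformulates the Ext-vanishing as injectivity of the biduality map $\sigma_M$. Throughout, I would fix a minimal projective presentation $P_1 \xrightarrow{f} P_0 \to M \to 0$ and dualize it, splitting the resulting 4-term exact sequence into the short exact sequences
\[
(A)\colon\; 0 \to M^* \to P_0^* \to \lambda M \to 0, \qquad (B)\colon\; 0 \to \lambda M \to P_1^* \to \textrm{Tr}(M) \to 0;
\]
minimality of $f$ places $f^*$ in $\textrm{rad}(\Hom(P_0^*,P_1^*))$, so $(B)$ realizes $P_1^* \to \textrm{Tr}(M)$ as a projective cover.

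For the forward direction, assuming $M \cong \lambda^2 M$, I would produce the Ext-vanishing from the defining short exact sequence $0 \to \lambda^2 M \to P \to \textrm{Tr}(\lambda M) \to 0$: the resulting embedding $M \hookrightarrow P$ together with the $R$-reflexivity of $P$ and naturality of biduality forces $\sigma_M$ to be a monomorphism (the composite $M \xrightarrow{\sigma_M} M^{**} \to P^{**} \cong P$ coincides with that embedding), and then $(\star)$ gives $\ext^1(\textrm{Tr}(M),R) = 0$. The stability of $M$ is handled by invoking the standard property that $\lambda^2$ produces stable modules in the semiperfect setting (the heart of which is that $\textrm{Tr}$, when computed from a minimal presentation, absorbs any projective direct summand).

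For the converse, suppose $M$ is stable and $\ext^1(\textrm{Tr}(M),R) = 0$. First, a direct check shows that stability of $M$ is equivalent to $M^* \subseteq \textrm{rad}(P_0^*)$, because every $\phi\colon M \to R$ has $\phi(M) \subseteq \textrm{rad}(R)$ if and only if $M$ has no direct summand isomorphic to $R$; this promotes $(A)$ itself to a projective cover of $\lambda M$. Pasting $(A)$ with a projective cover $Q \to M^*$ then assembles a minimal projective presentation $Q \to P_0^* \to \lambda M \to 0$, whose dual reads $0 \to (\lambda M)^* \to P_0 \to Q^* \to \textrm{Tr}(\lambda M) \to 0$; this exhibits
\[
\lambda^2 M \;=\; \Omega\,\textrm{Tr}(\lambda M) \;=\; \textrm{image}\bigl(P_0 \to Q^*\bigr) \;=\; P_0/(\lambda M)^*.
\]
Independently, dualizing $(A)$ alone gives $0 \to (\lambda M)^* \to P_0 \to M^{**} \to \ext^1(\lambda M,R) \to 0$, in which the displayed map into $M^{**}$ factors as $P_0 \twoheadrightarrow M \xrightarrow{\sigma_M} M^{**}$. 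Since $\sigma_M$ is injective by $(\star)$ and the Ext hypothesis, $(\lambda M)^* = \ker(P_0 \twoheadrightarrow M) = \Omega M$ as submodules of $P_0$, and consequently $\lambda^2 M \cong P_0/\Omega M \cong M$.

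The hardest point, I expect, is the bookkeeping inside the common ambient $P_0$: the two descriptions of $(\lambda M)^*$ — first as $\ker(P_0 \to Q^*)$ from the dualized minimal presentation of $\lambda M$, second as $\ker(P_0 \twoheadrightarrow M \xrightarrow{\sigma_M} M^{**})$ from dualizing $(A)$ directly — must be shown to agree as literal submodules of $P_0$ and not merely up to isomorphism, since only then does the concluding quotient $P_0/(\lambda M)^*$ canonically match $M$ along the original projective cover $P_0 \twoheadrightarrow M$.
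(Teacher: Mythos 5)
The paper does not prove this statement --- it is quoted verbatim from Martsinkovsky--Strooker --- so your proposal can only be compared with the standard argument. Your converse direction is correct and is essentially that argument: stability of $M$ gives $M^*\subseteq\mathrm{rad}(P_0^*)$, so $Q\to P_0^*\to\lambda M\to 0$ is a minimal presentation of $\lambda M$; its dual and the dual of $(A)$ share $P_0$ as ambient module, and since $M^{**}\to Q^*$ (the dual of the surjection $Q\to M^*$) is injective, the two kernels you worry about in your last paragraph literally coincide, both being $\ker\bigl(P_0\twoheadrightarrow M\xrightarrow{\sigma_M}M^{**}\bigr)$; injectivity of $\sigma_M$, i.e.\ the hypothesis $\ext^1(\textrm{Tr}(M),R)=0$ via $(\star)$, then identifies this kernel with $\Omega M$ and yields $\lambda^2M\cong P_0/\Omega M\cong M$. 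The $\ext$-vanishing half of the forward direction (embed $M\cong\lambda^2M$ into a projective, deduce $\sigma_M$ injective by naturality, apply $(\star)$) is also fine.

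The genuine gap is the stability of $M$ in the forward direction. The ``standard property that $\lambda^2$ produces stable modules'' is exactly Martsinkovsky--Strooker's Proposition~3 (cited elsewhere in this paper as an independent result); it is the nontrivial content you are supposed to supply, and the justification you offer --- that $\textrm{Tr}$ computed from a minimal presentation absorbs projective summands --- does not establish it. That observation only shows that $\lambda^2M$ depends on $\lambda M$ only through its stable part; it says nothing about whether the output is stable. Indeed, the same reasoning would ``prove'' that $\lambda N$ is always stable, which is false: over $R=k[[x]]$ one has $\lambda(R/(x))=\Omega\textrm{Tr}(R/(x))=(x)\cong R$, a nonzero free module. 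A clean repair uses the converse direction you have already proved: write $M=M_s\oplus Q$ with $Q$ projective and $M_s$ stable; since $M\cong\lambda^2M$ embeds in a projective, so does its summand $M_s$, hence $M_s$ is stable with $\ext^1(\textrm{Tr}(M_s),R)=0$, and your converse gives $\lambda^2M=\lambda^2M_s\cong M_s$. Then $M_s\oplus Q\cong M\cong M_s$ forces $Q=0$ by comparing minimal numbers of generators. (Alternatively, one can show directly that $\lambda S$ is stable whenever $S$ is stable and torsionless: a projective summand $Q'$ of $\lambda S$ splits off the projective cover $Q_0^*$, the condition $S^*=\Omega(\lambda S)\subseteq C_W$ forces the image of $Q'^*$ in $S$ to lie in $\ker\sigma_S=0$, hence $Q'^*\subseteq\Omega S\subseteq\mathrm{rad}(Q_0)$, and Nakayama kills $Q'$.) Without one of these arguments the forward implication is not proved.
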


Recall that the {\it grade} of a finite $R$-module $M$ is defined as $\textrm{grade}_R(M) = {\rm inf}\{i \geq 0 \mid \textrm{Ext}_{R}^i(M, R) \neq 0\}$ (for the trivial module we set $\textrm{grade}_R(0)=\infty$). This is an important basic invariant and gives a lower bound to the $\textrm{G}_C$-dimension.

\begin{lemma} \label{lema.grade.e.gradeC}
Let $M$ be a non-zero finite $R$-module. Then
$${\rm grade}_R(M) \, = \, {\rm inf} \{i \geq 0 \mid
{\rm Ext}_{R}^i(M,\,C) \neq 0\}.$$ In particular,
$\textrm{\emph{grade}}_R(M) \leq
\textrm{\emph{G}}_C\textrm{\emph{-dim}}_R(M)$.
\end{lemma}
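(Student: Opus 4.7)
The plan is to induct on $g = \textrm{grade}_R(M)$, leveraging the key fact that for a semidualizing module one has $\textrm{Ass}_R(C) = \textrm{Ass}_R(R)$. This identity follows from the isomorphism $R \cong \textrm{Hom}_R(C,C)$ together with the standard formula $\textrm{Ass}_R(\textrm{Hom}_R(C,C)) = \textrm{Supp}_R(C) \cap \textrm{Ass}_R(C)$ and the observation that $C$ is faithful (since $\textrm{Ann}_R(C) \subseteq \textrm{Ann}_R(\textrm{Hom}_R(C,C)) = 0$), so that $\textrm{Supp}_R(C) = \textrm{Spec}(R)$. Two consequences I would extract are: (i) an element $x \in R$ is $R$-regular if and only if it is $C$-regular (regularity being non-membership in an associated prime), and (ii) for such an $x$, the quotient $C/xC$ is semidualizing over $R/xR$ (a standard property transferring along regular quotients).

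For the base case $g = 0$, I would invoke the well-known characterization that, for finite $R$-modules $M,N$, $\textrm{Hom}_R(M,N) \neq 0$ if and only if $\textrm{Supp}_R(M) \cap \textrm{Ass}_R(N) \neq \emptyset$. Taking $N=R$ and $N=C$ and using $\textrm{Ass}_R(R)=\textrm{Ass}_R(C)$ gives $\textrm{Hom}_R(M,R) \neq 0 \Longleftrightarrow \textrm{Hom}_R(M,C) \neq 0$. Contrapositively, for $g \geq 1$ this also forces $\textrm{Hom}_R(M,C)=0$, so the right-hand infimum is at least $1$.

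For the inductive step ($g \geq 1$), pick an $R$-regular element $x \in \textrm{Ann}_R(M)$; by (i) it is also $C$-regular, and by (ii) $C/xC$ is semidualizing over $R/xR$. Viewing $M$ as an $R/xR$-module (valid since $xM = 0$), Rees' change-of-rings formula supplies the shifts
$$\textrm{Ext}_R^{i+1}(M,C) \;\cong\; \textrm{Ext}_{R/xR}^{i}(M,C/xC), \qquad \textrm{Ext}_R^{i+1}(M,R) \;\cong\; \textrm{Ext}_{R/xR}^{i}(M,R/xR)$$
for all $i \geq 0$, along with $\textrm{grade}_R(M) = \textrm{grade}_{R/xR}(M) + 1$. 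Combining with $\textrm{Hom}_R(M,C) = 0$, the inductive hypothesis applied to $M$ over $R/xR$ with semidualizing module $C/xC$ yields
$$\inf\{i \mid \textrm{Ext}_R^i(M,C) \neq 0\} \;=\; 1 + \textrm{grade}_{R/xR}(M) \;=\; g,$$
which is the desired equality. The ``In particular'' clause is then immediate, since Proposition~\ref{prop.6.1.7.sather} identifies $\gcdim(M)$ with $\sup\{i \mid \textrm{Ext}_R^i(M,C) \neq 0\}$ when finite (and the inequality is trivial otherwise), and this supremum evidently dominates the infimum just computed.

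The main delicate point I foresee lies in confirming the auxiliary facts (i), (ii) and the identity $\textrm{Ass}_R(R) = \textrm{Ass}_R(C)$ in the semiperfect (possibly non-local) framework of the paper, together with the fact that $R/xR$ remains semiperfect so that the inductive step may legitimately be carried out in the category under consideration; however, all of these are standard results in the theory of semidualizing modules and transfer without complication.
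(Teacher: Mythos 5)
Your argument is correct, and it rests on the same key insight as the paper's proof, namely that regular elements (and sequences) on $R$ and on $C$ coincide; but the execution is genuinely different. The paper disposes of the lemma in two steps by citation: it quotes the fact that ${\bf x}$ is an $R$-sequence if and only if it is a $C$-sequence, notes $\textrm{ann}_R(M)C \neq C$, and then invokes the classical identity $\textrm{grade}_R(\textrm{ann}_R(M), N) = \inf\{i \mid \textrm{Ext}_R^i(M,N) \neq 0\}$ (Bruns--Herzog, Proposition 1.2.10(e)) for $N = R$ and $N = C$. You instead re-derive everything from scratch: you prove $\textrm{Ass}_R(C) = \textrm{Ass}_R(R)$ via $R \cong \textrm{Hom}_R(C,C)$ and faithfulness of $C$, settle the grade-zero case with the $\textrm{Supp} \cap \textrm{Ass}$ criterion for non-vanishing of $\textrm{Hom}$, and then induct on the grade using Rees' change-of-rings isomorphism. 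This buys self-containedness at the cost of two extra inputs the paper's route does not need: that $C/xC$ remains semidualizing over $R/xR$ and that $R/xR$ remains in the class of rings under consideration (both true, and the semiperfect hypothesis is in fact irrelevant to this lemma). One should also note, as you implicitly do, that the induction terminates because $\textrm{grade}_R(M) = \textrm{grade}_R(\textrm{ann}_R(M), R)$ is finite for a non-zero finite module. Both proofs are sound; the paper's is shorter, yours makes the mechanism visible.
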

\begin{proof}
By \cite[Proposition 2.1.16(c)]{sather}, we have $\textrm{ann}_R(M)C \neq C$. Now, by \cite[Corollary 3.2]{geng2013} (see also \cite[Theorem 2.2.6(a)]{sather}), a sequence ${\bf x} \subset R$ is an $R$-sequence if and only if ${\bf x}$ is a $C$-sequence. Hence, using moreover \cite[Proposition 1.2.10(e)]{CMr}, we get $\textrm{grade}_R(M)  =  \textrm{grade}_R(\textrm{ann}_R(M), R) = \textrm{grade}_R(\textrm{ann}_R(M), C) = \inf \{i \geq 0 \mid \textrm{Ext}_{R}^i(M, C)\neq 0\}$. The last assertion then follows from Proposition \ref{prop.6.1.7.sather}. \qed
\end{proof}

\medskip

A finite $R$-module $M$ is called a
\emph{$C$-syzygy module} if there exists an exact sequence $0 \rightarrow M \rightarrow P \otimes_R C$, where $P$
is a finite projective $R$-module.

\begin{lemma}\label{obs.syzygy}\rm
Let $M$ be a finite $R$-module.
\begin{itemize}
  \item[(i)] If $M$ is an $R$-syzygy, then $M$ is a $C$-syzygy;

  \item[(ii)] $M$ is a $C$-syzygy if and only if
$\textrm{Ext}_{R}^1(\textrm{Tr}_C(M), C) = 0$.
\end{itemize}
\end{lemma}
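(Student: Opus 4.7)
Part (i) will follow quickly from the faithfulness of $C$ as an $R$-module. By Definition \ref{def.semidualizante}(i), any $r\in \textrm{Ann}_R(C)$ represents the zero homothety and hence is zero in $R\cong \textrm{Hom}_R(C,C)$, so $\textrm{Ann}_R(C)=0$. Fixing a generating set $c_1,\dots,c_k$ of $C$, the $R$-linear map $R\to C^k$ sending $r\mapsto (rc_1,\dots,rc_k)$ has kernel $\bigcap_i\textrm{Ann}_R(c_i)=\textrm{Ann}_R(C)=0$, exhibiting an embedding $R\hookrightarrow C^k\cong R^k\otimes_R C$. If $M$ is an $R$-syzygy, then $M\hookrightarrow P\hookrightarrow R^n$ for some finite projective $P$ and integer $n$; composing with the $n$-fold direct sum of the previous embedding yields $M\hookrightarrow R^{nk}\otimes_R C$, so $M$ is a $C$-syzygy.

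For part (ii), my strategy is to reduce the claim to the injectivity of $\sigma_M^C$ by invoking Proposition \ref{prop.prop.2.6.geng}(i), whose exact sequence identifies $\textrm{Ext}_R^1(\textrm{Tr}_C(M),C)$ with $\ker(\sigma_M^C)$. It therefore suffices to prove the equivalence: $M$ is a $C$-syzygy if and only if $\sigma_M^C$ is injective.

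For the forward direction, suppose $M\hookrightarrow T$ with $T=Q\otimes_R C$ and $Q$ finite projective. Then $T$ is a direct summand of some $C^n$, hence belongs to $\textrm{add}_R(C)$ and is totally $C$-reflexive (by Remark \ref{obs.carac.GCdim.zero}, together with the fact that $C^n$ is totally $C$-reflexive and that this property descends to direct summands). Consequently $\sigma_T^C$ is an isomorphism, and the naturality square of $\sigma^C$ applied to the inclusion $M\hookrightarrow T$ immediately forces $\sigma_M^C$ to be injective. For the converse, if $\sigma_M^C$ is injective, then $M\hookrightarrow M^{CC}$; picking any surjection $R^n\twoheadrightarrow M^C$ (possible since $M^C$ is finite) and applying the left-exact functor $(-)^C$ yields $M^{CC}\hookrightarrow (R^n)^C\cong R^n\otimes_R C$, so the composition displays $M$ as a $C$-syzygy.

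I do not anticipate serious obstacles. The only mildly delicate points are the faithfulness of $C$ in (i) and the routine identification $P^C\cong P\otimes_R C$ for finite projective $P$ (immediate for free modules and preserved under direct summands), both of which are standard.
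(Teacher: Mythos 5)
Your proof is correct. Part (ii) follows essentially the same route as the paper: both reduce the statement to the injectivity of $\sigma_M^C$ via the exact sequence of Proposition \ref{prop.prop.2.6.geng}(i), prove the forward implication by dualizing a free presentation of $M^C$, and prove the converse by placing $P\otimes_R C$ in $\textrm{add}_R(C)$ (hence totally $C$-reflexive) and chasing the naturality square of $\sigma^C$. Part (i), however, is genuinely different. The paper embeds the finite projective $P$ into a module of the form $F\otimes_R C$ by using the isomorphism $P\cong P^{CC}$ and applying $(-)^C$ to a free presentation of $P^C$; you instead observe that $C$ is faithful (since the homothety map $R\to\Hom_R(C,C)$ is injective) and build an explicit embedding $R\hookrightarrow C^k$ from a generating set, then compose $M\hookrightarrow P\hookrightarrow R^n\hookrightarrow C^{nk}\cong R^{nk}\otimes_R C$. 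Your argument is more elementary, avoiding $C$-reflexivity entirely and using only faithfulness; the paper's argument is slightly slicker in that it requires no choice of generators and produces the embedding functorially, a pattern it then reuses verbatim in part (ii). Both are complete and correct.
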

\begin{proof} (i) Suppose that $M$ embeds into a finite projective $R$-module $P\cong P^{CC}$. Now, choose an exact sequence $F\rightarrow P^C\rightarrow 0$, where $F$ is a finite free $R$-module. Applying $\textrm{Hom}_{R}(-, C)$, we obtain that $P^{CC}$ injects into $F^C\cong F\otimes_RC$ and hence so does $M$.

\medskip

\noindent (ii) Assume first that $\textrm{Ext}_{R}^1(\textrm{Tr}_C(M), C) = 0$. By Proposition \ref{prop.prop.2.6.geng}, $M$ injects into $M^{CC}$ via $\sigma^C_M$. Choose a surjection $F\rightarrow M^C\rightarrow 0$, where $F$ is a finite free $R$-module. Applying $\textrm{Hom}_{R}(-, C)$, we get that $M^{CC}$ injects into $F^C\cong F\otimes_RC$, hence so does $M$. Conversely, suppose that there is an injection $\iota \colon M\rightarrow P \otimes_R C$ for some finite projective $R$-module $P$. It induces a map ${\iota}^{CC} \colon M^{CC}\rightarrow (P\otimes_RC)^{CC}$. The module $P\otimes_RC$ lies in $\textrm{add}_R(C)$ by \cite[Theorem 3.1(1)]{geng-ding}, i.e. it is a direct summand of a finite direct sum of copies of $C$, which must have G$_C$-dimension zero. By Remark \ref{obs.carac.GCdim.zero}, we get $\textrm{G}_C\textrm{-dim}_R(P\otimes_RC) = 0$. In particular, the map $\sigma^C_{P\otimes_RC}$ is an isomorphism. By the natural commutative diagram
\newcommand{\f}{\operatorname{f}}
\begin{equation*}
\begin{CD}
M @>{\iota}>>P\otimes_RC\\
@VV{\sigma_M^C}V    @VV{\sigma_{P\otimes_RC}^C}V \\
M^{CC} @>{{\iota}^{CC}}>>(P\otimes_RC)^{CC}
\end{CD}
\end{equation*}
it follows that $\sigma^C_{M}$ is injective, as needed. \qed
\end{proof}

\section{Reduced $\textrm{G}_C$-perfect modules}\label{sec31}

In this section, we study reduced
$\textrm{G}_C$-perfection systematically. One of our main results provides a formula that relates
the $\textrm{G}_C$-dimension of a reduced $\textrm{G}_C$-perfect
$R$-module $M$ with the $\textrm{G}_C$-dimensions of $\textrm{Tr}_C(M)$ and $\textrm{Ext}_{R}^{\textrm{G}_C\textrm{-dim}_R(M)}(M,C)$. We conclude the section with some consequences involving
the operator $\lambda = \Omega \textrm{Tr}$. First, we need to provide some central notions.

As introduced in \cite[$\S$8]{Bass}, the {\it reduced grade} of a finite $R$-module $M$ is
$$\textrm{r.grade}_R(M) \, = \, \inf \{i > 0 \mid \textrm{Ext}_{R}^i(M,\,R) \neq
0\}.$$ The following relative notion, suggested in \cite{link2015}, extends this concept by taking into account a semidualizing module.

\begin{definition}\rm
The \emph{reduced grade} of a finite $R$-module $M$ with respect to $C$ is defined as
$$\textrm{r.grade}_R(M, C) \, = \, \inf \{i > 0 \mid \textrm{Ext}_{R}^i(M,\,C) \neq
0\}.$$
\end{definition}

\begin{remark}\label{obs.rgrade.com.respeito.C}\rm
Let $M, N$ be finite $R$-modules. The following properties hold:
\begin{itemize}
  \item[(i)] $\textrm{grade}_R(M) \leq \textrm{r.grade}_R(M, C)$, with equality if $\textrm{grade}_R(M) >0$.
  \item[(ii)] $\textrm{Ext}_{R}^i(M, C) = 0$ for all $i >0$ if and only if $\textrm{r.grade}_R(M, C) = \infty$.
  \item[(iii)] If $M$ has positive $\textrm{G}_C$-dimension, then $\textrm{r.grade}_R(M, C)\leq \textrm{G}_C\textrm{-dim}_R(M)$. Thus we have a chain of inequalities $$\textrm{grade}_R(M) \, \leq \, \textrm{r.grade}_R(M, C) \, \leq \, \textrm{G}_C\textrm{-dim}_R(M).$$
  \item[(iv)] If $M \approx_C N$, then $\textrm{r.grade}_R(M, C) = \textrm{r.grade}_R(N, C)$. In particular, the reduced grade of $\textrm{Tr}_C(M)$ with respect to $C$ is well-defined.
\end{itemize}
\end{remark}

By Remark \ref{obs.rgrade.com.respeito.C}(i) we immediately get that the number $\textrm{r.grade}_R(M, C)$ does not depend on $C$ in case $\textrm{grade}_R(M) >0$, but what if $\textrm{grade}_R(M) =0$\,? Below we provide classes of examples where the reduced grade turns out to depend on the choice of $C$. This, in particular, justifies the necessity of the presence of $C$ in the notation $\textrm{r.grade}_R(M, C)$, unlike what happens to the classical grade (see Lemma \ref{lema.grade.e.gradeC}).

\begin{example}\label{conductor}\rm Let $R$ be a 1-dimensional non-Gorenstein local domain possessing a canonical module $\omega_R$, and let $\overline{R}$ be the integral closure of $R$ in its fraction field $K$. Consider the ideal $\mathcal{I}=R :_R \overline{R}$, i.e.  the conductor of the extension $R\subset \overline{R}$. Since $R$ is not Gorenstein, \cite{Rego} yields $${\rm Ext}_R^1(\mathcal{I},\,R) \, \neq \, 0,$$ which gives ${\rm r.grade}_R(\mathcal{I}, R)={\rm r.grade}_R(\mathcal{I})=1$. Now, since $R$ is a 1-dimensional domain, the non-zero ideal $\mathcal{I}$ must be maximal Cohen-Macaulay as an $R$-module. Therefore, ${\rm Ext}_R^i(\mathcal{I}, \omega_R)  = 0$ for all $i>0$, which means ${\rm r.grade}_R(\mathcal{I}, \omega_R)=\infty$.

A similar class of examples over the same domain $R$ can be described as follows. Write $\mathcal{M}$ for the maximal ideal of $R$ and consider a non-zero finite submodule $$\mathcal{F} \, \subset \, \bigcup_{t\geq 0}\,(\mathcal{M}^t:_K\mathcal{M}^t),$$ 
i.e. $\mathcal{F}$ is just a fractional ideal
in the quadratic transform of $R$. By \cite[Corollary 3.2]{Ulrich}, we have ${\rm Ext}_R^1(\mathcal{F},\,R)  \neq  0$ and then ${\rm r.grade}_R(\mathcal{F}, R)=1$ while, as above, ${\rm r.grade}_R(\mathcal{F}, \omega_R)=\infty$.

In higher dimensions and reduced grade not necessarily equal to 1, a plentiful supply of examples can be produced by means of \cite[Theorem 3.1]{Ulrich}. It would be interesting to obtain an example where both (distinct) reduced grades are finite.

\end{example}

The next definition is crucial in this note.

\begin{definition}\label{def.rgrade.com.respeito.C}\rm
Let $M$ be a finite $R$-module with finite $\textrm{G}_C$-dimension. We
say that $M$ is \emph{reduced $\textrm{\emph{G}}_C$-perfect} if
$$\textrm{r.grade}_R(M, C) \, = \,
\textrm{G}_C\textrm{-dim}_R(M).$$
\end{definition}

This class of modules was introduced
in \cite{link2015}, as a generalization of the class of reduced G-perfect modules (see \cite{link2013}) which corresponds to the case $C=R$. Of course, every reduced $\textrm{G}_C$-perfect module has (finite) positive $\textrm{G}_C$-dimension.

It is worth mentioning a couple of notions that preceded the one recalled in Definition \ref{def.rgrade.com.respeito.C}. A finite $R$-module $M$ is said to be $\textrm{G}$\emph{-perfect}, or \emph{quasi-perfect}, if
$\textrm{grade}_R(M) = \textrm{G}\textrm{-dim}_R(M)$.
More generally, $M$ is $\textrm{G}_C$\emph{-perfect} if
$\textrm{grade}_R(M) = \textrm{G}_C\textrm{-dim}_R(M)$. We refer to \cite{foxby75} and \cite{golod84}.
Therefore, the concept of reduced $\textrm{G}_C$-perfection is an adaptation of $\textrm{G}_C$-perfection where $\textrm{grade}_R(M)$ is replaced with $\textrm{r.grade}_R(M, C)$, which makes sense by virtue of Remark \ref{obs.rgrade.com.respeito.C}(iii).

\begin{example} \label{ex1.GCperfeito.reduzido} \rm
Every $\textrm{G}_C$-perfect module of positive grade is reduced
$\textrm{G}_C$-perfect. Later on, in Example \ref{ex2.GCperfeito.reduzido}, we will show how to produce a family of reduced $\textrm{G}_C$-perfect
modules that are not $\textrm{G}_C$-perfect.
\end{example}

For an integer $n > 0$, we consider the composition
$$\mathcal{T}_{n}^C \, := \, \textrm{Tr}_C\Omega^{n-1},$$ i.e. to a given finite $R$-module $M$ it assigns the $R$-module $\textrm{Tr}_C(\Omega^{n-1}M)$.
Here, $\Omega^{n-1}$ is the $(n-1)$-th syzygy operator, which is defined recursively in the expected way: $\Omega^0M=M$, $\Omega^1M=\Omega M$, and $\Omega^{k}M=\Omega (\Omega^{k-1}M)$ for any $k\geq 2$, with all syzygy modules computed from a minimal projective resolution of $M$. In particular, $\mathcal{T}_{1}^C(M) = \textrm{Tr}_C(M)$.

Also note for completeness that if $P$ is a finite projective $R$-module, then ${\rm Tr}_C(P)$ is ${\rm add}_R(C)$-equivalent to zero; thus if for example $R$ is local, ${\rm Tr}_C(P)$ must be isomorphic to a direct sum of finitely many copies of $C$.

The following basic lemma will be useful throughout the paper.

\begin{lemma}\label{lema.chave.das.sequecias.exatas}
Let $M$ be a finite $R$-module and $n>0$ be an integer. Then, there
exist finite $R$-modules $L$ and $P$, with $P$ projective, fitting into short exact sequences $($for a fixed choice of transposes$)$
$$0 \rightarrow \textrm{\emph{Ext}}_{R}^n(M,\,C) \rightarrow
\mathcal{T}_{n}^C(M) \rightarrow L \rightarrow 0,$$
$$0 \rightarrow L \rightarrow \textrm{\emph{Tr}}_C(P) \rightarrow
\mathcal{T}_{n+1}^C(M) \rightarrow 0.$$ Moreover, if\,
$\textrm{\emph{r.grade}}_R(M, C) \geq n\geq 2$, there exists an
exact sequence $$0 \rightarrow \textrm{\emph{Tr}}_C(M)
\rightarrow P_2^{C} \rightarrow \cdots \rightarrow P_n^{C}
\rightarrow \mathcal{T}_{n}^C(M) \rightarrow 0,$$ where $P_i$ is
a finite projective $R$-module for all $i = 2, \ldots, n$.
\end{lemma}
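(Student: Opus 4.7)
The plan is to start from a fixed projective resolution of $M$ and extract all three sequences directly from its $C$-dual complex. I would take a projective resolution $\cdots \to P_2 \to P_1 \to P_0 \to M \to 0$ and write $K_j = \Omega^j M$; then for each $j \geq 1$, the induced presentation $P_j \to P_{j-1} \to K_{j-1} \to 0$ exhibits $\mathcal{T}_j^C(M) = \textrm{Tr}_C(K_{j-1})$ as the cokernel of the dual differential $d_j \colon P_{j-1}^C \to P_j^C$, while the cohomology of the complex $P_0^C \to P_1^C \to P_2^C \to \cdots$ at position $j \geq 1$ is precisely $\textrm{Ext}_R^j(M, C)$.

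For the first pair of sequences, I would set $L = \textrm{im}(d_{n+1}) \subseteq P_{n+1}^C$. The natural surjection $\mathcal{T}_n^C(M) = P_n^C/\textrm{im}(d_n) \twoheadrightarrow L$ induced by $d_{n+1}$ has kernel $\ker(d_{n+1})/\textrm{im}(d_n) = \textrm{Ext}_R^n(M, C)$, producing the first short exact sequence; the defining cokernel sequence $0 \to L \to P_{n+1}^C \to \mathcal{T}_{n+1}^C(M) \to 0$ then supplies the second, with the projective module $P := P_{n+1}$ (its $C$-dual $P^C$ plays the role of the module labeled $\textrm{Tr}_C(P)$ in the statement, which is legitimate because transposes are only well-defined up to $\textrm{add}_R(C)$-equivalence and $P^C$ lies in $\textrm{add}_R(C)$).

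For the long exact sequence, the assumption $\textrm{r.grade}_R(M, C) \geq n$ translates to $\textrm{Ext}_R^i(M, C) = 0$ for $1 \leq i \leq n-1$, so the dual complex is exact at each $P_i^C$ with $1 \leq i \leq n-1$. In particular $\textrm{Tr}_C(M) = P_1^C/\textrm{im}(d_1)$ injects into $P_2^C$ via $d_2$ (using $\textrm{Ext}_R^1(M, C) = 0$), and splicing this embedding with the exactness at the intermediate spots $P_3^C, \ldots, P_{n-1}^C$ and the canonical surjection $P_n^C \twoheadrightarrow \mathcal{T}_n^C(M)$ yields the claimed long exact sequence. The whole argument is essentially formal diagram-chasing; the only mild point to keep track of is the reconciliation between $\textrm{Tr}_C(P)$ and $P^C$ for projective $P$, which, as noted, is harmless up to $\textrm{add}_R(C)$-equivalence.
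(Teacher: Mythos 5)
Your argument is correct and follows essentially the same route as the paper: both proofs dualize a projective resolution of $M$, read off $\mathrm{Ext}_R^n(M,C)$ as the cohomology of the complex $P_\bullet^{C}$ at spot $n$ to obtain the two short exact sequences (the paper packages this via the six-term exact sequence of Dibaei--Sadeghi attached to $0\to\Omega^nM\to P\to\Omega^{n-1}M\to 0$, whereas you extract it directly from the dual differentials, which is the same computation), and use the vanishing $\mathrm{Ext}_R^i(M,C)=0$ for $0<i<n$ to splice the long exact sequence. The only points worth noting are cosmetic: take the resolution minimal so that the cokernels $\mathrm{coker}(d_j)$ are literally the modules $\mathcal{T}_j^C(M)$ as defined via minimal syzygies, and observe (as you do) that $P_{n+1}^{C}$ is a legitimate representative of $\mathrm{Tr}_C(P_{n+1})$, e.g.\ coming from the presentation $P_{n+1}\xrightarrow{0}P_{n+1}\to P_{n+1}\to 0$.
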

\begin{proof}
Choose a short exact sequence
\begin{equation}\label{eq1.lema.chave.das.sequecias.exatas}
0 \rightarrow \Omega^nM \xrightarrow{f} P \rightarrow
\Omega^{n-1}M \rightarrow 0,
\end{equation}
where $P$ is a finite projective $R$-module. By \cite[Lemma
2.2]{link2015}, there is an exact sequence
\begin{equation}\label{eq2.lema.chave.das.sequecias.exatas}
0 \rightarrow (\Omega^{n-1}M)^{C} \rightarrow P^{C}
\xrightarrow{f^{C}}(\Omega^{n}M)^{C} \rightarrow \textrm{Tr}_C(\Omega^{n-1}M) \xrightarrow{\psi} \textrm{Tr}_C(P) \rightarrow
\textrm{Tr}_C(\Omega^{n}M) \rightarrow 0.
\end{equation}
Note that $\textrm{Tr}_C(\Omega^{n-1}M) = \mathcal{T}_{n}^C(M)$
and $\textrm{Tr}_C(\Omega^{n}M) = \mathcal{T}_{n+1}^C(M)$. On
the other hand, using
(\ref{eq1.lema.chave.das.sequecias.exatas}) and the fact that $\textrm{Ext}_{R}^1(\Omega^{n-1}M, C) \cong \textrm{Ext}_{R}^n(M, C)$, we obtain an exact
sequence
\begin{equation}\label{eq3.lema.chave.das.sequecias.exatas}
0 \rightarrow (\Omega^{n-1}M)^{C} \rightarrow P^{C}
\xrightarrow{f^{C}}(\Omega^{n}M)^{C} \rightarrow
\textrm{Ext}_{R}^n(M,\,C) \rightarrow 0.
\end{equation}
Therefore, by (\ref{eq2.lema.chave.das.sequecias.exatas}) and
(\ref{eq3.lema.chave.das.sequecias.exatas}), we get
$\textrm{Ext}_{R}^n(M, C) \cong \textrm{Ker}(\psi)$. Thus, we
have an exact sequence $$0 \rightarrow \textrm{Ext}_{R}^n(M,\,C)
\rightarrow \mathcal{T}_{n}^C(M) \xrightarrow{\psi}\textrm{Tr}_C(P) \rightarrow \mathcal{T}_{n+1}^C(M) \rightarrow 0.$$
Taking $L:= \textrm{Im}(\psi)$, we obtain the two desired exact sequences.

Now assume that $\textrm{r.grade}_R(M, C) \geq n \geq 2$, i.e.
$\textrm{Ext}_{R}^i(M, C)=0$ for all $0 < i < n$. By Remark \ref{obs.rgrade.com.respeito.C}(iii), we have G$_C$-${\rm dim}(M)\geq n$. In particular, any resolution of $M$ by finite projective $R$-modules must have length at least $n$. Then we can pick a minimal projective resolution
\begin{equation}\label{eq4.lema.chave.das.sequecias.exatas}
\cdots \rightarrow P_{n} \xrightarrow{\varphi_{n}} P_{n-1}
\rightarrow \cdots \rightarrow P_{1} \xrightarrow{\varphi_{1}}
P_0 \rightarrow M \rightarrow 0
\end{equation}
which in particular yields projective
presentations
$$P_{1} \xrightarrow{\varphi_{1}} P_0 \rightarrow M \rightarrow 0
\textrm{ ~~ and ~~ } P_{n} \xrightarrow{\varphi_{n}} P_{n-1}
\rightarrow \Omega^{n-1}M \rightarrow 0$$ of $M$ and $\Omega^{n-1}M$, respectively. We
get exact sequences
\begin{equation}\label{eq5.lema.chave.das.sequecias.exatas} 0 \rightarrow M^{C} \rightarrow P_0^{C}
\xrightarrow{\varphi_{1}^{C}} P_1^{C} \rightarrow \textrm{Tr}_C(M) \rightarrow 0,
\end{equation}
\begin{equation}\label{eq6.lema.chave.das.sequecias.exatas} 0 \rightarrow (\Omega^{n-1}M)^{C} \rightarrow P_{n-1}^{C}
\xrightarrow{\varphi_{n}^{C}} P_n^{C} \rightarrow
\mathcal{T}_{n}^C(M) \rightarrow 0.
\end{equation}
Applying $\textrm{Hom}_R (-, C)$ to (\ref{eq4.lema.chave.das.sequecias.exatas}), we obtain
a complex
\begin{equation}\label{eq7.lema.chave.das.sequecias.exatas}
0 \rightarrow M^{C} \rightarrow P_{0}^{C}
\xrightarrow{\varphi_{1}^{C}} P_{1}^{C}
\xrightarrow{\varphi_{2}^{C}} P_{2}^{C}
\rightarrow \cdots
\rightarrow P_{n-1}^{C}
\xrightarrow{\varphi_{n}^{C}}
 P_{n}^{C} \rightarrow \cdots.
\end{equation}
which is exact at least up to $P_{n-1}^{C}$ because
$\textrm{Ext}_{R}^i(M, C)=0$ for all $0 < i < n$. From the exact sequences
(\ref{eq5.lema.chave.das.sequecias.exatas})  and
(\ref{eq6.lema.chave.das.sequecias.exatas}), we have $\textrm{Tr}_C(M) \cong \textrm{Im}(\varphi_{2}^{C})$ and
$\mathcal{T}_{n}^C(M) \cong \textrm{Coker}(\varphi_{n}^{C})$.
Now the assertion is obvious by
(\ref{eq7.lema.chave.das.sequecias.exatas}). \qed
\end{proof}

\begin{proposition} \label{prop.EXT.TrC}
Let $M$ be a reduced $\textrm{\emph{G}}_C$-perfect $R$-module of
$\textrm{\emph{G}}_C$-dimension $n$. Then,
$$\textrm{\emph{Ext}}_{R}^i(\textrm{\emph{Tr}}_C (M),\,C) \, \cong \,
\textrm{\emph{Ext}}_{R}^{i+n-1}(\textrm{\emph{Ext}}_{R}^n (M,\,C),\,C)$$ for all $i >0$.
\end{proposition}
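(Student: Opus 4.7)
The plan is to chain two isomorphisms obtained from Lemma~\ref{lema.chave.das.sequecias.exatas}, both by dimension-shifting in the long exact sequence of $\ext(-,C)$. Set $n=\gcdim(M)\geq 1$; reduced $\textrm{G}_C$-perfection means $\textrm{r.grade}_R(M,C)=n$, so $\ext^j(M,C)=0$ for $0<j<n$ and $\ext^n(M,C)\neq 0$ (the latter by Proposition~\ref{prop.6.1.7.sather}, since $\gcdim(M)=n$).

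For the first isomorphism, assume $n\geq 2$ and invoke the last assertion of Lemma~\ref{lema.chave.das.sequecias.exatas} to obtain the exact sequence
$$0 \,\longrightarrow\, \textrm{Tr}_C(M) \,\longrightarrow\, P_2^C \,\longrightarrow\, \cdots \,\longrightarrow\, P_n^C \,\longrightarrow\, \mathcal{T}_n^C(M) \,\longrightarrow\, 0,$$
with each $P_i$ finite projective. Each $P_i^C$ lies in $\textrm{add}_R(C)$ and so has $\gcdim = 0$, whence Proposition~\ref{prop.6.1.7.sather} gives $\ext^j(P_i^C,C)=0$ for every $j\geq 1$. Splicing the above into $n-1$ short exact sequences and iterating the long exact sequence of $\ext$ yields
$$\ext^i(\textrm{Tr}_C(M),C)\,\cong\,\ext^{i+n-1}(\mathcal{T}_n^C(M),C) \quad (i\geq 1);$$
for $n=1$ this reduces to the tautology $\mathcal{T}_1^C(M)=\textrm{Tr}_C(M)$.

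For the second isomorphism, apply the first two short exact sequences of Lemma~\ref{lema.chave.das.sequecias.exatas} (with the same $n$):
$$0 \to \ext^n(M,C) \to \mathcal{T}_n^C(M) \to L \to 0, \qquad 0 \to L \to \textrm{Tr}_C(P) \to \mathcal{T}_{n+1}^C(M) \to 0.$$
Iterated use of Proposition~\ref{prop.lema.1.9.gerko} on the syzygy short exact sequences $0\to\Omega^{k+1}M\to P_k\to\Omega^k M\to 0$ shows $\gcdim(\Omega^n M)=0$. Proposition~\ref{prop.prop.2.6.geng}(iii) then delivers $\gcdim(\mathcal{T}_{n+1}^C(M))=0$, while $\textrm{Tr}_C(P)\in\textrm{add}_R(C)$ also has $\gcdim = 0$. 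Hence Proposition~\ref{prop.6.1.7.sather} kills all $\ext^j(-,C)$ for $j\geq 1$ on both modules, and the long exact sequence attached to the second short exact sequence forces $\ext^j(L,C)=0$ for every $j\geq 1$. Feeding this into the long exact sequence attached to the first gives $\ext^j(\mathcal{T}_n^C(M),C)\cong\ext^j(\ext^n(M,C),C)$ for $j\geq 1$.

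Composing the two isomorphisms (with $j=i+n-1$) yields the asserted formula for all $i>0$. The main technical subtlety is the identification $\gcdim(\mathcal{T}_{n+1}^C(M))=0$, which is precisely what renders the auxiliary module $L$ invisible to higher $\ext(-,C)$ and reduces the problem to routine dimension-shifting.
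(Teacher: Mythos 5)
Your proposal is correct and follows essentially the same route as the paper's proof: both halves come from Lemma \ref{lema.chave.das.sequecias.exatas} (the two short exact sequences involving $L$ and $\mathcal{T}_{n+1}^C(M)$, and the long exact sequence ending in $\mathcal{T}_n^C(M)$ for $n\geq 2$), with the same dimension-shifting. The only cosmetic differences are the order of the two steps and that you kill $\textrm{Ext}_R^{j}(L,C)$ directly from the long exact sequence rather than via Proposition \ref{prop.item2.golod}.
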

\begin{proof}
By Lemma \ref{lema.chave.das.sequecias.exatas}, there are exact
sequences
\begin{equation}\label{4.4.3}
0 \rightarrow \textrm{Ext}_{R}^n(M,\,C) \rightarrow
\mathcal{T}_{n}^C (M) \rightarrow L \rightarrow 0,
\end{equation}
\begin{equation}\label{4.4.4}
0 \rightarrow L \rightarrow \textrm{Tr}_C (P) \rightarrow
\mathcal{T}_{n+1}^C (M) \rightarrow 0,
\end{equation}
for certain finite $R$-modules $L$ and $P$, with $P$ projective. As $\textrm{G}_C\textrm{-dim}_R(M)=n$, it follows
that $\textrm{G}_C\textrm{-dim}_R(\Omega^n M)=0$. Then, using
Proposition \ref{prop.prop.2.6.geng},
$$\textrm{G}_C\textrm{-dim}_R(\mathcal{T}_{n+1}^C (M)) \, = \,
\textrm{G}_C\textrm{-dim}_R(\textrm{Tr}_C (\Omega^n M)) \, = \, 0$$ and,
in addition, $\textrm{G}_C\textrm{-dim}_R(\textrm{Tr}_C (P))=0$. Hence, by
(\ref{4.4.4}) and Proposition \ref{prop.item2.golod}, $\textrm{G}_C\textrm{-dim}_R(L)=0$. In
particular, $\textrm{Ext}_{R}^i(L, C) = 0$ for all $i > 0$. On the other hand, from
(\ref{4.4.3}) we derive a long exact sequence
$$\cdots \rightarrow \textrm{Ext}_{R}^i(L,\,C) \rightarrow
\textrm{Ext}_{R}^i(\mathcal{T}_{n}^C (M),\,C) \rightarrow
\textrm{Ext}_{R}^{i}(\textrm{Ext}_{R}^n (M,\,C),\,C) \rightarrow
\textrm{Ext}_{R}^{i+1}(L,\,C) \rightarrow \cdots.$$ Therefore,
\begin{equation}\label{eq.iso.Exts}
\textrm{Ext}_{R}^i(\mathcal{T}_{n}^C (M),\,C) \, \cong \,
\textrm{Ext}_{R}^{i}(\textrm{Ext}_{R}^n(M,\,C),\,C), \textrm{ for
all } i > 0.
\end{equation} Notice that this proves the proposition in the case $n=1$.
Thus we may assume that $n\geq 2$. In this situation, Lemma \ref{lema.chave.das.sequecias.exatas} gives an exact sequence
$$0 \rightarrow \textrm{Tr}_C(M) \rightarrow G_{n-2} \rightarrow
\cdots \rightarrow G_0 \rightarrow \mathcal{T}_{n}^C(M)
\rightarrow 0,$$
where $G_j := P_{n-j}^{C}$ and $P_{n-j}$ is a finite projective $R$-module for $j =0, \ldots, n-2$. By Proposition \ref{prop.prop.2.6.geng},
each $G_{j}$ has $\textrm{G}_C$-dimension zero and hence
$\textrm{Ext}_{R}^i(G_j, C) = 0$  for all $i > 0$. Now define $K_0 = \mathcal{T}_{n}^C(M)$, $K_1 =
\textrm{Ker} \, (G_0 \rightarrow K_0)$, and (if $n\geq 3$)
$K_{j+1} = \textrm{Ker} \, (G_{j} \rightarrow G_{j-1})$ for all $1
\leq j \leq n-2$. Note that $K_{n-1} \cong \textrm{Tr}_C (M)$. For a fixed $j \in \{0, \ldots, n-2 \}$, we have a
short exact sequence
$$0 \rightarrow K_{j+1} \rightarrow G_{j} \rightarrow K_{j}
\rightarrow 0,$$ which induces a long exact sequence $$\cdots
\rightarrow \textrm{Ext}_{R}^i(G_{j},\,C) \rightarrow
\textrm{Ext}_{R}^{i} (K_{j+1},\,C) \rightarrow
\textrm{Ext}_{R}^{i+1} (K_{j},\,C) \rightarrow
\textrm{Ext}_{R}^{i+1} (G_{j},\,C) \rightarrow \cdots.$$ As
$\textrm{Ext}_{R}^i (G_{j}, C) = 0 \textrm{ for all } i > 0$, we
get $\textrm{Ext}_{R}^{i} (K_{j+1}, C) \cong \textrm{Ext}_{R}^{i+1}
(K_{j}, C)$ for all $i > 0$. Therefore, for each $i >
0$, we have isomorphisms
$$\textrm{Ext}_R^i(K_{n-1},\,C) \, \cong \, \textrm{Ext}_R^{i+1}(K_{n-2},\,C) \,
\cong \, \cdots \, \cong \,
\textrm{Ext}_R^{i+n-1}(K_0,\,C).$$ Thus, $\textrm{Ext}_R^i(\textrm{Tr}_C (M), C)  \cong 
\textrm{Ext}_R^{i+n-1}(\mathcal{T}_{n}^C (M), C)$ for
all $i > 0$. Using (\ref{eq.iso.Exts}), the result follows. \qed
\end{proof}

\medskip

Theorem \ref{teo.Gperf.redu.formula.da.GCdim.generalizado} below is the main result of this section. It establishes a formula connecting the
$\textrm{G}_C$-dimension of a given reduced $\textrm{G}_C$-perfect
$R$-module $M$ with the $\textrm{G}_C$-dimensions of $\textrm{Tr}_C
(M)$ and $\textrm{Ext}_{R}^{\textrm{G}_C\textrm{-dim}_R(M)}(M,C)$.

\begin{theorem}\label{teo.Gperf.redu.formula.da.GCdim.generalizado}
Let $M$ be a reduced $\textrm{\emph{G}}_C$-perfect $R$-module of
$\textrm{\emph{G}}_C$-dimension $n$. Then,
$$\textrm{\emph{G}}_C\textrm{\emph{-dim}}_R(M) + \textrm{\emph{G}}_C\textrm{\emph{-dim}}_R(\textrm{\emph{Tr}}_C (M)) \, = \, \textrm{\emph{G}}_C\textrm{\emph{-dim}}_R(\textrm{\emph{Ext}}_{R}^n (M,\,C)) + 1.$$
\end{theorem}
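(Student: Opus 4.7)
The plan is to combine the isomorphism of Ext modules from Proposition \ref{prop.EXT.TrC} with the sup-characterization of $\textrm{G}_C$-dimension in Proposition \ref{prop.6.1.7.sather}. Set $T := \textrm{Tr}_C(M)$ and $E := \textrm{Ext}_R^n(M, C)$; both are non-zero, since $E \neq 0$ by Proposition \ref{prop.6.1.7.sather} applied to $M$ and $T \neq 0$ because $M$ is not projective. The desired identity is equivalent to $\textrm{G}_C\textrm{-dim}_R(E) = \textrm{G}_C\textrm{-dim}_R(T) + n - 1$.

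First I would establish that $T$ and $E$ both have finite $\textrm{G}_C$-dimension. From the proof of Proposition \ref{prop.EXT.TrC}, the module $L$ appearing in Lemma \ref{lema.chave.das.sequecias.exatas} has $\textrm{G}_C$-dimension $0$, so the short exact sequence $0 \to E \to \mathcal{T}_n^C(M) \to L \to 0$ together with Proposition \ref{prop.item6.golod} identifies finiteness of $\textrm{G}_C\textrm{-dim}_R(E)$ with finiteness of $\textrm{G}_C\textrm{-dim}_R(\mathcal{T}_n^C(M))$. For $n \geq 2$, the long exact sequence $0 \to T \to P_2^C \to \cdots \to P_n^C \to \mathcal{T}_n^C(M) \to 0$ from Lemma \ref{lema.chave.das.sequecias.exatas}, split into short exact sequences and combined with iterated applications of Proposition \ref{prop.item6.golod}, analogously identifies finiteness of $\textrm{G}_C\textrm{-dim}_R(\mathcal{T}_n^C(M))$ with that of $\textrm{G}_C\textrm{-dim}_R(T)$; for $n = 1$ the same conclusion is immediate from the single sequence $0 \to E \to T \to L \to 0$. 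Finally, Proposition \ref{prop.lemma.2.12.paper5} applied to $M$ yields that $\textrm{Tr}_C(T) = \textrm{Tr}_C(\textrm{Tr}_C(M))$ has finite $\textrm{G}_C$-dimension, and coupling this with the preceding chain of equivalences (via a further application of Proposition \ref{prop.lemma.2.12.paper5} to $T$) forces finiteness of $\textrm{G}_C\textrm{-dim}_R(T)$.

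Next I would check positivity: Proposition \ref{prop.prop.2.6.geng}(iii) immediately gives $\textrm{G}_C\textrm{-dim}_R(T) \geq 1$ since $\textrm{G}_C\textrm{-dim}_R(M) = n \geq 1$, and were $\textrm{G}_C\textrm{-dim}_R(E) = 0$, Proposition \ref{prop.EXT.TrC} would yield $\textrm{Ext}_R^i(T, C) = 0$ for all $i > 0$, so Proposition \ref{prop.6.1.7.sather} would force $\textrm{G}_C\textrm{-dim}_R(T) = 0$, a contradiction. With finiteness and positivity in hand, Proposition \ref{prop.6.1.7.sather} applied to $T$ and the isomorphism of Proposition \ref{prop.EXT.TrC} give
\[
\textrm{G}_C\textrm{-dim}_R(T) \, = \, \sup\bigl\{i > 0 \mid \textrm{Ext}_R^i(T, C) \neq 0\bigr\} \, = \, \sup\bigl\{j > n-1 \mid \textrm{Ext}_R^j(E, C) \neq 0\bigr\} - (n-1).
\]
Combining this with the analogous formula $\textrm{G}_C\textrm{-dim}_R(E) = \sup\{j > 0 \mid \textrm{Ext}_R^j(E, C) \neq 0\}$ and ruling out the case $\textrm{G}_C\textrm{-dim}_R(E) < n$ (which would make the middle supremum empty and hence force $\textrm{G}_C\textrm{-dim}_R(T) = 0$, contradicting positivity) produces $\textrm{G}_C\textrm{-dim}_R(T) = \textrm{G}_C\textrm{-dim}_R(E) - n + 1$, equivalent to the stated identity.

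The principal obstacle is the finiteness step: Proposition \ref{prop.lemma.2.12.paper5} transports finiteness of $\textrm{G}_C$-dimension only between $N$ and $\textrm{Tr}_C(\textrm{Tr}_C(N))$, never directly between $N$ and $\textrm{Tr}_C(N)$, so the reduced $\textrm{G}_C$-perfection of $M$ must be invoked to route the argument through the chain of equivalences $T \leftrightarrow \mathcal{T}_n^C(M) \leftrightarrow E$ supplied by Lemma \ref{lema.chave.das.sequecias.exatas}.
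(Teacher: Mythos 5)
Your route is essentially the paper's: Proposition \ref{prop.EXT.TrC} combined with the sup-characterization of Proposition \ref{prop.6.1.7.sather}, with finiteness transported along the exact sequences of Lemma \ref{lema.chave.das.sequecias.exatas}. The positivity check, the exclusion of $\textrm{G}_C\textrm{-dim}_R(E)<n$, and the resulting identity $\textrm{G}_C\textrm{-dim}_R(T)=\textrm{G}_C\textrm{-dim}_R(E)-n+1$ are all correct and match the paper's computation. The genuine flaw is the final step of your finiteness paragraph. Proposition \ref{prop.lemma.2.12.paper5} applied to $T=\textrm{Tr}_C(M)$ produces a sequence $0\to T\to \textrm{Tr}_C(\textrm{Tr}_C(T))\to Y\to 0$ with $\textrm{G}_C\textrm{-dim}_R(Y)=0$, so to extract finiteness of $\textrm{G}_C\textrm{-dim}_R(T)$ via Proposition \ref{prop.item6.golod} you would need finiteness of $\textrm{G}_C\textrm{-dim}_R(\textrm{Tr}_C(\textrm{Tr}_C(T)))$; what you actually possess (from applying Proposition \ref{prop.lemma.2.12.paper5} to $M$) is finiteness of $\textrm{G}_C\textrm{-dim}_R(\textrm{Tr}_C(T))$. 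Bridging the two is exactly the unavailable assertion that finite $\textrm{G}_C$-dimension is preserved by $\textrm{Tr}_C$ --- the very thing you set out to prove. Nor should unconditional finiteness of $\textrm{G}_C\textrm{-dim}_R(\textrm{Tr}_C(M))$ be expected here: the paper's Corollary \ref{cor.Gcperf.redu.formula.do.depth.generalizado} explicitly adds the hypothesis $\textrm{G}_C\textrm{-dim}_R(M^C)<\infty$ precisely because it is equivalent to, and needed to secure, finiteness of $\textrm{G}_C\textrm{-dim}_R(\textrm{Tr}_C(M))$.

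The repair is already contained in your own argument and requires no new input. The chain of equivalences $T\leftrightarrow\mathcal{T}_{n}^{C}(M)\leftrightarrow E$ that you set up shows $\textrm{G}_C\textrm{-dim}_R(T)<\infty$ if and only if $\textrm{G}_C\textrm{-dim}_R(E)<\infty$. If both are infinite, both sides of the asserted identity equal $\infty$ and there is nothing left to prove; if both are finite, your sup computation goes through verbatim. This two-case handling is exactly what the paper does, so with that one sentence replaced your proof coincides with the published one.
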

\begin{proof} Let us begin with the case $n=1$. Recall that $\mathcal{T}_{1}^C(M) = \textrm{Tr}_C(M)$. Lemma \ref{lema.chave.das.sequecias.exatas} then gives a short exact sequence $0 \rightarrow \textrm{Ext}_{R}^1(M,\,C) \rightarrow
\textrm{Tr}_C(M) \rightarrow L_1 \rightarrow 0$ for a suitable finite $R$-module $L_1$ which must satisfy $\textrm{G}_C\textrm{-dim}_R(L_1) = 0$ (as seen in the proof of Proposition \ref{prop.EXT.TrC}). By Proposition \ref{prop.item6.golod}, we get that $\textrm{G}_C\textrm{-dim}_R(\textrm{Tr}_C (M)) < \infty$ if and only if $\textrm{G}_C\textrm{-dim}_R(\textrm{Ext}_{R}^1(M, C)) <
\infty$, and in this case Proposition \ref{prop.obs6.1.9.sather} ensures that these dimensions must coincide. This solves the case $n=1$.

Now suppose $n\geq 2$. According to Lemma \ref{lema.chave.das.sequecias.exatas}, there are exact
sequences
\begin{equation}\label{eq.finitude00}
0 \rightarrow \textrm{Tr}_C (M) \rightarrow G_{n-2} \rightarrow
\cdots \rightarrow G_0 \rightarrow \mathcal{T}_{n}^C (M)
\rightarrow 0,
\end{equation}
\begin{equation}\label{eq.finitude0}
0 \rightarrow \textrm{Ext}_{R}^n (M,\,C) \rightarrow
\mathcal{T}_{n}^C (M) \rightarrow L \rightarrow 0,
\end{equation}
where $\textrm{G}_C\textrm{-dim}_R(G_j)=0$ for all $j =0, \ldots,
n-2$, and moreover $\textrm{G}_C\textrm{-dim}_R(L) = 0$ by the proof of Proposition \ref{prop.EXT.TrC}.
Breaking (\ref{eq.finitude00}) into short exact sequences if necessary (i.e. if $n\geq 3$),
Proposition \ref{prop.item6.golod} gives $\textrm{G}_C\textrm{-dim}_R (\textrm{Tr}_C(M)) < \infty
\Leftrightarrow \textrm{G}_C\textrm{-dim}_R(\mathcal{T}_{n}^C(M)) < \infty$, and moreover by (\ref{eq.finitude0}) we get
$\textrm{G}_C\textrm{-dim}_R(\mathcal{T}_{n}^C (M)) < \infty
\Leftrightarrow \textrm{G}_C\textrm{-dim}_R(\textrm{Ext}_{R}^n(M, C)) < \infty$. It follows that
$$\textrm{G}_C\textrm{-dim}_R(\textrm{Tr}_C (M)) < \infty \, \Leftrightarrow \,
\textrm{G}_C\textrm{-dim}_R(\textrm{Ext}_{R}^n (M, C)) <
\infty.$$ Hence we can assume that these dimensions are finite. By Proposition \ref{prop.6.1.7.sather},
\begin{equation}\label{eq.formula.Gcdim1}
\textrm{G}_C\textrm{-dim}_R(\textrm{Ext}_{R}^n (M,\,C)) \, = \, \sup
\{ j \geq 0 \mid \textrm{Ext}_{R}^j(\textrm{Ext}_{R}^n (M,\,C),\,C)
\neq 0 \},
\end{equation}
\begin{equation}\label{eq.formula.Gcdim2}
\textrm{G}_C\textrm{-dim}_R(\textrm{Tr}_C (M)) \, = \, \sup \{ j
\geq 0 \mid \textrm{Ext}_{R}^j (\textrm{Tr}_C (M),\,C) \neq 0 \}.
\end{equation}
For any given $j>n-1$, we can apply Proposition \ref{prop.EXT.TrC} with $i=j-n+1$ in order to get
\begin{equation}\label{eq.iso.Exts2}
\textrm{Ext}_{R}^{j}(\textrm{Ext}_{R}^n (M,\,C),\,C) \, \cong \,
\textrm{Ext}_{R}^{j-n+1}(\textrm{Tr}_C (M),\,C).
\end{equation}
Now set $t = \textrm{G}_C\textrm{-dim}_R(\textrm{Tr}_C (M))$, which must be strictly positive because otherwise Proposition \ref{prop.prop.2.6.geng} would force
$n=0$, a contradiction. Taking $j = n+t-1 > n-1$ in (\ref{eq.iso.Exts2}), we obtain
$\textrm{Ext}_{R}^{n+t-1}(\textrm{Ext}_{R}^n (M, C), C)  \cong 
\textrm{Ext}_{R}^{t}(\textrm{Tr}_C(M), C)  \neq  0$, where the non-vanishing follows by
(\ref{eq.formula.Gcdim2}). For $j > n+t-1$, we have $j-n+1>t$, so (\ref{eq.formula.Gcdim2}) gives
$\textrm{Ext}_{R}^{j-n+1}(\textrm{Tr}_C (M), C) = 0$, which by (\ref{eq.iso.Exts2}) implies
$\textrm{Ext}_{R}^{j} (\textrm{Ext}_{R}^n(M, C), C)=0$. By (\ref{eq.formula.Gcdim1}), we get $\textrm{G}_C\textrm{-dim}_R(\textrm{Ext}_{R}^n(M, C))=n+t-1$, as needed. \qed
\end{proof}

\begin{corollary}\label{cor.Gcperf.redu.formula.do.depth.generalizado}
Let $R$ be a local ring and $M$ be a reduced
$\textrm{\emph{G}}_C$-perfect $R$-module of
$\textrm{\emph{G}}_C$-dimension $n$. Assume that
$\textrm{\emph{G}}_C\textrm{\emph{-dim}}_R(M^C) < \infty$. Then
$$\textrm{\emph{depth}}_R(M) + \textrm{\emph{depth}}_R(\textrm{\emph{Tr}}_C (M)) \, = \, \textrm{\emph{depth}}(R) +
\textrm{\emph{depth}}_R(\textrm{\emph{Ext}}_{R}^n (M,\,C)) - 1.$$
\end{corollary}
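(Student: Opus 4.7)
The plan is to deduce this corollary from Theorem \ref{teo.Gperf.redu.formula.da.GCdim.generalizado} by converting the three $\textrm{G}_C$-dimensions appearing in that formula into depths via the Auslander--Bridger formula (Theorem \ref{teo.propriedades.Gcdimensao}). To do this I need to verify, for each of the modules $M$, $\textrm{Tr}_C(M)$, and $\textrm{Ext}_R^n(M,C)$, both non-vanishing and finiteness of $\textrm{G}_C$-dimension, which are the standing hypotheses of Theorem \ref{teo.propriedades.Gcdimensao}.

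Non-vanishing is the easy part. The module $M$ is non-zero because its $\textrm{G}_C$-dimension is $n>0$, and $\textrm{Ext}_R^n(M,C)\neq 0$ is immediate from Proposition \ref{prop.6.1.7.sather}. For $\textrm{Tr}_C(M)$, if it were zero then it would have $\textrm{G}_C$-dimension zero, and Proposition \ref{prop.prop.2.6.geng}(iii) would force $\textrm{G}_C\textrm{-dim}_R(M)=0$, contradicting $n>0$.

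For finiteness of $\textrm{G}_C$-dimension, $M$ is given. For $\textrm{Tr}_C(M)$, I would use the four-term exact sequence
$$0 \rightarrow M^C \rightarrow P_0^C \rightarrow P_1^C \rightarrow \textrm{Tr}_C(M) \rightarrow 0$$
coming from a projective presentation $P_1\to P_0\to M\to 0$. Since $P_0^C, P_1^C\in\textrm{add}_R(C)$ have $\textrm{G}_C$-dimension zero and $M^C$ has finite $\textrm{G}_C$-dimension by hypothesis, splitting this sequence into two short exact sequences and applying Proposition \ref{prop.item6.golod} twice yields $\textrm{G}_C\textrm{-dim}_R(\textrm{Tr}_C(M))<\infty$. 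The finiteness of $\textrm{G}_C\textrm{-dim}_R(\textrm{Ext}_R^n(M,C))$ then follows from the equivalence established inside the proof of Theorem \ref{teo.Gperf.redu.formula.da.GCdim.generalizado}.

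With the hypotheses in place, I would apply Theorem \ref{teo.propriedades.Gcdimensao} to each of $M$, $\textrm{Tr}_C(M)$, and $\textrm{Ext}_R^n(M,C)$ to obtain three identities of the shape $\textrm{G}_C\textrm{-dim}_R(-)=\textrm{depth}(R)-\textrm{depth}_R(-)$, substitute them into the equality $\textrm{G}_C\textrm{-dim}_R(M)+\textrm{G}_C\textrm{-dim}_R(\textrm{Tr}_C(M))=\textrm{G}_C\textrm{-dim}_R(\textrm{Ext}_R^n(M,C))+1$ furnished by Theorem \ref{teo.Gperf.redu.formula.da.GCdim.generalizado}, and rearrange. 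The only conceptually delicate point is showing that $\textrm{Tr}_C(M)$ is both non-zero and of finite $\textrm{G}_C$-dimension; once this is settled, the rest is a routine substitution, so I do not expect any serious obstacle.
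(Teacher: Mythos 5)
Your proposal is correct and follows essentially the same route as the paper: the same four-term exact sequence $0\to M^C\to P_0^C\to P_1^C\to \textrm{Tr}_C(M)\to 0$ together with Proposition \ref{prop.item6.golod} to transfer finiteness of $\textrm{G}_C$-dimension from $M^C$ to $\textrm{Tr}_C(M)$, then Theorem \ref{teo.Gperf.redu.formula.da.GCdim.generalizado} for finiteness of $\textrm{G}_C\textrm{-dim}_R(\textrm{Ext}_R^n(M,C))$, and finally substitution of the Auslander--Bridger-type formula (Theorem \ref{teo.propriedades.Gcdimensao}) into the dimension equality. Your explicit verification that the three modules are non-zero is a small point the paper leaves implicit, but it does not change the argument.
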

\begin{proof} Applying ${\rm Hom}_R(-, C)$ to a projective presentation $P_1\rightarrow P_0\rightarrow M\rightarrow 0$, we get an exact sequence
$0\rightarrow M^C\rightarrow P_0^C\rightarrow P_1^C\rightarrow \textrm{Tr}_C(M)\rightarrow0$, which by Proposition \ref{prop.item6.golod} implies that
$\textrm{G}_C\textrm{-dim}_R(\textrm{Tr}_C(M))< \infty$ if (and only if) $\textrm{G}_C\textrm{-dim}_R(M^C)< \infty$.
It follows by Theorem
\ref{teo.Gperf.redu.formula.da.GCdim.generalizado} that $\textrm{G}_C\textrm{-dim}_R(\textrm{Ext}_{R}^n (M, C)) < \infty$. Now set $r =\textrm{depth}(R)$. By Theorem
\ref{teo.propriedades.Gcdimensao} and Theorem
\ref{teo.Gperf.redu.formula.da.GCdim.generalizado}, we can write
\begin{eqnarray*}
\textrm{depth}_R(M)+\textrm{depth}_R(\textrm{Tr}_C (M)) & = & (r - \textrm{G}_C\textrm{-dim}_R(M))+ (r - \textrm{G}_C\textrm{-dim}_R(\textrm{Tr}_C (M))) \\
                                                     & = & 2r -(\textrm{G}_C\textrm{-dim}_R( M)+ \textrm{G}_C\textrm{-dim}_R(\textrm{Tr}_C (M))) \\
                                                     & = &  2r -\textrm{G}_C\textrm{-dim}_R (\textrm{Ext}_{R}^n (M,\,C))-1\\
                                                     & = & r + \textrm{depth}_R (\textrm{Ext}_{R}^n (M,\,C)) -1.
\end{eqnarray*}\qed
\end{proof}

\begin{remark}\label{obs.2.11.sadeghi}\rm Let $M$ be a finite $R$-module. Under an appropriate condition, we can relate the cohomology of $\textrm{{Tr}}_C(M)$
to that of $\lambda M \otimes_R C$. First, by the definition of the operator $\lambda$,
there is an exact sequence $0 \rightarrow \lambda M \rightarrow P\rightarrow \textrm{Tr}(M) \rightarrow 0$ for a suitable finite projective $R$-module $P$ and a fixed choice of transpose. By Proposition
\ref{lema.tr.e.trC}, $\textrm{Tr}(M) \otimes_R C \cong
\textrm{Tr}_C(M)$. Consequently, if we assume that $\textrm{{Tor}}^{R}_1
(\textrm{{Tr}}(M), C) = 0$, we have a short exact sequence
\begin{equation}\label{eq.seg.exata.lambidaM.e.TrC}
0 \rightarrow \lambda M \otimes_R C \rightarrow P \otimes_R C
\rightarrow \textrm{{Tr}}_C(M) \rightarrow 0.
\end{equation}
Now recall that
$\textrm{Ext}_{R}^i(P \otimes_R C, C) = 0$ for all $i > 0$, because
$\textrm{G}_C\textrm{-dim}_R(P\otimes_RC) = 0$ as seen in the proof of Lemma \ref{obs.syzygy}(ii).
Hence, by (\ref{eq.seg.exata.lambidaM.e.TrC}), we get isomorphisms
$$\textrm{{Ext}}_{R}^i(\lambda M \otimes_R C,\,C) \, \cong \,
\textrm{{Ext}}_{R}^{i+1}(\textrm{{Tr}}_C(M),\,C),$$ for all $i > 0.$
\end{remark}

The following result is a generalization of \cite[Proposition
3.5(i)]{link2013}.

\begin{corollary}\label{cor.EXT.Lambda}
Let $M$ be a reduced $\textrm{\emph{G}}_C$-perfect $R$-module of $\textrm{\emph{G}}_C$-dimension $n$. Assume that
$\textrm{\emph{Tor}}^{R}_1 (\textrm{\emph{Tr}}(M), C) = 0$. Then
$$\textrm{\emph{Ext}}_{R}^i(\lambda M \otimes_R C,\,C) \, \cong \,
\textrm{\emph{Ext}}_{R}^{i+n}(\textrm{\emph{Ext}}_{R}^n(M,\,C),\,C),$$ for all $i
>0$.
\end{corollary}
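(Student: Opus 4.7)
The plan is to chain together two isomorphisms already established in the excerpt. First, the hypothesis $\textrm{Tor}_1^R(\textrm{Tr}(M), C) = 0$ is precisely the condition under which Remark \ref{obs.2.11.sadeghi} produces the short exact sequence
$$0 \rightarrow \lambda M \otimes_R C \rightarrow P \otimes_R C \rightarrow \textrm{Tr}_C(M) \rightarrow 0,$$
with $P\otimes_R C$ of $\textrm{G}_C$-dimension zero. Hence $\textrm{Ext}_R^{\bullet}(P\otimes_R C, C)$ vanishes in positive degrees and the long exact sequence collapses to isomorphisms
$$\textrm{Ext}_R^i(\lambda M \otimes_R C,\,C) \,\cong\, \textrm{Ext}_R^{i+1}(\textrm{Tr}_C(M),\,C)$$
for every $i>0$, exactly as recorded at the end of Remark \ref{obs.2.11.sadeghi}.

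Next, I would invoke Proposition \ref{prop.EXT.TrC}, which uses the reduced $\textrm{G}_C$-perfection of $M$ together with $\textrm{G}_C\textrm{-dim}_R(M)=n$ to give, for every $j>0$,
$$\textrm{Ext}_R^j(\textrm{Tr}_C(M),\,C) \,\cong\, \textrm{Ext}_R^{j+n-1}(\textrm{Ext}_R^n(M,\,C),\,C).$$
Applying this with $j = i+1$ (which is admissible since $i>0$ forces $j>0$), I would then compose the two isomorphisms to conclude
$$\textrm{Ext}_R^i(\lambda M \otimes_R C,\,C) \,\cong\, \textrm{Ext}_R^{i+1}(\textrm{Tr}_C(M),\,C) \,\cong\, \textrm{Ext}_R^{(i+1)+n-1}(\textrm{Ext}_R^n(M,\,C),\,C) \,=\, \textrm{Ext}_R^{i+n}(\textrm{Ext}_R^n(M,\,C),\,C),$$
which is exactly the claimed formula.

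Since both ingredients are already in place, there is no real obstacle. The only point that deserves a brief check is the index shift in the application of Proposition \ref{prop.EXT.TrC}: its statement requires the internal index to be strictly positive, and taking $j=i+1$ with $i>0$ fulfills this without needing to distinguish the cases $n=1$ and $n\geq 2$ (the latter is already handled inside Proposition \ref{prop.EXT.TrC} itself). Thus the corollary reduces to a two-line composition of known isomorphisms.
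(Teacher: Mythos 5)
Your proposal is correct and is exactly the paper's argument: the authors also deduce the corollary by composing the isomorphism of Remark \ref{obs.2.11.sadeghi} with that of Proposition \ref{prop.EXT.TrC}, merely stating it as an immediate consequence. Your explicit index check ($j=i+1>0$) is the only detail the paper leaves implicit.
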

\begin{proof}
This is an immediate consequence of Proposition \ref{prop.EXT.TrC}
and Remark \ref{obs.2.11.sadeghi}. \qed
\end{proof}

\begin{corollary}\label{cor.Gperf.redu.formula.da.GCdim.com.Lambda}
Let $M$ be a reduced $\textrm{\emph{G}}_C$-perfect $R$-module of $\textrm{\emph{G}}_C$-dimension $n$. Assume that
$\textrm{\emph{Tor}}^{R}_1(\textrm{\emph{Tr}}(M), C) = 0$. Then
$$\textrm{\emph{G}}_C\textrm{\emph{-dim}}_R(M) + \textrm{\emph{G}}_C\textrm{\emph{-dim}}_R(\lambda M \otimes_R C) \, = \, \textrm{\emph{G}}_C\textrm{\emph{-dim}}_R
(\textrm{\emph{Ext}}_{R}^n (M,\,C)).$$
\end{corollary}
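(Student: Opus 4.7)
The strategy is to combine the formula from Theorem \ref{teo.Gperf.redu.formula.da.GCdim.generalizado} with a comparison of $\textrm{G}_C\textrm{-dim}_R(\textrm{Tr}_C(M))$ and $\textrm{G}_C\textrm{-dim}_R(\lambda M\otimes_RC)$ extracted from Remark \ref{obs.2.11.sadeghi}. Under the hypothesis $\textrm{Tor}^R_1(\textrm{Tr}(M), C) = 0$, that remark supplies a short exact sequence
$$0 \rightarrow \lambda M\otimes_RC \rightarrow P\otimes_RC \rightarrow \textrm{Tr}_C(M) \rightarrow 0,$$
with $P$ a finite projective $R$-module, and the middle term has $\textrm{G}_C$-dimension zero (as already recalled in the proof of Lemma \ref{obs.syzygy}(ii)).

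First I would use this sequence together with Proposition \ref{prop.item6.golod} to see that $\textrm{G}_C\textrm{-dim}_R(\lambda M\otimes_RC)$ is finite if and only if $\textrm{G}_C\textrm{-dim}_R(\textrm{Tr}_C(M))$ is finite. By Theorem \ref{teo.Gperf.redu.formula.da.GCdim.generalizado} this is, in turn, equivalent to $\textrm{G}_C\textrm{-dim}_R(\textrm{Ext}_R^n(M,C))$ being finite. Thus in the infinite case the desired equality reads $\infty=\infty$, and I may from this point assume that all three dimensions are finite.

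The decisive step is then to check that $\textrm{G}_C\textrm{-dim}_R(\textrm{Tr}_C(M))$ is strictly positive: since $n>0$, Proposition \ref{prop.prop.2.6.geng}(iii) prevents $\textrm{G}_C\textrm{-dim}_R(\textrm{Tr}_C(M))$ from being zero, so under our standing finiteness assumption it must be positive. Proposition \ref{prop.lema.1.9.gerko} now applies to the displayed sequence and yields
$$\textrm{G}_C\textrm{-dim}_R(\lambda M\otimes_RC) \, = \, \textrm{G}_C\textrm{-dim}_R(\textrm{Tr}_C(M))-1.$$
Substituting into the identity of Theorem \ref{teo.Gperf.redu.formula.da.GCdim.generalizado} produces
$$n + \bigl(\textrm{G}_C\textrm{-dim}_R(\lambda M\otimes_RC)+1\bigr) \, = \, \textrm{G}_C\textrm{-dim}_R(\textrm{Ext}_R^n(M,C))+1,$$
which is the desired formula. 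The only genuine subtlety is securing the positivity hypothesis needed to invoke Proposition \ref{prop.lema.1.9.gerko}; after that, the proof is a mechanical combination of the two cited results.
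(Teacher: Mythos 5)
Your argument is correct and follows essentially the same route as the paper's own proof: the exact sequence of Remark \ref{obs.2.11.sadeghi} plus Proposition \ref{prop.item6.golod} for the finiteness equivalence, Proposition \ref{prop.prop.2.6.geng}(iii) to rule out $\textrm{G}_C\textrm{-dim}_R(\textrm{Tr}_C(M))=0$, Proposition \ref{prop.lema.1.9.gerko} for the drop by one, and substitution into Theorem \ref{teo.Gperf.redu.formula.da.GCdim.generalizado}. No gaps.
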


\begin{proof}
By Theorem \ref{teo.Gperf.redu.formula.da.GCdim.generalizado}, we have
\begin{equation}\label{eq.blabla1}
\textrm{G}_C\textrm{-dim}_R(M) + \textrm{G}_C\textrm{-dim}_R
(\textrm{Tr}_C (M)) - 1 \, = \, \textrm{G}_C\textrm{-dim}_R
(\textrm{Ext}_{R}^n (M,\,C)).
\end{equation}
As $\textrm{G}_C\textrm{-dim}_R(P \otimes_R C) = 0$, Remark
\ref{obs.2.11.sadeghi} and Proposition \ref{prop.item6.golod} give
$$\textrm{G}_C\textrm{-dim}_R(\lambda M \otimes_R C) < \infty
\, \Leftrightarrow \, \textrm{G}_C\textrm{-dim}_R (\textrm{Tr}_C (M))
< \infty.$$ Therefore, if $\textrm{G}_C\textrm{-dim}_R (\lambda
M \otimes_R C) = \infty$, then $\textrm{G}_C\textrm{-dim}_R
(\textrm{Tr}_C (M)) = \infty$, which by (\ref{eq.blabla1}) implies that
$\textrm{G}_C\textrm{-dim}_R
(\textrm{Ext}_{R}^n (M, C)) = \infty$ and the claim follows. Now, if $\textrm{G}_C\textrm{-dim}_R(\lambda M
\otimes_R C) < \infty$ then $\textrm{G}_C\textrm{-dim}_R(\textrm{Tr}_C (M)) <
\infty$, and since $\textrm{G}_C\textrm{-dim}_R(M)
> 0$, Proposition \ref{prop.prop.2.6.geng} forces $\textrm{G}_C\textrm{-dim}_R(\textrm{Tr}_C (M))>0$. By Proposition \ref{prop.lema.1.9.gerko} and the exact sequence of Remark \ref{obs.2.11.sadeghi},
$$\textrm{G}_C\textrm{-dim}_R(\lambda M \otimes_R C) \, = \,
\textrm{G}_C\textrm{-dim}_R(\textrm{Tr}_C(M)) - 1,$$
and the assertion now follows by (\ref{eq.blabla1}). \qed
\end{proof}

\begin{corollary}\label{cor.cor.Gcperf.redu.formula.do.depth.generalizado}
Let $R$ be a local ring and $M$ be a reduced
$\textrm{\emph{G}}_C$-perfect $R$-module of
$\textrm{\emph{G}}_C$-dimension $n$. Assume that
$\textrm{\emph{G}}_C\textrm{\emph{-dim}}_R(M^C) < \infty$ and $\textrm{\emph{Tor}}^{R}_1 (\textrm{\emph{Tr}}(M),
C) = 0$. Then
$$\textrm{\emph{depth}}_R(M) + \textrm{\emph{depth}}_R (\lambda M \otimes_R C) \, = \, \textrm{\emph{depth}}(R) +
\textrm{\emph{depth}}_R(\textrm{\emph{Ext}}_{R}^n (M,\,C)).$$
\end{corollary}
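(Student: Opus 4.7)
The plan is to mimic the proof of Corollary \ref{cor.Gcperf.redu.formula.do.depth.generalizado}, replacing the role of $\textrm{Tr}_C(M)$ with $\lambda M\otimes_RC$ and using Corollary \ref{cor.Gperf.redu.formula.da.GCdim.com.Lambda} in place of Theorem \ref{teo.Gperf.redu.formula.da.GCdim.generalizado}. In essence, the corollary will follow by converting the $\textrm{G}_C$-dimension identity given by Corollary \ref{cor.Gperf.redu.formula.da.GCdim.com.Lambda} into a depth identity via the Auslander--Bridger formula for $\textrm{G}_C$-dimension (Theorem \ref{teo.propriedades.Gcdimensao}).

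Before applying Theorem \ref{teo.propriedades.Gcdimensao}, I must verify that each of the three modules $M$, $\lambda M\otimes_RC$, and $\textrm{Ext}_R^n(M,C)$ has finite $\textrm{G}_C$-dimension. For $M$ this is given. For $\textrm{Tr}_C(M)$, the short exact sequence $0\to M^C\to P_0^C\to P_1^C\to \textrm{Tr}_C(M)\to 0$ coming from a projective presentation of $M$, together with Proposition \ref{prop.item6.golod} and the assumption $\textrm{G}_C\textrm{-dim}_R(M^C)<\infty$, yields $\textrm{G}_C\textrm{-dim}_R(\textrm{Tr}_C(M))<\infty$ (exactly as in the proof of Corollary \ref{cor.Gcperf.redu.formula.do.depth.generalizado}). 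The Tor-vanishing hypothesis then activates Remark \ref{obs.2.11.sadeghi}, producing a short exact sequence
$$0\to \lambda M\otimes_RC\to P\otimes_RC\to \textrm{Tr}_C(M)\to 0,$$
in which $\textrm{G}_C\textrm{-dim}_R(P\otimes_RC)=0$ (as seen in the proof of Lemma \ref{obs.syzygy}(ii)). A further application of Proposition \ref{prop.item6.golod} then gives $\textrm{G}_C\textrm{-dim}_R(\lambda M\otimes_RC)<\infty$. Finally, by Corollary \ref{cor.Gperf.redu.formula.da.GCdim.com.Lambda}, the $\textrm{G}_C$-dimension of $\textrm{Ext}_R^n(M,C)$ is automatically finite, being equal to the sum of the two finite numbers already obtained.

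With finiteness of all three $\textrm{G}_C$-dimensions secured, I invoke Theorem \ref{teo.propriedades.Gcdimensao} to rewrite each $\textrm{G}_C$-dimension as $\textrm{depth}(R)$ minus the corresponding depth, substitute into the identity
$$\textrm{G}_C\textrm{-dim}_R(M) + \textrm{G}_C\textrm{-dim}_R(\lambda M\otimes_RC) \, = \, \textrm{G}_C\textrm{-dim}_R(\textrm{Ext}_R^n(M,C))$$
supplied by Corollary \ref{cor.Gperf.redu.formula.da.GCdim.com.Lambda}, and rearrange to arrive at the claimed formula. There is no serious obstacle here: the entire argument is a direct pairing of the Auslander--Bridger formula with the previously established $\textrm{G}_C$-dimension equality; the only delicate point is the bookkeeping that guarantees finite $\textrm{G}_C$-dimension for each module so that Theorem \ref{teo.propriedades.Gcdimensao} genuinely applies.
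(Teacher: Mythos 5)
Your proposal is correct and follows essentially the same route as the paper: the paper likewise deduces finiteness of $\textrm{G}_C\textrm{-dim}_R(\textrm{Tr}_C(M))$ from the hypothesis on $M^C$, passes to $\lambda M\otimes_R C$ and $\textrm{Ext}_R^n(M,C)$ via Corollary \ref{cor.Gperf.redu.formula.da.GCdim.com.Lambda} and its proof, and then combines that $\textrm{G}_C$-dimension identity with Theorem \ref{teo.propriedades.Gcdimensao}. You merely spell out the finiteness bookkeeping that the paper delegates to the earlier proofs.
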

\begin{proof} The condition $\textrm{G}_C\textrm{-dim}_R(M^C)< \infty$ is equivalent to $\textrm{G}_C\textrm{-dim}_R(\textrm{Tr}_C(M))< \infty$ (see the proof of Corollary \ref{cor.Gcperf.redu.formula.do.depth.generalizado}), and this in turn yields, by Corollary \ref{cor.Gperf.redu.formula.da.GCdim.com.Lambda} and its proof, that $\textrm{G}_C\textrm{-dim}_R(\lambda M \otimes_R C)<
\infty$ and $\textrm{G}_C\textrm{-dim}_R(\textrm{Ext}_{R}^n (M, C)) < \infty$. Now the result follows easily by Corollary
\ref{cor.Gperf.redu.formula.da.GCdim.com.Lambda} combined with Theorem \ref{teo.propriedades.Gcdimensao}. \qed
\end{proof}

\medskip

In the next corollary, we get rid of the hypothesis $\textrm{{Tor}}^{R}_1 (\textrm{{Tr}}(M),
C) = 0$ and derive a formula for the depth of $\lambda M$, so as to give, in particular, a criterion for the freeness of this module. 

\begin{corollary}\label{cor.cor.Gcperf.redu.formula.do.depth.generalizado2}
Let $R$ be a Cohen-Macaulay local ring and $M$ be a reduced
$\textrm{\emph{G}}_C$-perfect $R$-module of
$\textrm{\emph{G}}_C$-dimension $n$. Assume that $M^C$ has finite ${\rm G}_C$-dimension and that $M^*$ has finite projective dimension. Then
$$\textrm{{\rm depth}}_R(\lambda M) \, = \, n + \textrm{{\rm depth}}_R(\textrm{{\rm Ext}}_{R}^n (M,\,C)).$$ In particular, $\lambda M$ is a free $R$-module if and only if\, $\textrm{{\rm depth}}_R(\textrm{{\rm Ext}}_{R}^n(M, C))={\rm dim}(R)-n$.
\end{corollary}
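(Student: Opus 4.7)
The plan is to reduce everything to Auslander-Buchsbaum for $\textrm{G}_C$-dimension (Theorem \ref{teo.propriedades.Gcdimensao}) by exploiting Theorem \ref{teo.Gperf.redu.formula.da.GCdim.generalizado}. First, the hypothesis $\textrm{pd}_R(M^*)<\infty$ implies, via the exact sequence $0\to M^*\to F_0^*\to F_1^*\to \textrm{Tr}(M)\to 0$ obtained from a minimal projective presentation $F_1\to F_0\to M\to 0$, that both $\textrm{pd}_R(\lambda M)$ and $\textrm{pd}_R(\textrm{Tr}(M))$ are finite; by \cite[Corollary 6.4.5]{sather}, these coincide with the respective $\textrm{G}_C$-dimensions. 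Moreover, modules of finite projective dimension lie in the Auslander class $\mathcal{A}_C$ (a basic property in the theory of semidualizing modules, see \cite{sather}), which yields $\textrm{Tor}_i^R(\textrm{Tr}(M),C)=0$ for every $i\geq 1$.

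The central step is to establish $\textrm{G}_C\textrm{-dim}_R(\textrm{Tr}_C(M))=\textrm{G}_C\textrm{-dim}_R(\lambda M)+1$. By Proposition \ref{lema.tr.e.trC}, $\textrm{Tr}_C(M)\approx_C \textrm{Tr}(M)\otimes_R C$, and $\textrm{G}_C$-dimension is invariant under $\textrm{add}_R(C)$-equivalence (since direct summing by a module of $\textrm{G}_C$-dimension zero does not change the supremum of Ext-degrees). Now take a finite free resolution $F_\bullet\to \textrm{Tr}(M)$; tensoring with $C$ is exact by the Tor-vanishing, producing a resolution of $\textrm{Tr}(M)\otimes_R C$ by modules in $\textrm{add}_R(C)$. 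Dualizing via $\textrm{Hom}_R(-,C)$ and using $\textrm{Hom}_R(F_i\otimes_R C,C)\cong \textrm{Hom}_R(F_i,\textrm{Hom}_R(C,C))\cong \textrm{Hom}_R(F_i,R)$, we obtain $\textrm{Ext}_R^i(\textrm{Tr}(M)\otimes_R C,C)\cong \textrm{Ext}_R^i(\textrm{Tr}(M),R)$ for every $i\geq 0$; taking suprema via Proposition \ref{prop.6.1.7.sather} then gives $\textrm{G}_C\textrm{-dim}_R(\textrm{Tr}(M)\otimes_R C)=\textrm{pd}_R(\textrm{Tr}(M))$. Finally, from $0\to \lambda M\to P\to \textrm{Tr}(M)\to 0$ (with $P$ projective, arising from the definition of $\lambda$), one has $\textrm{pd}_R(\textrm{Tr}(M))=\textrm{pd}_R(\lambda M)+1$; here $\textrm{Tr}(M)$ is not projective, for otherwise Proposition \ref{prop.prop.2.6.geng}(iii) would force $\textrm{G}_C\textrm{-dim}_R(M)=0$, contradicting $n\geq 1$.

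Substituting into Theorem \ref{teo.Gperf.redu.formula.da.GCdim.generalizado} now yields $\textrm{G}_C\textrm{-dim}_R(\textrm{Ext}_R^n(M,C))=n+\textrm{G}_C\textrm{-dim}_R(\lambda M)$, so $\textrm{Ext}_R^n(M,C)$ has finite $\textrm{G}_C$-dimension (and it is nonzero by Proposition \ref{prop.6.1.7.sather}). Applying Theorem \ref{teo.propriedades.Gcdimensao} to both $\lambda M$ and $\textrm{Ext}_R^n(M,C)$ and subtracting the two identities---using $\textrm{depth}(R)=\dim R$, since $R$ is Cohen-Macaulay---produces exactly $\textrm{depth}_R(\lambda M)-\textrm{depth}_R(\textrm{Ext}_R^n(M,C))=n$, which is the desired formula. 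The ``in particular'' part then follows by Auslander-Buchsbaum: $\lambda M$ has finite projective dimension and so is free (equivalently, projective, since $R$ is local) if and only if $\textrm{depth}_R(\lambda M)=\dim R$, which via the established formula is equivalent to $\textrm{depth}_R(\textrm{Ext}_R^n(M,C))=\dim R-n$. The main obstacle is the Foxby-swap-type identification $\textrm{G}_C\textrm{-dim}_R(\textrm{Tr}(M)\otimes_R C)=\textrm{pd}_R(\textrm{Tr}(M))$: though elegantly obtained via Hom-tensor adjunction, it crucially depends on the Tor-vanishing, i.e., on the Auslander class property of modules of finite projective dimension.
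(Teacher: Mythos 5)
Your proof is correct, and it takes a genuinely different route through the middle of the argument. The paper also begins by extracting $\mathrm{pd}_R(\mathrm{Tr}(M))<\infty$ and $\mathrm{pd}_R(\lambda M)<\infty$ from $\mathrm{pd}_R(M^*)<\infty$, but it obtains the Tor-vanishing $\mathrm{Tor}^R_{i}(\mathrm{Tr}(M),C)=0$ from the Cohen--Macaulayness of $R$ (so that $C$ is maximal Cohen--Macaulay) together with Yoshida's lemma, then invokes Corollary \ref{cor.cor.Gcperf.redu.formula.do.depth.generalizado} to compute $\mathrm{depth}_R(\lambda M\otimes_R C)$, and finally passes from $\lambda M\otimes_R C$ to $\lambda M$ via Auslander's depth formula for Tor-independent modules, $\mathrm{depth}_R(\lambda M\otimes_R C)=\mathrm{depth}_R(\lambda M)+\mathrm{depth}_R(C)-\mathrm{depth}(R)$. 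You instead get the Tor-vanishing from the Auslander class ($\mathrm{Tr}(M)$ has finite flat dimension, hence lies in $\mathcal{A}_C$, cf.\ Example \ref{ex.classe.de.Auslander}(i)), never form the depth of $\lambda M\otimes_R C$, and work entirely at the level of $\mathrm{G}_C$-dimensions: your identification $\mathrm{Ext}^i_R(\mathrm{Tr}_C(M),C)\cong\mathrm{Ext}^i_R(\mathrm{Tr}(M),R)$ is exactly Remark \ref{obs.2.15.ii.sadeghi} (you could simply cite it), and it yields $\mathrm{G}_C\textrm{-dim}_R(\mathrm{Tr}_C(M))=\mathrm{pd}_R(\mathrm{Tr}(M))=\mathrm{G}_C\textrm{-dim}_R(\lambda M)+1$, after which Theorem \ref{teo.Gperf.redu.formula.da.GCdim.generalizado} and the Auslander--Bridger formula finish the job. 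Your route buys two things: it avoids the external inputs (Yoshida's lemma and Auslander's depth formula), and it shows that the hypothesis $\mathrm{G}_C\textrm{-dim}_R(M^C)<\infty$ is redundant given $\mathrm{pd}_R(M^*)<\infty$ and that the main depth equality does not actually require $R$ to be Cohen--Macaulay (that hypothesis is only needed for the freeness criterion, where $\mathrm{depth}(R)=\dim(R)$ is used). Two pedantic points you should make explicit: Theorem \ref{teo.propriedades.Gcdimensao} applies only to non-zero modules, so note that $\lambda M\neq 0$ (which follows since $\mathrm{Tr}(M)$ is not projective and $P\to\mathrm{Tr}(M)$ is a projective cover), and the parenthetical ``$\mathrm{depth}(R)=\dim R$'' plays no role in the subtraction step itself.
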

\begin{proof} Since $M^*\cong \Omega^2{\rm Tr}(M)$, the $R$-module ${\rm Tr}(M)$ (and $\lambda M$ as well) must also have finite projective dimension. Moreover, because $R$ is Cohen-Macaulay, it follows by \cite[Theorem 2.2.6(c)]{sather} that $C$ is maximal Cohen-Macaulay. Applying \cite[Lemma 2.2]{Yoshida} we get $\textrm{{Tor}}^{R}_i (\textrm{{Tr}}(M), C)  = 0$ for all $i>0$. Note that, by Theorem \ref{teo.propriedades.Gcdimensao}, we have ${\rm depth}_R(M)={\rm depth}(R)-n$. Therefore, by Corollary \ref{cor.cor.Gcperf.redu.formula.do.depth.generalizado}, we can write $$\textrm{{depth}}_R(\lambda M \otimes_R C) \, = \, n + \textrm{{depth}}_R(\textrm{{Ext}}_{R}^n (M,\,C)).$$ On the other hand, by definition, there is a short exact sequence $0\rightarrow \lambda M\rightarrow P\rightarrow \textrm{{Tr}}(M)\rightarrow 0$ for some finite projective $R$-module $P$. Tensoring with $C$, we get $\textrm{{Tor}}^{R}_j(\lambda M, C)  = 0$ for all $j>0$, which by \cite[Theorem 1.2]{Auslander} (together with the Auslander-Buchsbaum formula) gives 
$$\textrm{{depth}}_R(\lambda M \otimes_R C) \, = \, \textrm{{depth}}_R(\lambda M) + {\rm depth}_R(C) - {\rm depth}(R) \, = \, \textrm{{depth}}_R(\lambda M).$$ The result follows, and the particular assertion is clear.
\qed
\end{proof}

\begin{remark}\rm Maintain the setting and hypotheses of Corollary \ref{cor.cor.Gcperf.redu.formula.do.depth.generalizado2}. If moreover $\lambda M$ is not free, then, since $M^*=\Omega \lambda M$, we easily derive a formula for the depth of $M^*$:
$$\textrm{{\rm depth}}_R(M^*) \, = \, 1 + n + \textrm{{\rm depth}}_R(\textrm{{\rm Ext}}_{R}^n (M,\,C)).$$
\end{remark}

\section{Reduced $\textrm{G}_C$-perfection and (relative) Auslander
transpose}\label{sec32}

We will extend some results of \cite{AB} to the context of
$\textrm{G}_C$-dimension and present formulas relating the
grade and the reduced grade with respect to $C$. We will also investigate conditions under which
the reduced $\textrm{G}_C$-perfect property is preserved under the operation of taking
Auslander transpose with respect to $C$. As a main consequence, we will generalize \cite[Corollary 3.6]{link2013}.

The following basic result is a generalization of \cite[Proposition 4.17]{AB}.

\begin{proposition} \label{prop.4.16.stablemodule.generalizada}
Let $M$ be a finite $R$-module of finite
$\textrm{\emph{G}}_C$-dimension. Then, for any given $i > 0$,
$$\textrm{\emph{grade}}_R(\textrm{\emph{Ext}}_{R}^i(M,\,C)) \, \geq \, i.$$
\end{proposition}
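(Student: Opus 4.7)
My plan is to reduce the global statement to the local case via localization and then apply the $\textrm{G}_C$-version of the Auslander--Bridger formula.

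Set $N := \textrm{Ext}_{R}^i(M,\,C)$. We may assume $N\neq 0$, otherwise $\textrm{grade}_R(N)=\infty$ and there is nothing to prove. The standard formula for grade over a Noetherian ring gives
$$\textrm{grade}_R(N) \, = \, \textrm{grade}_R(\textrm{ann}_R(N),\,R) \, = \, \inf\{\textrm{depth}(R_{\fp}) \mid \fp \in \textrm{Supp}(N)\},$$
so it suffices to check that every prime $\fp$ with $\textrm{depth}(R_{\fp})<i$ satisfies $N_{\fp}=0$; equivalently, $\textrm{Ext}_{R_{\fp}}^i(M_{\fp},\,C_{\fp})=0$.

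Fix such a $\fp$. If $M_{\fp}=0$ all Ext modules vanish and we are done, so assume $M_{\fp}\neq 0$. Since $C$ and $M$ are finite, the duality $(-)^C$ and the biduality map commute with localization, and $C_{\fp}$ is a semidualizing $R_{\fp}$-module; hence total $C$-reflexivity is preserved under localization and any finite $\textrm{G}_C$-resolution of $M$ localizes to a finite $\textrm{G}_{C_{\fp}}$-resolution of $M_{\fp}$. In particular,
$$\textrm{G}_{C_{\fp}}\textrm{-dim}_{R_{\fp}}(M_{\fp}) \, \leq \, \textrm{G}_C\textrm{-dim}_R(M) \, < \, \infty.$$
Applying the local Auslander--Bridger formula (Theorem \ref{teo.propriedades.Gcdimensao}) over $R_{\fp}$ yields
$$\textrm{G}_{C_{\fp}}\textrm{-dim}_{R_{\fp}}(M_{\fp}) \, = \, \textrm{depth}(R_{\fp})-\textrm{depth}_{R_{\fp}}(M_{\fp}) \, \leq \, \textrm{depth}(R_{\fp}) \, < \, i,$$
and then Proposition \ref{prop.6.1.7.sather} forces $\textrm{Ext}_{R_{\fp}}^i(M_{\fp},\,C_{\fp})=0$, which is what we wanted.

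The only delicate point is the localization step for $\textrm{G}_C$-dimension, i.e. the inequality $\textrm{G}_{C_{\fp}}\textrm{-dim}_{R_{\fp}}(M_{\fp})\leq\textrm{G}_C\textrm{-dim}_R(M)$. This follows from the fact that on finite modules $\textrm{Hom}_R(-,C)$ and $\textrm{Ext}_R^j(-,C)$ commute with localization, so that a totally $C$-reflexive module localizes to a totally $C_{\fp}$-reflexive module; granting this, the proof is a direct combination of Theorem \ref{teo.propriedades.Gcdimensao} and Proposition \ref{prop.6.1.7.sather}.
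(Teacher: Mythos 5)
Your proof is correct and is essentially the paper's own argument: both reduce to the formula $\textrm{grade}_R(N)=\min\{\textrm{depth}(R_{\fp}) \mid \fp\in\textrm{Supp}(N)\}$, localize, and combine the local $\textrm{G}_C$-Auslander--Bridger formula (Theorem \ref{teo.propriedades.Gcdimensao}) with Proposition \ref{prop.6.1.7.sather}; you merely phrase it contrapositively (primes of small depth are not in the support) where the paper shows primes in the support have depth $\geq i$. The only other difference is that the paper cites \cite[Corollary 3.4]{geng2013} for the localization of finite $\textrm{G}_C$-dimension, whereas you justify it directly by localizing a $\textrm{G}_C$-resolution, which is fine.
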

\begin{proof} First, the hypothesis $\textrm{G}_{C}\textrm{-dim}_{R}(M)<\infty$ is equivalent to $\textrm{G}_{C_{\mathfrak
p}}\textrm{-dim}_{R_{\mathfrak p}}(M_{\mathfrak p}) < \infty$
for all ${\mathfrak p} \in \textrm{Spec}(R)$ (see \cite[Corollary 3.4]{geng2013}). Pick $i>0$. If $\textrm{Ext}_{R}^i(M, C) = 0$ then there is nothing
to prove, since by convention $\textrm{grade}_R (0) = \infty$. Thus we may suppose
$\textrm{Ext}_{R}^i(M, C) \neq 0$. According to \cite[Corollary
4.6]{AB}, we can write
\begin{equation}\label{eq.cor.4.6}
\textrm{grade}_R(\textrm{Ext}_{R}^i(M,\,C)) \, = \, \min \{\textrm{depth}(R_{\mathfrak
p}) \mid {\mathfrak p} \in \textrm{Supp}_R (\textrm{Ext}_{R}^i(M,\,C))\}.
\end{equation}
For an arbitrary ${\mathfrak p} \in \textrm{Supp}_R (\textrm{Ext}_{R}^i(M, C))$, we will show that $\textrm{depth}
(R_{\mathfrak p}) \geq i$ and thus by (\ref{eq.cor.4.6}) it
will follow that ${\rm grade}_R(\textrm{Ext}_{R}^i (M, C))\geq i$. Since
$\textrm{G}_{C_{\mathfrak p}}\textrm{-dim}_{R_{\mathfrak p}} (M_{\mathfrak p}) < \infty$ and
$$\textrm{Ext}_{R_{\mathfrak p}}^i(M_{\mathfrak p},\,C_{\mathfrak
p}) \, \cong \, (\textrm{Ext}_{R}^i(M,\,C))_{\mathfrak p} \, \neq \, 0,$$ we must have $\textrm{G}_{C_{\mathfrak
p}}\textrm{-dim}_{R_{\mathfrak p}}(M_{\mathfrak p}) \geq i$ by Proposition \ref{prop.6.1.7.sather}.
Therefore, Theorem \ref{teo.propriedades.Gcdimensao} yields
$$\textrm{depth}(R_{\mathfrak p}) \, = \, \textrm{G}_{C_{\mathfrak
p}}\textrm{-dim}_{R_{\mathfrak p}}(M_{\mathfrak p}) +
\textrm{depth}_{R_{\mathfrak p}}(M_{\mathfrak p}) \, \geq \,
\textrm{G}_{C_{\mathfrak p}}\textrm{-dim}_{R_{\mathfrak p}} (M_{\mathfrak p}) \, \geq \, i,$$ as desired. \qed
\end{proof}

\begin{proposition}\label{prop.Gperf.redu.formula.do.rgrade.e.gradeC} Let $M$ be a reduced $\textrm{\emph{G}}_C$-perfect $R$-module of $\textrm{\emph{G}}_C$-dimension $n$. Then,
$$\textrm{\emph{r.grade}}_R(M, C) + \textrm{\emph{r.grade}}_R(\textrm{\emph{Tr}}_C (M), C) \, = \, \textrm{\emph{grade}}_R(\textrm{\emph{Ext}}_{R}^n (M,\,C)) + 1.$$
\end{proposition}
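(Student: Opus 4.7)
The plan is to reduce everything to a single application of Proposition \ref{prop.EXT.TrC}, using the key fact that the reduced grade of $M$ is already pinned down by the reduced $\textrm{G}_C$-perfect hypothesis: $\textrm{r.grade}_R(M, C) = n$. So the identity to prove collapses to
\[
\textrm{r.grade}_R(\textrm{Tr}_C(M),\,C) \, = \, \textrm{grade}_R(\textrm{Ext}_{R}^n(M,\,C)) - n + 1.
\]

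First I would note that $\textrm{Ext}_{R}^n(M, C) \neq 0$ (by Proposition \ref{prop.6.1.7.sather}, since $\textrm{G}_C\textrm{-dim}_R(M) = n$), so its grade is a finite non-negative integer. By Proposition \ref{prop.4.16.stablemodule.generalizada} applied with $i=n$, this grade is in fact at least $n$; combining with Lemma \ref{lema.grade.e.gradeC}, one has
\[
\textrm{grade}_R(\textrm{Ext}_{R}^n(M,\,C)) \, = \, \inf\{\,j \geq n \mid \textrm{Ext}_{R}^{j}(\textrm{Ext}_{R}^n(M,\,C),\,C) \neq 0\,\}.
\]
Next I would reparametrize $j = i + n - 1$ with $i \geq 1$ and invoke Proposition \ref{prop.EXT.TrC}, which gives $\textrm{Ext}_{R}^{i+n-1}(\textrm{Ext}_{R}^n(M, C), C) \cong \textrm{Ext}_{R}^{i}(\textrm{Tr}_C(M), C)$ for every $i > 0$. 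Under this identification the infimum becomes
\[
\inf\{\,i \geq 1 \mid \textrm{Ext}_{R}^{i}(\textrm{Tr}_C(M),\,C) \neq 0\,\} + n - 1 \, = \, \textrm{r.grade}_R(\textrm{Tr}_C(M),\,C) + n - 1,
\]
which is the reduced formula above. Adding $\textrm{r.grade}_R(M, C) = n$ to both sides yields the claim.

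The only real subtlety is ensuring the infimum on the right is finite, i.e. that $\textrm{r.grade}_R(\textrm{Tr}_C(M), C) < \infty$; the above rewriting takes care of this automatically, because $\textrm{Ext}_{R}^n(M, C) \neq 0$ forces its grade to be finite (over a semiperfect ring this grade is bounded by the depth of the relevant local factor), and then Proposition \ref{prop.EXT.TrC} transports a nonvanishing $\textrm{Ext}$ to $\textrm{Tr}_C(M)$. The main obstacle, as I see it, is recognizing that Proposition \ref{prop.4.16.stablemodule.generalizada} is exactly what allows the truncated infimum (starting at $j = n$ rather than $j = 0$) to coincide with the ordinary grade — without that observation, one would only obtain an inequality; with it, the identity drops out by an index shift.
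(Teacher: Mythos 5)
Your proof is correct and follows essentially the same route as the paper's: both arguments rest on the index-shift isomorphism of Proposition \ref{prop.EXT.TrC} combined with Proposition \ref{prop.4.16.stablemodule.generalizada} and Lemma \ref{lema.grade.e.gradeC} to identify $\textrm{grade}_R(\textrm{Ext}_{R}^n(M,C))$ with $\textrm{r.grade}_R(\textrm{Tr}_C(M),C)+n-1$. The only difference is organizational: the paper splits into the cases $\textrm{r.grade}_R(\textrm{Tr}_C(M),C)$ finite or infinite, whereas you run a single infimum computation and observe that the infinite case is ruled out by $\textrm{Ext}_{R}^n(M,C)\neq 0$.
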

\begin{proof} Let $j>n-1$ be an integer. Applying Proposition \ref{prop.EXT.TrC}
with $i = j-n+1$, we get
\begin{equation}\label{eq.iso.Exts3}
\textrm{Ext}_{R}^{j}(\textrm{Ext}_{R}^n(M,\,C),\,C) \, \cong \,
\textrm{Ext}_{R}^{j-n+1}(\textrm{Tr}_C (M),\,C).
\end{equation}
Assume that $\textrm{r.grade}_R(\textrm{Tr}_C (M), C) <
\infty$. Set $m = \textrm{r.grade}_R(\textrm{Tr}_C (M), C)$.
Taking $j = n+m-1 > n-1$ in (\ref{eq.iso.Exts3}), we get
$$\textrm{Ext}_{R}^{n+m-1}(\textrm{Ext}_{R}^n (M,\,C),\,C) \, \cong \,
\textrm{Ext}_{R}^{m} (\textrm{Tr}_C (M),\,C) \, \neq \, 0,$$ which gives $\textrm{{grade}}_R(\textrm{{Ext}}_{R}^n (M,
C))\leq n+m-1$, and we must show that equality holds. To this end, let $ 0 \leq
k < n+m-1$. Consider first the case $ n - 1 < k < n+m-1$, i.e. $ 1 \leq k - n + 1 < m$. By
(\ref{eq.iso.Exts3}) it follows that
$\textrm{Ext}_{R}^{k}(\textrm{Ext}_{R}^n (M, C), C) =  0$. Now suppose $0 \leq k < n$ and note that, by Proposition \ref{prop.4.16.stablemodule.generalizada},
$\textrm{grade}_R(\textrm{Ext}_{R}^n (M, C)) \geq n$, which implies $\textrm{Ext}_{R}^{k}
(\textrm{Ext}_{R}^n (M, C), C) = 0$ by virtue of
Lemma \ref{lema.grade.e.gradeC}. Thus we have shown that $\textrm{grade}_R(\textrm{Ext}_{R}^n (M, C))=n+m-1$.

Finally, if $\textrm{r.grade}_R(\textrm{Tr}_C (M), C) =
\infty$, then $\textrm{Ext}_{R}^{i} (\textrm{Tr}_C (M), C) = 0$
for all $i > 0$. By (\ref{eq.iso.Exts3}), $$\textrm{Ext}_{R}^{j}(\textrm{Ext}_{R}^n (M,\,C),\,C)
\, = \, 0, \textrm{
for all } j \geq n.$$ As $\textrm{grade}_R(\textrm{Ext}_{R}^n
(M, C)) \geq n$, it follows that $\textrm{Ext}_{R}^{j}
(\textrm{Ext}_{R}^n(M, C), C) = 0$ also for $0 \leq j < n$.
Therefore, $\textrm{grade}_R(\textrm{Ext}_{R}^n (M, C)) =
\infty$. The result follows. \qed
\end{proof}

\begin{lemma}\label{lema.formula.do.rgrade.geral} Given an integer $k>0$, let $0 \rightarrow N \rightarrow X_k \rightarrow \cdots \rightarrow X_1 \rightarrow M \rightarrow 0$ be an exact sequence of finite $R$-modules such that $\textrm{\emph{Ext}}_{R}^i (X_j, C)=0$
for all $i> 0$ and $j = 1, \ldots, k$. If $\textrm{\emph{r.grade}}_R(M, C)>k$,
then $$\textrm{\emph{r.grade}}_R(N, C) \, = \, \textrm{\emph{r.grade}}_R(M, C)-k.$$
\end{lemma}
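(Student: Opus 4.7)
The plan is to split the long exact sequence into short exact sequences and chase Ext. Specifically, set $K_0 = M$ and, for each $j = 1, \ldots, k$, let $K_j = \ker(X_j \to X_{j-1})$ (with the convention that $X_0 = M$, i.e.\ $K_1 = \ker(X_1 \to M)$). By exactness of the given sequence, each $X_{j+1} \to K_j$ is surjective, and exactness at $N$ and $X_k$ forces $N \cong K_k$. This yields $k$ short exact sequences
\[
0 \rightarrow K_j \rightarrow X_j \rightarrow K_{j-1} \rightarrow 0, \qquad j = 1, \ldots, k.
\]

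Next I would feed each short exact sequence into the long exact sequence of $\ext(-, C)$. Since $\ext^i(X_j, C) = 0$ for all $i > 0$ and all $j$, the connecting homomorphisms produce isomorphisms
\[
\ext^{i}(K_j,\,C) \, \cong \, \ext^{i+1}(K_{j-1},\,C) \quad \text{for all } i > 0.
\]
Iterating this from $j = k$ down to $j = 1$ gives
\[
\ext^{i}(N,\,C) \, \cong \, \ext^{i}(K_k,\,C) \, \cong \, \ext^{i+k}(K_0,\,C) \, = \, \ext^{i+k}(M,\,C) \quad \text{for every } i > 0.
\]

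Finally, I would translate this isomorphism into the statement about reduced grades. By definition,
\[
\textrm{r.grade}_R(N, C) \, = \, \inf \{ i > 0 \mid \ext^{i+k}(M,\,C) \neq 0 \}.
\]
Setting $j = i + k$ converts this infimum into $\inf \{ j > k \mid \ext^{j}(M,\,C) \neq 0 \} - k$. The hypothesis $\textrm{r.grade}_R(M, C) > k$ means $\ext^{j}(M, C) = 0$ for all $0 < j \leq k$, so the infimum over $j > k$ coincides with the infimum over $j > 0$, namely $\textrm{r.grade}_R(M, C)$. Hence $\textrm{r.grade}_R(N, C) = \textrm{r.grade}_R(M, C) - k$, as claimed.

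There is no real obstacle: the only subtle point is making sure that the hypothesis $\textrm{r.grade}_R(M, C) > k$ is used exactly to prevent a smaller Ext from appearing at levels $j \leq k$, which would otherwise invalidate the change-of-variables step; everything else is a routine bookkeeping argument with the long exact sequence in Ext.
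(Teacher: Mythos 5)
Your proof is correct and rests on the same mechanism as the paper's: breaking the long exact sequence into short exact sequences and using the vanishing of $\textrm{Ext}_R^{i}(X_j,C)$ to dimension-shift, obtaining $\textrm{Ext}_R^{i}(N,C)\cong \textrm{Ext}_R^{i+k}(M,C)$ for all $i>0$. The paper organizes this as an induction on $k$ with a separate finite/infinite case analysis, whereas you compose the isomorphisms directly and absorb both cases into a single change of variables in the infimum; this is only a difference in bookkeeping.
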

\begin{proof}
We proceed by induction on $k$. Assuming $k = 1$, there are isomorphisms
\begin{equation}\label{eq1.lema.formula.do.rgrade.geral}
\textrm{Ext}_{R}^i (N,\,C) \, \cong \, \textrm{Ext}_{R}^{i+1} (M,\,C),
\textrm{ for all } i > 0.
\end{equation}
First, if $\textrm{r.grade}_R(M, C) = \infty$ then, equivalently,
$\textrm{Ext}_{R}^{j} (M, C)=0$ for all $j> 0$ (see Remark \ref{obs.rgrade.com.respeito.C}(ii)), and then (\ref{eq1.lema.formula.do.rgrade.geral}) gives
$\textrm{Ext}_{R}^i (N, C)=0$ for all $i>0$, so that $\textrm{r.grade}_R(N, C) = \infty$.

If $\textrm{r.grade}_R(M, C) < \infty$, say $t := \textrm{r.grade}_R(M, C)$ (note that $t>1$), then
(\ref{eq1.lema.formula.do.rgrade.geral}) yields
\begin{equation}\label{eq2.lema.formula.do.rgrade.geral}
\textrm{Ext}_{R}^{t-1} (N,\,C) \, \cong \, \textrm{Ext}_{R}^{t} (M,\,C) \, \neq \, 0,
\end{equation} hence
$\textrm{r.grade}_R(N, C) \leq t-1$. In case $t=2$, we have $\textrm{r.grade}_R(N, C) =  1 = t-1$. If $t\geq 3$, 
let $0 < i < t-1$, i.e. $1 < i+1 < t$. Using
(\ref{eq1.lema.formula.do.rgrade.geral}), we get
$\textrm{Ext}_{R}^i (N, C) \cong \textrm{Ext}_{R}^{i+1} (M, C)=0$. Thus, by (\ref{eq2.lema.formula.do.rgrade.geral}), $\textrm{r.grade}_R(N, C) = t-1$.

Now suppose $k > 1$. Set $K = \ker \,(X_{k-1} \rightarrow
X_{k-2})$. Then we have an exact sequence $0 \rightarrow K
\rightarrow X_{k-1} \rightarrow \cdots \rightarrow X_1 \rightarrow
M \rightarrow 0$. By induction,
\begin{equation}\label{eq4.lema.formula.do.rgrade.geral}
\textrm{r.grade}_R(K, C) \, = \, \textrm{r.grade}_R(M, C)-k+1.
\end{equation}
By what we already proved, the exact sequence $0 \rightarrow N \rightarrow X_{k} \rightarrow K \rightarrow 0$ gives an equality
$\textrm{r.grade}_R(N, C) = \textrm{r.grade}_R(K, C)-1$. Substituting into (\ref{eq4.lema.formula.do.rgrade.geral}), we get the result.
\qed
\end{proof}

\medskip

The lemma above will be particularly useful in the proof of the following example, the first part of which generalizes
\cite[Example 3.2]{link2013} (which treats the classical case $C=R$).

\begin{example} \label{ex2.GCperfeito.reduzido} \rm
Let $n$ be a positive integer and let $M$ be a non-zero finite $R$-module such that $\textrm{grade}_R(M) \geq n$. Then:
\begin{itemize}
  \item[(i)] $\mathcal{T}^C_{n} (M)$ is a reduced
  $\textrm{G}_C$-perfect $R$-module of $\textrm{G}_C$-dimension $n$ (or projective dimension $n$ if $C=R$);  
  \item[(ii)] If $R$ is local and $\textrm{grade}_R(M) > n$, then $\mathcal{T}^C_{n}(M)$
  is not a $\textrm{G}_C$-perfect $R$-module, and $\textrm{grade}_R(\mathcal{T}^C_{n} (M))=0$.
 \end{itemize}
\end{example}
\begin{proof}
(i) We prove the claim by induction on $n$. If $n = 1$, then
$\textrm{grade}_R(M) \geq 1$, which implies that $M^{C} = 0$.
A minimal projective presentation $P_1 \rightarrow P_0 \rightarrow M \rightarrow 0$ of $M$ yields a short exact sequence $0 \rightarrow P_0^{C} \rightarrow
P_1^{C} \rightarrow \textrm{Tr}_C (M) \rightarrow 0$, so that
$\textrm{G}_C\textrm{-dim}_R(\textrm{Tr}_C (M)) \leq 1$. Note that
$\textrm{G}_C\textrm{-dim}_R(M)$ is positive since, by Lemma \ref{lema.grade.e.gradeC}, $\textrm{G}_C\textrm{-dim}_R(M)\geq {\rm grade}_R(M)\geq n>0$. Hence $\textrm{G}_C\textrm{-dim}_R(\textrm{Tr}_C (M))>0$ by virtue of Proposition \ref{prop.prop.2.6.geng}(iii). It follows, by Remark
\ref{obs.rgrade.com.respeito.C}(iii), that the $R$-module $\mathcal{T}^C_{1} (M) = \textrm{Tr}_C (M)$ is reduced
$\textrm{G}_C$-perfect of $\textrm{G}_C$-dimension $1$.

Now assume that $n > 1$. Applying the induction hypothesis to
$\mathcal{T}^C_{n-1} (M)$ we obtain that $\mathcal{T}^C_{n-1} (M)$
is a reduced $\textrm{G}_C$-perfect $R$-module of
$\textrm{G}_C$-dimension $n-1$. As $\textrm{Ext}_{R}^{n-1} (M, C)
= 0$, Lemma \ref{lema.chave.das.sequecias.exatas} gives a short
exact sequence
\begin{equation}\label{eq0.ex2.GCperfeito.reduzido}
0 \rightarrow \mathcal{T}^C_{n-1} (M) \rightarrow \textrm{Tr}_C
(P) \rightarrow \mathcal{T}^C_{n} (M) \rightarrow 0.
\end{equation}
Because $\textrm{G}_C\textrm{-dim}_R(\mathcal{T}^C_{n-1} (M)) = n-1$ and $\textrm{G}_C\textrm{-dim}_R(\textrm{Tr}_C (P)) = 0$, this sequence yields $\textrm{G}_C\textrm{-dim}_R (\mathcal{T}^C_{n} (M)) \leq n$. To conclude, 
by virtue of Remark
\ref{obs.rgrade.com.respeito.C}(iii)
it suffices to show that
$\textrm{r.grade}_R(\mathcal{T}^C_{n} (M), C) \geq n$ and
$\textrm{G}_C\textrm{-dim}_R(\mathcal{T}^C_{n} (M)) > 0$.
Since $\Omega^{n-1}M$ is an $R$-syzygy module, Lemma
\ref{obs.syzygy} forces $\textrm{Ext}_{R}^1(\mathcal{T}^C_{n}(M),
C) = 0$ and thus $\textrm{r.grade}_R \, (\mathcal{T}^C_{n} (M), C) > 1$. Applying Lemma \ref{lema.formula.do.rgrade.geral} to (\ref{eq0.ex2.GCperfeito.reduzido}), we get
$\textrm{r.grade}_R(\mathcal{T}^C_{n} (M), C) = n$. Notice, finally, that if we assume $\textrm{G}_C\textrm{-dim}_R(\mathcal{T}^C_{n} (M)) = 0$ then $\textrm{G}_C\textrm{-dim}_R(\Omega^{n-1}M) = 0$ (by Proposition \ref{prop.prop.2.6.geng}(iii)) and this implies that $M$ has
$\textrm{G}_C$-dimension less than $n$, contradicting the
assumption that $\textrm{grade}_R(M) \geq n$ (see Lemma \ref{lema.grade.e.gradeC}). 

The assertion concerning projective dimension is clear from the proof with $C=R$.

\medskip

(ii) For simplicity, set $M' = \mathcal{T}^C_{n} (M)$. By part (i), $M'$ is reduced
 $\textrm{G}_C$-perfect of $\textrm{G}_C$-dimension $n$, hence
Proposition
\ref{prop.Gperf.redu.formula.do.rgrade.e.gradeC} gives
\begin{equation}\label{eq1.ex2.GCperfeito.reduzido}
\textrm{r.grade}_R(M', C) + \textrm{r.grade}_R(\textrm{Tr}_C (M'), C) \, = \, \textrm{grade}_R(\textrm{Ext}_{R}^n
(M',\,C)) + 1.
\end{equation}
By Proposition \ref{prop.lemma.2.12.paper5}, there exists an exact sequence
$0 \rightarrow \Omega^{n-1} M \rightarrow \textrm{Tr}_C (M')
\rightarrow X \rightarrow 0$,
where $\textrm{G}_C\textrm{-dim}_R(X)=0$. In particular,
$\textrm{Ext}_{R}^i (X, C) = 0$ for all $i >0$, which implies that
$\textrm{Ext}_{R}^i (\Omega^{n-1} M, C) \cong \textrm{Ext}_{R}^{i}
(\textrm{Tr}_C (M'), C)$, for all  $i > 0$. Thus,
\begin{equation}\label{eq2.ex2.GCperfeito.reduzido}
\textrm{r.grade}_R(\Omega^{n-1} M, C) \, = \, \textrm{r.grade}_R(\textrm{Tr}_C (M'), C).
\end{equation}
On the other hand, Lemma
\ref{lema.formula.do.rgrade.geral} (with $k=n-1$) yields
\begin{equation}\label{eq3.ex2.GCperfeito.reduzido}
\textrm{r.grade}_R(\Omega^{n-1} M, C) \, = \, \textrm{r.grade}_R(M, C) - n + 1.
\end{equation}
By (\ref{eq1.ex2.GCperfeito.reduzido}),
(\ref{eq2.ex2.GCperfeito.reduzido}),
(\ref{eq3.ex2.GCperfeito.reduzido}), and the equality $\textrm{r.grade}_R(M', C)=n$, we have
\begin{equation}\label{eq4.ex2.GCperfeito.reduzido}
\textrm{r.grade}_R(M, C) \, = \, \textrm{grade}_R(\textrm{Ext}_{R}^n (M',\,C)).
\end{equation}
Now assume that $M'$ is $\textrm{G}_C$-perfect.
So, it has grade $n$. By \cite[10., p.\,68]{golod84} (which requires $R$ to be local), the module $\textrm{Ext}_{R}^n
(M', C)$ must have grade $n$ as well. Therefore, by (\ref{eq4.ex2.GCperfeito.reduzido}), $\textrm{r.grade}_R(M,
C) = n$ which forces $\textrm{grade}_R(M) = n$, a contradiction. Hence indeed $M'$ is not $\textrm{G}_C$-perfect. Clearly, because of (i), this would be violated if $\textrm{grade}_R(M') > 0$. 
\qed
\end{proof}

\medskip

The following theorem investigates when the property of reduced
$\textrm{G}_C$-perfection is preserved under the operation of taking Auslander transpose with respect to $C$.

\begin{theorem}\label{teo.inspirado.no.cor.3.16.paper3}
Let $M$ be a finite $R$-module. Let $n$ and $t$ be two integers. Then the following statements are equivalent:
\begin{itemize}
  \item[(i)] $M$ is reduced $\textrm{\emph{G}}_C$-perfect of $\textrm{\emph{G}}_C$-dimension $n$ and $\textrm{\emph{Ext}}_{R}^n
(M, C)$ is $\textrm{\emph{G}}_C$-perfect of
$\textrm{\emph{G}}_C$-dimension $n+t-1$;
  \item[(ii)] $\textrm{\emph{Tr}}_C (M)$ is reduced $\textrm{\emph{G}}_C$-perfect of $\textrm{\emph{G}}_C$-dimension $t$ and $\textrm{\emph{Ext}}_{R}^t
(\textrm{\emph{Tr}}_C (M), C)$ is $\textrm{\emph{G}}_C$-perfect of
$\textrm{\emph{G}}_C$-dimension $n+t-1$.
\end{itemize}
\end{theorem}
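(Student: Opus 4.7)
My strategy rests on three ingredients: the dimension formula of Theorem \ref{teo.Gperf.redu.formula.da.GCdim.generalizado}, its grade analogue in Proposition \ref{prop.Gperf.redu.formula.do.rgrade.e.gradeC}, and the ``biduality'' exact sequence of Proposition \ref{prop.lemma.2.12.paper5}, which will let me treat (i) $\Rightarrow$ (ii) and (ii) $\Rightarrow$ (i) by essentially the same argument applied to $M$ and to $\textrm{Tr}_C(M)$ respectively. I also observe at the outset that both conditions (i) and (ii) force $n,t\geq 1$, since being reduced $\textrm{G}_C$-perfect entails positive $\textrm{G}_C$-dimension.

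Assume (i). First, I plug the data into Theorem \ref{teo.Gperf.redu.formula.da.GCdim.generalizado} to obtain $n + \gcdim(\textrm{Tr}_C(M)) = (n+t-1)+1$, hence $\gcdim(\textrm{Tr}_C(M)) = t$. Similarly Proposition \ref{prop.Gperf.redu.formula.do.rgrade.e.gradeC} combined with ${\rm grade}_R(\textrm{Ext}_R^n(M,C)) = n+t-1$ (G$_C$-perfection) and ${\rm r.grade}_R(M,C)=n$ yields ${\rm r.grade}_R(\textrm{Tr}_C(M),C)=t$. So $\textrm{Tr}_C(M)$ is reduced $\textrm{G}_C$-perfect of $\textrm{G}_C$-dimension $t$. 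Next I need the $\textrm{G}_C$-dimension and grade of $\textrm{Ext}_R^t(\textrm{Tr}_C(M),C)$; for this I apply Theorem \ref{teo.Gperf.redu.formula.da.GCdim.generalizado} and Proposition \ref{prop.Gperf.redu.formula.do.rgrade.e.gradeC} now to the module $\textrm{Tr}_C(M)$. Both applications require a computation of $\gcdim(\textrm{Tr}_C(\textrm{Tr}_C(M)))$ and of ${\rm r.grade}_R(\textrm{Tr}_C(\textrm{Tr}_C(M)), C)$, which I obtain from the exact sequence
\[
0 \rightarrow M \rightarrow \textrm{Tr}_C(\textrm{Tr}_C(M)) \rightarrow X \rightarrow 0, \qquad \gcdim(X)=0,
\]
of Proposition \ref{prop.lemma.2.12.paper5}. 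Since $\textrm{Ext}_R^i(X,C)=0$ for $i>0$, the long exact sequence gives $\textrm{Ext}_R^i(\textrm{Tr}_C(\textrm{Tr}_C(M)),C)\cong \textrm{Ext}_R^i(M,C)$ for all $i\geq 1$, so both the reduced grade and (using Proposition \ref{prop.6.1.7.sather} together with $n\geq 1$) the $\textrm{G}_C$-dimension of $\textrm{Tr}_C(\textrm{Tr}_C(M))$ equal the corresponding invariants of $M$, namely $n$. Plugging $t$ and $n$ into the two formulas then yields
\[
\gcdim(\textrm{Ext}_R^t(\textrm{Tr}_C(M),C)) \, = \, n+t-1 \, = \, {\rm grade}_R(\textrm{Ext}_R^t(\textrm{Tr}_C(M),C)),
\]
which is exactly the $\textrm{G}_C$-perfection asserted in (ii).

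For (ii) $\Rightarrow$ (i), I apply the direction just proved with $M$ replaced by $\textrm{Tr}_C(M)$, giving that $\textrm{Tr}_C(\textrm{Tr}_C(M))$ is reduced $\textrm{G}_C$-perfect of $\textrm{G}_C$-dimension $n$ and that $\textrm{Ext}_R^n(\textrm{Tr}_C(\textrm{Tr}_C(M)),C)$ is $\textrm{G}_C$-perfect of $\textrm{G}_C$-dimension $n+t-1$. I then transfer these statements from $\textrm{Tr}_C(\textrm{Tr}_C(M))$ to $M$ using again the sequence of Proposition \ref{prop.lemma.2.12.paper5}: the isomorphisms $\textrm{Ext}_R^i(\textrm{Tr}_C(\textrm{Tr}_C(M)),C) \cong \textrm{Ext}_R^i(M,C)$ for $i\geq 1$ give at once that ${\rm r.grade}_R(M,C)=n$, that $\gcdim(M)=n$ (using $n\geq 1$ and Proposition \ref{prop.6.1.7.sather}), and that $\textrm{Ext}_R^n(M,C) \cong \textrm{Ext}_R^n(\textrm{Tr}_C(\textrm{Tr}_C(M)),C)$, so the $\textrm{G}_C$-perfection statement in (i) follows from the corresponding one for $\textrm{Tr}_C(\textrm{Tr}_C(M))$.

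The step I expect to require most care is the transfer between $M$ and $\textrm{Tr}_C(\textrm{Tr}_C(M))$: specifically, verifying that the $\textrm{G}_C$-dimensions truly agree and not merely dominate one another. This is where the positivity $n\geq 1$ (automatic from reduced $\textrm{G}_C$-perfection) is essential, because it is precisely the vanishing/non-vanishing of $\textrm{Ext}_R^n(-,C)$ at $n\geq 1$ that is preserved by the sequence of Proposition \ref{prop.lemma.2.12.paper5}; the $i=0$ term may differ, but it is irrelevant for the computation of $\gcdim$ as a supremum restricted to indices $\geq 1$ in this range.
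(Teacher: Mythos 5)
Your proposal is correct and follows essentially the same route as the paper: both directions rest on Theorem \ref{teo.Gperf.redu.formula.da.GCdim.generalizado}, Proposition \ref{prop.Gperf.redu.formula.do.rgrade.e.gradeC}, and the identification of the invariants of $\textrm{Tr}_C(\textrm{Tr}_C(M))$ with those of $M$ via the exact sequence of Proposition \ref{prop.lemma.2.12.paper5} (the paper gets the equality of $\textrm{G}_C$-dimensions from Proposition \ref{prop.obs6.1.9.sather}, you from the Ext-isomorphisms plus Proposition \ref{prop.6.1.7.sather}, which requires first noting finiteness via Proposition \ref{prop.item6.golod} -- a trivially fillable step). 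Your packaging of (ii)$\Rightarrow$(i) as ``apply (i)$\Rightarrow$(ii) to $\textrm{Tr}_C(M)$ and transfer back'' is a harmless reorganization of the paper's symmetric direct computation.
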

\begin{proof}
Set $N = \textrm{Tr}_C (M)$. By Proposition
\ref{prop.lemma.2.12.paper5}, we have a short exact sequence
\begin{equation}\label{eq.seq.exata.lema2.12.paper5}
0 \rightarrow M \rightarrow \textrm{Tr}_C (N) \rightarrow X
\rightarrow 0
\end{equation}
where $\textrm{G}_C\textrm{-dim}_R(X)=0$. As
$\textrm{Ext}_{R}^i (X, C) = 0$ for all $i >0$, the
sequence (\ref{eq.seq.exata.lema2.12.paper5}) gives $\textrm{Ext}_{R}^i (M, C) \cong
\textrm{Ext}_{R}^{i} (\textrm{Tr}_C (N), C) \textrm{ for all } i
> 0,$ so that
\begin{equation}\label{eq.isomorfismos.EXTs.TrC.TrC}
\textrm{r.grade}_R(\textrm{Tr}_C (N), C) \, = \, \textrm{r.grade}_R(M, C).
\end{equation}
Applying Proposition
\ref{prop.obs6.1.9.sather} to
(\ref{eq.seq.exata.lema2.12.paper5}), we get
\begin{equation}\label{eq2.isomorfismos.EXTs.TrC.TrC}
\textrm{G}_C\textrm{-dim}_R(\textrm{Tr}_C (N)) \, = \,
\textrm{G}_C\textrm{-dim}_R(M).
\end{equation}

We are now prepared to prove
(i)$\Rightarrow$(ii). By Theorem
\ref{teo.Gperf.redu.formula.da.GCdim.generalizado} and Proposition
\ref{prop.Gperf.redu.formula.do.rgrade.e.gradeC}, we have
\begin{eqnarray*}
\textrm{G}_C\textrm{-dim}_R(N) & = & \textrm{G}_C\textrm{-dim}_R (\textrm{Ext}_{R}^n (M,\,C)) + 1 - \textrm{G}_C\textrm{-dim}_R(M) \\
                        & = & \textrm{grade}_R (\textrm{Ext}_{R}^n (M, C)) + 1 - \textrm{r.grade}_R(M, C) \\
                        & = & \textrm{r.grade}_R(N, C),
\end{eqnarray*}
and hence $N$ is reduced $\textrm{G}_C$-perfect of
$\textrm{G}_C$-dimension equal to $(n+t-1)+1-n=t$. Thus, similarly,
\begin{equation}\label{eq.GC.dimensao.formula}
\textrm{G}_C\textrm{-dim}_R(N) + \textrm{G}_C\textrm{-dim}_R
(\textrm{Tr}_C (N)) \, = \, \textrm{G}_C\textrm{-dim}_R(\textrm{Ext}_{R}^t (N,\,C))+1
\end{equation} and
\begin{equation}\label{eq.r.grade.c.formula}
\textrm{r.grade}_R(N, C) + \textrm{r.grade}_R(\textrm{Tr}_C (N), C) \, = \, \textrm{grade}_R(\textrm{Ext}_{R}^t
(N,\,C))+1.
\end{equation}
By (\ref{eq.isomorfismos.EXTs.TrC.TrC}),
(\ref{eq2.isomorfismos.EXTs.TrC.TrC}),
(\ref{eq.GC.dimensao.formula}) and (\ref{eq.r.grade.c.formula}),
we get
\begin{eqnarray*}
\textrm{G}_C\textrm{-dim}_R(\textrm{Ext}_{R}^t (N,\,C)) & = & \textrm{G}_C\textrm{-dim}_R(N) + n -1\\
                        & = & \textrm{r.grade}_R(N, C) + n - 1\\
                        & = & \textrm{grade}_R (\textrm{Ext}_{R}^t (N,\,C)).
\end{eqnarray*}
Therefore, $\textrm{Ext}_{R}^t (N, C)$ is $\textrm{G}_C$-perfect
of $\textrm{G}_C$-dimension $t+n-1$.

Now let us show the implication (ii)$\Rightarrow$(i). By Theorem
\ref{teo.Gperf.redu.formula.da.GCdim.generalizado} and Proposition
\ref{prop.Gperf.redu.formula.do.rgrade.e.gradeC}, we get
\begin{equation}\label{eq2.GC.dimensao.formula}
\textrm{G}_C\textrm{-dim}_R(\textrm{Tr}_C (N)) \, = \,
\textrm{r.grade}_R(\textrm{Tr}_C (N), C).
\end{equation}
Thus, by (\ref{eq.isomorfismos.EXTs.TrC.TrC}),
(\ref{eq2.isomorfismos.EXTs.TrC.TrC}) and
(\ref{eq2.GC.dimensao.formula}), $M$ is reduced
$\textrm{G}_C$-perfect of $\textrm{G}_C$-dimension $n$. Similarly,
\begin{eqnarray*}
\textrm{G}_C\textrm{-dim}_R(\textrm{Ext}_{R}^n (M,\,C)) & = & \textrm{G}_C\textrm{-dim}_R(M) + t -1\\
                        & = & \textrm{r.grade}_R(M, C) + t - 1\\
                        & = & \textrm{grade}_R (\textrm{Ext}_{R}^n (M,\,C)),
\end{eqnarray*}
thus proving that $\textrm{Ext}_{R}^n (M, C)$ is $\textrm{G}_C$-perfect of $\textrm{G}_C$-dimension $n+t-1$. \qed
\end{proof}

\begin{corollary}\label{cor0.do.teo.inspirado.no.cor.3.16.paper3}
Let $R$ be a local ring and $M$ be a $\textrm{\emph{G}}_C$-perfect
$R$-module of grade $n > 0$. Then $\textrm{\emph{Tr}}_C (M)$ is a
reduced $\textrm{\emph{G}}_C$-perfect $R$-module of
$\textrm{\emph{G}}_C$-dimension $1$, and the $R$-module
$\textrm{\emph{Ext}}_{R}^1 (\textrm{\emph{Tr}}_C (M),C)$ is
$\textrm{\emph{G}}_C$-perfect of $\textrm{\emph{G}}_C$-dimension
$n$.
\end{corollary}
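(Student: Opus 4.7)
The plan is to deduce this corollary directly from Theorem \ref{teo.inspirado.no.cor.3.16.paper3}, applied with the parameter values equal to $n$ and $t=1$. The task therefore reduces to verifying condition (i) of that theorem for the module $M$ in question, after which condition (ii) will yield precisely the stated conclusion.

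First I would check that $M$ itself is already reduced $\textrm{G}_C$-perfect of $\textrm{G}_C$-dimension $n$. Since $M$ is assumed $\textrm{G}_C$-perfect of grade $n>0$, we have the equality $\textrm{grade}_R(M)=\textrm{G}_C\textrm{-dim}_R(M)=n>0$. Because the grade is strictly positive, Remark \ref{obs.rgrade.com.respeito.C}(i) forces $\textrm{r.grade}_R(M,C)=\textrm{grade}_R(M)=n$, so that the reduced grade with respect to $C$ also coincides with the $\textrm{G}_C$-dimension. This takes care of the first part of condition (i).

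Second, I would verify the remaining portion of (i), namely that $\textrm{Ext}_R^n(M,C)$ is $\textrm{G}_C$-perfect of $\textrm{G}_C$-dimension $n+t-1=n$. This is where the local hypothesis enters, via the result of Golod cited in the proof of Example \ref{ex2.GCperfeito.reduzido}(ii) (\cite[10., p.\,68]{golod84}), which precisely asserts that for a $\textrm{G}_C$-perfect module of grade $n$ over a local ring, the module $\textrm{Ext}_R^n(M,C)$ is again $\textrm{G}_C$-perfect of the same grade $n$. Combined with Proposition \ref{prop.4.16.stablemodule.generalizada} (which already guarantees $\textrm{grade}_R(\textrm{Ext}_R^n(M,C))\geq n$), this identifies both the grade and the $\textrm{G}_C$-dimension of $\textrm{Ext}_R^n(M,C)$ as equal to $n$.

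With both pieces of (i) in place, I would invoke Theorem \ref{teo.inspirado.no.cor.3.16.paper3} to pass to (ii), which reads exactly: $\textrm{Tr}_C(M)$ is reduced $\textrm{G}_C$-perfect of $\textrm{G}_C$-dimension $t=1$, and $\textrm{Ext}_R^1(\textrm{Tr}_C(M),C)$ is $\textrm{G}_C$-perfect of $\textrm{G}_C$-dimension $n+t-1=n$. These are the two claims of the corollary. The only non-formal step in the argument is the appeal to Golod's theorem for the $\textrm{G}_C$-perfectness of $\textrm{Ext}_R^n(M,C)$; beyond that, the proof is essentially a specialization of the main theorem of the section with $t=1$ and a verification that the hypotheses are automatic under $\textrm{G}_C$-perfection of positive grade.
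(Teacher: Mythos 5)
Your proposal is correct and follows essentially the same route as the paper: the paper likewise verifies condition (i) of Theorem \ref{teo.inspirado.no.cor.3.16.paper3} by noting that a $\textrm{G}_C$-perfect module of positive grade is reduced $\textrm{G}_C$-perfect (Example \ref{ex1.GCperfeito.reduzido}, which rests on exactly the observation from Remark \ref{obs.rgrade.com.respeito.C}(i) that you spell out) and by invoking Golod's result \cite[10., p.\,68]{golod84} for the $\textrm{G}_C$-perfection of $\textrm{Ext}_R^n(M,C)$, and then applies the theorem with $t=1$. No gaps.
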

\begin{proof}
By Example \ref{ex1.GCperfeito.reduzido} and \cite[10., p.\,68]{golod84},
$M$ is reduced $\textrm{G}_C$-perfect of $\textrm{G}_C$-dimension
$n$ and $\textrm{Ext}_{R}^n (M, C)$ is a $\textrm{G}_C$-perfect
$R$-module of $\textrm{G}_C$-dimension $n = n + 1 - 1$. Now the
assertion is clear by Theorem
\ref{teo.inspirado.no.cor.3.16.paper3} with $t = 1$. \qed
\end{proof}

\begin{corollary}\label{cor.do.teo.inspirado.no.cor.3.16.paper3}
Let $n>0$ and $t>2$ be two integers. Let $M$ be a reduced
$\textrm{\emph{G}}_C$-perfect $R$-module of
$\textrm{\emph{G}}_C$-dimension $n$ such that
$\textrm{\emph{Ext}}_{R}^n (M,C)$ is a
$\textrm{\emph{G}}_C$-perfect $R$-module of
$\textrm{\emph{G}}_C$-dimension $n+t-1$. Then, $M^C$ is reduced
$\textrm{\emph{G}}_C$-perfect of $\textrm{\emph{G}}_C$-dimension
$t-2$ and $\textrm{\emph{Ext}}_{R}^{t-2} (M^C,C)$ is
$\textrm{\emph{G}}_C$-perfect of $\textrm{\emph{G}}_C$-dimension
$n+t-1$.
\end{corollary}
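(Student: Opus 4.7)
The plan is to invoke Theorem \ref{teo.inspirado.no.cor.3.16.paper3} directly on the hypothesis and then transport the resulting information about $\textrm{Tr}_C(M)$ back to $M^C$ using the four-term exact sequence built into the definition of the relative transpose. Since condition (i) of that theorem is exactly our hypothesis (with parameters $n$ and $t$), we obtain condition (ii): writing $N := \textrm{Tr}_C(M)$, the module $N$ is reduced $\textrm{G}_C$-perfect of $\textrm{G}_C$-dimension $t$, and $\textrm{Ext}_R^t(N, C)$ is $\textrm{G}_C$-perfect of $\textrm{G}_C$-dimension $n+t-1$. In particular $\textrm{Ext}_R^i(N, C) = 0$ for $0 < i < t$ and for $i > t$, while $\textrm{Ext}_R^t(N, C) \neq 0$.

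Next, from a minimal projective presentation $P_1 \to P_0 \to M \to 0$ we have (see Definition \ref{def.TrC}) the exact sequence
$$0 \rightarrow M^C \rightarrow P_0^C \rightarrow P_1^C \rightarrow N \rightarrow 0.$$
Splitting it at the image $K := \textrm{Im}(P_0^C \to P_1^C)$ yields two short exact sequences $0 \to M^C \to P_0^C \to K \to 0$ and $0 \to K \to P_1^C \to N \to 0$. Each $P_i^C$ lies in $\textrm{add}_R(C)$ (as in the proof of Lemma \ref{obs.syzygy}(ii)) and therefore has $\textrm{G}_C$-dimension zero; in particular $\textrm{Ext}_R^{\geq 1}(P_i^C, C) = 0$. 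Consequently, the long exact sequences of $\textrm{Ext}_R(-, C)$ collapse to isomorphisms
$$\textrm{Ext}_R^i(M^C, C) \,\cong\, \textrm{Ext}_R^{i+1}(K, C) \,\cong\, \textrm{Ext}_R^{i+2}(N, C), \quad i \geq 1,$$
and Proposition \ref{prop.item6.golod} shows that $K$ and then $M^C$ have finite $\textrm{G}_C$-dimension.

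Combining the two pieces, for $i \geq 1$ the group $\textrm{Ext}_R^i(M^C, C)$ vanishes unless $i+2 = t$, and is nonzero precisely when $i = t-2$. Using $t > 2$ (so $t-2 \geq 1$), this gives $\textrm{r.grade}_R(M^C, C) = t-2$ and also confirms $M^C \neq 0$ (from the non-vanishing of $\textrm{Ext}_R^{t-2}(M^C, C) \cong \textrm{Ext}_R^t(N, C)$). Proposition \ref{prop.6.1.7.sather} then yields
$$\textrm{G}_C\textrm{-dim}_R(M^C) \,=\, \sup\{i \geq 0 \mid \textrm{Ext}_R^i(M^C, C) \neq 0\} \,=\, t-2,$$
the $i=0$ contribution being dominated by $t-2 \geq 1$. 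Therefore $M^C$ is reduced $\textrm{G}_C$-perfect of $\textrm{G}_C$-dimension $t-2$, and the final claim follows from the isomorphism $\textrm{Ext}_R^{t-2}(M^C, C) \cong \textrm{Ext}_R^t(N, C)$ together with the fact, already obtained in the first paragraph, that the right-hand side is $\textrm{G}_C$-perfect of $\textrm{G}_C$-dimension $n+t-1$.

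There is no substantial obstacle: the argument is essentially a controlled bookkeeping exercise. The only mildly delicate point is ensuring the dimension shift by two is exact (rather than an inequality); this is guaranteed by $\textrm{G}_C\textrm{-dim}_R(P_i^C) = 0$, which removes all Ext noise and lets the hypothesis $t > 2$ feed in cleanly to keep the relevant index $t-2$ strictly positive, so that it governs both $\textrm{r.grade}_R(M^C, C)$ and $\textrm{G}_C\textrm{-dim}_R(M^C)$ simultaneously.
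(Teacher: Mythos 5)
Your proposal is correct and follows essentially the same route as the paper: invoke Theorem \ref{teo.inspirado.no.cor.3.16.paper3} to transfer the hypothesis to $\textrm{Tr}_C(M)$, then use the four-term sequence $0 \to M^C \to P_0^C \to P_1^C \to \textrm{Tr}_C(M) \to 0$ to shift degrees by two. The only cosmetic difference is that the paper pins down $\textrm{G}_C\textrm{-dim}_R(M^C)$ and $\textrm{r.grade}_R(M^C,C)$ via Proposition \ref{prop.lema.1.9.gerko} and Lemma \ref{lema.formula.do.rgrade.geral}, whereas you read both off the collapsed Ext isomorphisms together with Proposition \ref{prop.6.1.7.sather}; both computations are valid.
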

\begin{proof}
By Theorem \ref{teo.inspirado.no.cor.3.16.paper3}, $\textrm{Tr}_C
(M)$ is reduced $\textrm{G}_C$-perfect of $\textrm{G}_C$-dimension
$t$ and moreover $\textrm{Ext}_{R}^t (\textrm{Tr}_C (M),C)$ is
$\textrm{G}_C$-perfect of $\textrm{G}_C$-dimension $n+t-1$.
We have an exact sequence
\begin{equation}\label{eq1.cor.do.teo.inspirado.no.cor.3.16.paper3}
0 \rightarrow M^C \rightarrow P_0^C \rightarrow P_1^C
\rightarrow  \textrm{Tr}_C (M) \rightarrow 0,
\end{equation}
where $P_0$ and $P_1$ are finite projective $R$-modules. Breaking this sequence into two short exact sequences, and using Proposition \ref{prop.lema.1.9.gerko}, we obtain
\begin{equation}\label{eq2.cor.do.teo.inspirado.no.cor.3.16.paper3}
\textrm{G}_C\textrm{-dim}_R(M^C) \, = \, \textrm{G}_C\textrm{-dim}_R(\textrm{Tr}_C (M)) - 2 \, = \, t-2.
\end{equation}
Now, applying Lemma \ref{lema.formula.do.rgrade.geral} to
(\ref{eq1.cor.do.teo.inspirado.no.cor.3.16.paper3}), we get
\begin{equation}\label{eq3.cor.do.teo.inspirado.no.cor.3.16.paper3}
\textrm{r.grade}_R(M^C, C) \, = \, \textrm{r.grade}_R(\textrm{Tr}_C (M), C) - 2 \, = \, t-2.
\end{equation}
By (\ref{eq2.cor.do.teo.inspirado.no.cor.3.16.paper3}) and
(\ref{eq3.cor.do.teo.inspirado.no.cor.3.16.paper3}), $M^C$ is
reduced $\textrm{G}_C$-perfect of $\textrm{G}_C$-dimension $t-2$.
Finally, in virtue of the isomorphism
$\textrm{Ext}_{R}^{t-2} (M^C, C)  \cong \textrm{Ext}_{R}^{t} (\textrm{Tr}_C(M), C)$, we conclude that $\textrm{Ext}_{R}^{t-2} (M^C,C)$ is
$\textrm{G}_C$-perfect of $\textrm{G}_C$-dimension $n+t-1$. \qed
\end{proof}

\medskip

Yet another consequence of Theorem
\ref{teo.inspirado.no.cor.3.16.paper3} is the following generalization of
\cite[Corollary 3.6]{link2013}.

\begin{corollary}\label{cor.3.16.paper3.generalizado}
Let $M$ be a horizontally linked $R$-module. Assume that
$\textrm{\emph{Tor}}^{R}_1 (\textrm{\emph{Tr}}(M), C) = 0$. Let $n$ and $t$ be two positive integers. Then the following statements are equivalent:
\begin{itemize}
  \item[(i)] $M$ is reduced $\textrm{\emph{G}}_C$-perfect of $\textrm{\emph{G}}_C$-dimension $n$ and $\textrm{\emph{Ext}}_{R}^n
(M, C)$ is $\textrm{\emph{G}}_C$-perfect of
$\textrm{\emph{G}}_C$-dimension $n+t$;
  \item[(ii)] $\lambda M \otimes_R C$ is reduced $\textrm{\emph{G}}_C$-perfect
of $\textrm{\emph{G}}_C$-dimension $t$ and
$\textrm{\emph{Ext}}_{R}^t (\lambda M \otimes_R C, C)$ is
$\textrm{\emph{G}}_C$-perfect of $\textrm{\emph{G}}_C$-dimension
$n+t$.
\end{itemize}
\end{corollary}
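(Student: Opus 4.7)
My plan is to reduce the statement to Theorem~\ref{teo.inspirado.no.cor.3.16.paper3} (applied with the parameter $t$ replaced by $t+1$) by transporting the conditions on $\textrm{Tr}_C(M)$ to corresponding conditions on $\lambda M \otimes_R C$ via the tools in Remark~\ref{obs.2.11.sadeghi}. Concretely, Theorem~\ref{teo.inspirado.no.cor.3.16.paper3} with parameters $(n, t+1)$ says that (i) is equivalent to the auxiliary statement
\begin{center}
(i$'$)\ \ $\textrm{Tr}_C(M)$ is reduced $\textrm{G}_C$-perfect of $\textrm{G}_C$-dimension $t+1$, and $\textrm{Ext}_R^{t+1}(\textrm{Tr}_C(M),C)$ is $\textrm{G}_C$-perfect of $\textrm{G}_C$-dimension $n+t$.
\end{center}
Thus the only real work is to show that (i$'$) is equivalent to (ii).

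For this, I would invoke the hypothesis $\textrm{Tor}_1^R(\textrm{Tr}(M),C)=0$, which by Remark~\ref{obs.2.11.sadeghi} produces a short exact sequence
$$0 \rightarrow \lambda M \otimes_R C \rightarrow P \otimes_R C \rightarrow \textrm{Tr}_C(M) \rightarrow 0$$
with $\textrm{G}_C\textrm{-dim}_R(P \otimes_R C)=0$, together with natural isomorphisms $\textrm{Ext}_R^i(\lambda M \otimes_R C,\,C) \cong \textrm{Ext}_R^{i+1}(\textrm{Tr}_C(M),\,C)$ for every $i>0$. The isomorphisms immediately give the shift $\textrm{r.grade}_R(\lambda M \otimes_R C,C) = \textrm{r.grade}_R(\textrm{Tr}_C(M),C)-1$ as soon as either side is positive, and identify the ``top'' Ext module in the statement, namely $\textrm{Ext}_R^t(\lambda M \otimes_R C, C) \cong \textrm{Ext}_R^{t+1}(\textrm{Tr}_C(M),C)$. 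Applying Proposition~\ref{prop.lema.1.9.gerko} to the displayed sequence, one gets $\textrm{G}_C\textrm{-dim}_R(\lambda M \otimes_R C) = \textrm{G}_C\textrm{-dim}_R(\textrm{Tr}_C(M))-1$ whenever $\textrm{G}_C\textrm{-dim}_R(\textrm{Tr}_C(M))>0$. Putting these together, ``$\textrm{Tr}_C(M)$ reduced $\textrm{G}_C$-perfect of dimension $t+1$'' is exactly ``$\lambda M \otimes_R C$ reduced $\textrm{G}_C$-perfect of dimension $t$'', while the condition on $\textrm{Ext}_R^{t+1}(\textrm{Tr}_C(M),C)$ becomes the condition on $\textrm{Ext}_R^t(\lambda M \otimes_R C, C)$. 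This yields the equivalence (i$'$)$\Leftrightarrow$(ii) and hence the claim.

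The only delicate point is book-keeping around positivity of dimensions and the role of horizontal linkage: when going from (ii) to (i$'$), I must know a priori that $\textrm{G}_C\textrm{-dim}_R(\textrm{Tr}_C(M)) > 0$ in order to apply Proposition~\ref{prop.lema.1.9.gerko}. The horizontal linkage of $M$ (together with Theorem~\ref{teo2.paper1}) ensures that $M$ is stable and non-projective as soon as it is non-zero, which prevents $\textrm{Tr}_C(M)$ from being $\textrm{add}_R(C)$-trivial and in particular keeps its $\textrm{G}_C$-dimension positive once $\lambda M \otimes_R C$ has positive $\textrm{G}_C$-dimension; this is where the horizontal linkage hypothesis is actually used. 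Apart from this check, every other ingredient is already in place from Theorem~\ref{teo.inspirado.no.cor.3.16.paper3}, Remark~\ref{obs.2.11.sadeghi}, and Proposition~\ref{prop.lema.1.9.gerko}.
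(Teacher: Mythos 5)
Your overall strategy coincides with the paper's: apply Theorem \ref{teo.inspirado.no.cor.3.16.paper3} with the parameter $t$ replaced by $t+1$, and then show that the resulting intermediate statement about $\textrm{Tr}_C(M)$ (your (i$'$)) is equivalent to (ii) via the short exact sequence $0\to\lambda M\otimes_RC\to P\otimes_RC\to\textrm{Tr}_C(M)\to0$ of Remark \ref{obs.2.11.sadeghi}. There is, however, a gap in the reduced-grade bookkeeping. The isomorphisms $\textrm{Ext}_R^i(\lambda M\otimes_RC,C)\cong\textrm{Ext}_R^{i+1}(\textrm{Tr}_C(M),C)$ hold only for $i>0$ and hence say nothing about $\textrm{Ext}_R^1(\textrm{Tr}_C(M),C)$; consequently the shift $\textrm{r.grade}_R(\lambda M\otimes_RC,C)=\textrm{r.grade}_R(\textrm{Tr}_C(M),C)-1$ is false in general without the additional input $\textrm{Ext}_R^1(\textrm{Tr}_C(M),C)=0$. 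Your proviso ``as soon as either side is positive'' is vacuous, since reduced grades are by definition at least $1$; and if $\textrm{Ext}_R^1(\textrm{Tr}_C(M),C)\neq0$ your formula would force the left-hand side to be $0$, which is impossible. In the direction (ii)$\Rightarrow$(i$'$) this vanishing is genuinely needed (otherwise $\textrm{r.grade}_R(\textrm{Tr}_C(M),C)$ could be $1$ while $\textrm{r.grade}_R(\lambda M\otimes_RC,C)=t$), and it is exactly here that the horizontal linkage hypothesis enters: since $M$ is horizontally linked, $\textrm{Ext}_R^1(\textrm{Tr}_C(M),C)=0$ by \cite[Remark 2.15(i)]{link2016} (equivalently, $M$ is an $R$-syzygy by Theorem \ref{teo2.paper1}, hence a $C$-syzygy, and one applies Lemma \ref{obs.syzygy}(ii)). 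This step is missing from your argument.

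Relatedly, your account of where linkage is used is misplaced. The positivity of $\textrm{G}_C\textrm{-dim}_R(\textrm{Tr}_C(M))$ needed to invoke Proposition \ref{prop.lema.1.9.gerko} in the direction (ii)$\Rightarrow$(i$'$) does not require linkage: if $\textrm{G}_C\textrm{-dim}_R(\textrm{Tr}_C(M))=0$, then Proposition \ref{prop.item2.golod} applied to the displayed short exact sequence would give $\textrm{G}_C\textrm{-dim}_R(\lambda M\otimes_RC)=0$, contradicting $t>0$. (Also, not being $\textrm{add}_R(C)$-trivial does not by itself imply positive $\textrm{G}_C$-dimension, so the mechanism you cite via stability does not establish what you want.) Once the vanishing of $\textrm{Ext}_R^1(\textrm{Tr}_C(M),C)$ is inserted, both the $\textrm{G}_C$-dimension shift and the reduced-grade shift go through and your argument closes, matching the paper's proof.
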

\begin{proof}
By Theorem \ref{teo.inspirado.no.cor.3.16.paper3}, it
suffices to prove that item (ii) is equivalent to the following assertion:
\begin{itemize}
  \item[(iii)] $\textrm{Tr}_C (M)$ is reduced $\textrm{G}_C$-perfect of $\textrm{G}_C$-dimension $t+1$ and $\textrm{Ext}_{R}^{t+1}
(\textrm{Tr}_C (M), C)$ is $\textrm{G}_C$-perfect of
$\textrm{G}_C$-dimension $n+t$.
\end{itemize}

As seen in Remark \ref{obs.2.11.sadeghi}, there is an exact sequence
\begin{equation}\label{eq3.seg.exata.lambidaM.e.TrC}
0 \rightarrow \lambda M \otimes_R C \rightarrow P \otimes_R C
\rightarrow \textrm{Tr}_C (M) \rightarrow 0,
\end{equation}
where $P$ is a finite projective $R$-module. By \cite[Remark 2.15(i)]{link2016} we have $\textrm{Ext}_{R}^{1}
(\textrm{Tr}_C (M), C) = 0$, because $M$ is  horizontally
linked. Also, recall that
$\textrm{G}_C\textrm{-dim}_R(P \otimes_R C) = 0$ (see the proof of Lemma \ref{obs.syzygy}(ii)). Thus, combining (\ref{eq3.seg.exata.lambidaM.e.TrC}) with Proposition
\ref{prop.item6.golod} and Proposition \ref{prop.item2.golod}, we get the equivalence
$$0<\textrm{G}_C\textrm{-dim}_R(\lambda M \otimes_R C) < \infty
\, \Leftrightarrow \, 0<\textrm{G}_C\textrm{-dim}_R(\textrm{Tr}_C
(M)) < \infty.$$ In this case, by Proposition
\ref{prop.lema.1.9.gerko}, 
\begin{equation}\label{eq.Gcdim123}
\textrm{G}_C\textrm{-dim}_R(\lambda M \otimes_R C)
\, = \, \textrm{G}_C\textrm{-dim}_R(\textrm{Tr}_C (M)) - 1.
\end{equation}
Since $\textrm{Ext}_{R}^{1}
(\textrm{Tr}_C (M), C) = 0$, we have 
$\textrm{r.grade}_R(\textrm{Tr}_C (M), C)>1$ and then, by Lemma \ref{lema.formula.do.rgrade.geral}, \begin{equation}\label{eq2.Gcdim123}
\textrm{r.grade}_R(\lambda M \otimes_R C, C)
\, = \, \textrm{r.grade}_R(\textrm{Tr}_C (M), C) - 1.
\end{equation}
Now it is clear, by (\ref{eq.Gcdim123}) and (\ref{eq2.Gcdim123}), that $\textrm{Tr}_C (M)$ is reduced $\textrm{G}_C$-perfect of
$\textrm{G}_C$-dimension $t+1$ if and only if $\lambda M \otimes_R
C$ is reduced $\textrm{G}_C$-perfect of $\textrm{G}_C$-dimension
$t$. Finally, since $t >0$, the sequence
(\ref{eq3.seg.exata.lambidaM.e.TrC}) gives, in particular,
$\textrm{Ext}_{R}^{t} (\lambda M \otimes_R C, C)  \cong 
\textrm{Ext}_{R}^{t+1} (\textrm{Tr}_C (M), C)$. The desired equivalence (ii)$\Leftrightarrow$(iii) follows. \qed
\end{proof}

\section{Horizontal linkage and (reduced) $\textrm{G}_C$-perfection}\label{sec33}

Besides Corollary \ref{cor.3.16.paper3.generalizado}, much more can be said about the connection between (reduced) $\textrm{G}_C$-perfection and horizontal linkage. This is our goal in this section. In particular, we will generalize some
results of \cite{link2004} and
\cite{link2013}, the latter concerning the problem of characterizing horizontally linked modules.

\begin{lemma}\label{lema.condição.do.grade}
If $M$ is a non-zero horizontally linked $R$-module, then $\textrm{\emph{grade}}_R(M) = 0$.
\end{lemma}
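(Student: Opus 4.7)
The plan is to use the definition of horizontal linkage together with the characterization of grade zero in terms of $\Hom_R(M,R)$. The conclusion $\textrm{grade}_R(M)=0$ is equivalent, for non-zero $M$, to the non-vanishing of $M^{*}=\Hom_R(M,R)$, so the goal becomes producing a non-zero homomorphism $M \to R$.

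First I would unwind the hypothesis: by Definition \ref{def.LH}, $M$ being horizontally linked means $M \cong \lambda^{2}M = \lambda(\lambda M) = \Omega\,\textrm{Tr}(\lambda M)$. By the very definition of the syzygy operator $\Omega$ (which is legitimate because $R$ is semiperfect and projective covers exist), $\Omega\,\textrm{Tr}(\lambda M)$ sits inside the projective cover of $\textrm{Tr}(\lambda M)$. Thus $M$ embeds in some finite projective $R$-module $P$, and since every finite projective $R$-module is a direct summand of a finite free $R$-module, I get an injection $M \hookrightarrow R^{n}$ for some $n \geq 1$.

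Next, since $M \neq 0$, at least one of the coordinate projections $\pi_i \colon R^{n} \to R$ must restrict to a non-zero homomorphism $M \to R$; otherwise $M$ would lie in the intersection of the kernels of all $\pi_i$, which is zero. Hence $M^{*} \neq 0$, which by definition of grade gives $\textrm{grade}_R(M) = 0$.

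There is essentially no obstacle here; the argument is completely elementary once one observes that horizontally linked modules are $R$-syzygies (a point which could also be extracted from Lemma \ref{obs.syzygy}(ii) and Theorem \ref{teo2.paper1}, since horizontal linkage forces $\textrm{Ext}_R^{1}(\textrm{Tr}(M),R)=0$). The only subtlety worth flagging is the use of semiperfection to ensure a bona fide minimal projective presentation exists, so that $\lambda M = \Omega\,\textrm{Tr}(M)$ is well-defined up to isomorphism and the identity $M \cong \Omega\,\textrm{Tr}(\lambda M)$ can be invoked without ambiguity.
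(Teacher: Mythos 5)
Your proof is correct, but it takes a different route from the paper's. You argue directly: since $M \cong \lambda^{2}M = \Omega\,\textrm{Tr}(\lambda M)$ is by construction a submodule of a finite projective module, $M$ embeds in some $R^{n}$, and a non-zero submodule of a free module admits a non-zero coordinate projection to $R$, so $M^{*} \neq 0$ and $\textrm{grade}_R(M)=0$. The paper instead argues by contradiction: if $\textrm{grade}_R(M)>0$ then $M^{*}=0$, hence $M^{**}=0$, and the exact sequence of Proposition \ref{prop.prop.2.6.geng}(i) (with $C=R$) forces $\textrm{Ext}_{R}^{1}(\textrm{Tr}(M),R)\cong M\neq 0$, contradicting Theorem \ref{teo2.paper1}. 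The two arguments are really two faces of the same fact --- the vanishing of $\textrm{Ext}_{R}^{1}(\textrm{Tr}(M),R)$ is equivalent to $M$ being an $R$-syzygy (Lemma \ref{obs.syzygy}(ii) with $C=R$), as you note --- but your version is more elementary in that it needs neither the Martsinkovsky--Strooker characterization of horizontal linkage nor the four-term exact sequence, only the definition $M\cong\lambda^{2}M$ and the fact that $\Omega$ produces submodules of projectives. Your flagging of semiperfection as the hypothesis making $\lambda$ well defined is also the right point to single out.
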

\begin{proof} Suppose $\textrm{{grade}}_R(M) >
0$. Then $M^*=0$, hence $M^{**}=0$. By Proposition \ref{prop.prop.2.6.geng}, there is an isomorphism 
$\textrm{Ext}_{R}^1 (\textrm{Tr}(M), R) \cong M$, so that $\textrm{Ext}_{R}^1 (\textrm{Tr}(M), R)\neq 0$, which contradicts Theorem \ref{teo2.paper1}.  \qed
\end{proof}

\medskip

This lemma immediately gives the following.

\begin{corollary}\label{cor.condição.do.grade.generalizada}
If $M$ is a non-zero horizontally linked 
$\textrm{\emph{G}}_C$-perfect $R$-module, then $\gcdimit (M)=0$.
\end{corollary}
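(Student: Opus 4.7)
The plan is very short because this follows directly by chaining the two relevant facts. First, I would invoke Lemma \ref{lema.condição.do.grade}: since $M$ is a non-zero horizontally linked $R$-module, we immediately get $\textrm{grade}_R(M) = 0$.

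Next, I would use the very definition of $\textrm{G}_C$-perfection, namely $\textrm{grade}_R(M) = \textrm{G}_C\textrm{-dim}_R(M)$, to conclude that $\textrm{G}_C\textrm{-dim}_R(M) = 0$, as required.

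There is essentially no obstacle; the corollary is just the combination of the preceding lemma with the definition of $\textrm{G}_C$-perfect module recalled right after Definition \ref{def.rgrade.com.respeito.C}. The only thing worth double-checking is that Lemma \ref{lema.condição.do.grade} genuinely applies here, which it does because $M$ is assumed to be non-zero and horizontally linked (no further hypotheses are needed for the lemma).
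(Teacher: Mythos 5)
Your proof is correct and is exactly the paper's argument: the corollary is presented there as an immediate consequence of the preceding lemma (which gives $\textrm{grade}_R(M)=0$ for a non-zero horizontally linked module) combined with the defining equality $\textrm{grade}_R(M)=\textrm{G}_C\textrm{-dim}_R(M)$ for $\textrm{G}_C$-perfect modules. Nothing further is needed.
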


In order to produce a reciprocal of Corollary
\ref{cor.condição.do.grade.generalizada} we need the notion of
Auslander class with respect to $C$, which was introduced in  \cite{foxby72}. Notice that taking $C=R$ trivializes the concept in the sense that every $R$-module lies in the Auslander class with respect to $R$.

\begin{definition}\label{def.classe.de.Auslander} \rm
The \emph{Auslander class with respect to} $C$, denoted by $\mathcal{A}_C$, consists of all $R$-modules $M$ satisfying the following conditions:
\begin{itemize}
             \item[(i)] The natural map $\mu \colon M \rightarrow \textrm{Hom}_{R} (C, M \otimes_R
             C)$ is an isomorphism;
             \item[(ii)] $\textrm{Tor}^{R}_i (C, M) = 0 = \textrm{Ext}_{R}^i (C, M \otimes_R C)$ for all $i > 0$.
\end{itemize}
\end{definition}

\begin{example}\label{ex.classe.de.Auslander}\rm \begin{itemize}
  \item[(i)] If any two $R$-modules in a short exact sequence are in $\mathcal{A}_C$, then so is the third
one. Every module of finite flat dimension lies in $\mathcal{A}_C$. We refer to \cite[1.9]{TakWhi2010}.
  
  \item[(ii)] If $M$ is a finite module over a Cohen-Macaulay local ring $R$ with canonical module $\omega_R$, then $M \in \mathcal{A}_{\omega_R}$
  if and only if $\textrm{G-dim}_R(M) < \infty$ (see \cite[Theorem 1]{foxby75}).
\end{itemize}
\end{example}

\begin{remark}\label{obs.2.15.ii.sadeghi} \rm
Let $M$ be a finite $R$-module such that $\textrm{Tr}(M) \in
\mathcal{A}_C$. Then
$$\textrm{Ext}_{R}^i (\textrm{Tr}_C (M),\,C) \, \cong \, \textrm{Ext}_{R}^i(\textrm{Tr}(M),\,R),$$ for all $i\geq 0$ (see \cite[Remark 2.15 (ii)]{link2016}).
\end{remark}

\begin{theorem}\label{prop.teo1.paper1.generalizado}
Let $M$ be a stable $R$-module such that $\gcdimit
(M)=0$ and $\lambda M \in \mathcal{A}_C$. Then $M$ is
horizontally linked, $\lambda M$ is a stable $R$-module and
$\gcdimit
(\lambda M \otimes_R C)=0$.
\end{theorem}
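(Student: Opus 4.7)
The plan is to generalize the argument for the classical $C=R$ case (Theorem 1 of \cite{link2004}) by routing everything through the Auslander class hypothesis, which lets one exchange $\mathrm{Ext}$ and $\mathrm{Tor}$ with respect to $R$ and with respect to $C$. First, I would start from the defining short exact sequence
$$0 \rightarrow \lambda M \rightarrow P \rightarrow \mathrm{Tr}(M) \rightarrow 0$$
coming from a minimal projective presentation of $M$, with $P$ a finite projective $R$-module. Since $P$ has finite flat dimension it lies in $\mathcal{A}_C$ by Example \ref{ex.classe.de.Auslander}(i), and $\lambda M \in \mathcal{A}_C$ by hypothesis, so the two-out-of-three property of the Auslander class yields $\mathrm{Tr}(M) \in \mathcal{A}_C$. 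This is the pivotal upgrade that makes everything else work.

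Next, I would use this to show that $M$ is horizontally linked. Because $\gcdim(M)=0$, Proposition \ref{prop.prop.2.6.geng}(iii) gives $\gcdim(\mathrm{Tr}_C(M))=0$, hence $\mathrm{Ext}_R^i(\mathrm{Tr}_C(M),C)=0$ for all $i>0$. Combining with the comparison isomorphism from Remark \ref{obs.2.15.ii.sadeghi} (valid precisely because $\mathrm{Tr}(M)\in\mathcal{A}_C$), we obtain
$$\mathrm{Ext}_R^i(\mathrm{Tr}(M),R) \, \cong \, \mathrm{Ext}_R^i(\mathrm{Tr}_C(M),C) \, = \, 0$$
for all $i>0$; in particular $\mathrm{Ext}_R^1(\mathrm{Tr}(M),R)=0$. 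Since $M$ is already stable by assumption, Theorem \ref{teo2.paper1} forces $M$ to be horizontally linked.

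For the remaining two conclusions, once $M$ is horizontally linked we have $M\cong \lambda^2 M$. Applying $\lambda$ yields $\lambda M \cong \lambda^2(\lambda M)$, so $\lambda M$ is itself horizontally linked and hence stable by the other direction of Theorem \ref{teo2.paper1}. Finally, to handle $\gcdim(\lambda M \otimes_R C)$, I would tensor the defining exact sequence with $C$: since $\mathrm{Tr}(M)\in\mathcal{A}_C$ we have $\mathrm{Tor}_1^R(\mathrm{Tr}(M),C)=0$, so the sequence remains exact, giving (after identifying $\mathrm{Tr}(M)\otimes_R C\cong \mathrm{Tr}_C(M)$ via Proposition \ref{lema.tr.e.trC} for a common presentation)
$$0 \rightarrow \lambda M \otimes_R C \rightarrow P\otimes_R C \rightarrow \mathrm{Tr}_C(M)\rightarrow 0.$$
Both $P\otimes_R C\in \mathrm{add}_R(C)$ and $\mathrm{Tr}_C(M)$ have $\gcdim=0$, so Proposition \ref{prop.item2.golod} finishes the proof that $\gcdim(\lambda M \otimes_R C)=0$.

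The main conceptual obstacle is exactly the first step: transferring membership in $\mathcal{A}_C$ from $\lambda M$ to $\mathrm{Tr}(M)$ so that Remark \ref{obs.2.15.ii.sadeghi} becomes available. Everything afterwards is essentially bookkeeping with the long exact sequence of $\mathrm{Ext}$ and with totally $C$-reflexive modules; the delicate point is ensuring the Auslander class hypothesis is strong enough to allow both the $\mathrm{Ext}$-comparison needed for horizontal linkage and the $\mathrm{Tor}$-vanishing needed to preserve exactness under $-\otimes_R C$. Both follow cleanly from Definition \ref{def.classe.de.Auslander}, so the proof should be short once the first step is correctly set up.
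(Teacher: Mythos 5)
Your proof is correct and follows essentially the same route as the paper: transfer $\lambda M \in \mathcal{A}_C$ to $\mathrm{Tr}(M) \in \mathcal{A}_C$ by the two-out-of-three property, combine $\textrm{G}_C\textrm{-dim}_R(\mathrm{Tr}_C(M))=0$ with Remark \ref{obs.2.15.ii.sadeghi} to get $\mathrm{Ext}_R^1(\mathrm{Tr}(M),R)=0$ and hence horizontal linkage via Theorem \ref{teo2.paper1}, and then use $\mathrm{Tor}_1^R(\mathrm{Tr}(M),C)=0$ to obtain the exact sequence $0\to\lambda M\otimes_R C\to P\otimes_R C\to\mathrm{Tr}_C(M)\to 0$ and conclude with Proposition \ref{prop.item2.golod}. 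The only (harmless) deviation is the stability of $\lambda M$: the paper cites \cite[Proposition 4]{link2004}, whereas you derive it directly from $M\cong\lambda^2 M$ and Theorem \ref{teo2.paper1}, which is equally valid since $\lambda$ is well-defined on isomorphism classes over a semiperfect ring.
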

\begin{proof}
Since $\textrm{G}_{C}\textrm{-dim}_R(M) = 0$, we get
$\textrm{G}_{C}\textrm{-dim}_R(\textrm{Tr}_C (M)) = 0$ by virtue of Proposition \ref{prop.prop.2.6.geng}. In particular,
$\textrm{Ext}_{R}^1 (\textrm{Tr}_C (M), C) = 0$. There is a short
exact sequence $0 \rightarrow \lambda M \rightarrow P \rightarrow
\textrm{Tr}(M) \rightarrow 0$, where $P$ is a projective
$R$-module. As $\lambda M \in \mathcal{A}_C$, it follows by
Example \ref{ex.classe.de.Auslander}(i) that $\textrm{Tr}(M) \in
\mathcal{A}_C$. Hence, by Remark \ref{obs.2.15.ii.sadeghi},
$\textrm{Ext}_{R}^1 (\textrm{Tr}(M), R) = 0$. By Theorem
\ref{teo2.paper1}, $M$ is horizontally linked and then, using
\cite[Proposition 4]{link2004}, we obtain that $\lambda M$ is a stable $R$-module. Now note that $\textrm{Tor}^{R}_1 (\textrm{Tr}(M), C) = 0$, because $\textrm{Tr}(M) \in \mathcal{A}_C$. Thus, by the exact sequence
$0 \rightarrow \lambda M \otimes_R C \rightarrow P \otimes_R C
\rightarrow \textrm{Tr}_C (M) \rightarrow 0$ seen in Remark
\ref{obs.2.11.sadeghi}, we conclude by Proposition
\ref{prop.item2.golod} that $\lambda M \otimes_R C$ has $\textrm{G}_C$-dimension zero. \qed
\end{proof}

\medskip

We point out that Theorem \ref{prop.teo1.paper1.generalizado} is a generalization of \cite[Theorem
1]{link2004}. Next, we provide a number of consequences, the first of which extends \cite[Corollary
2]{link2004} (which requires the ring to be Gorenstein) to the class of Cohen-Macaulay local rings with canonical module.

\begin{corollary}\label{cor.cor2.paper1.generalizado}
Let $R$ be a Cohen-Macaulay local ring with canonical module
$\omega_R$, and let $M$ be a stable maximal Cohen-Macaulay $R$-module. Assume that $\textrm{\emph{G-dim}}_R(\lambda M) < \infty$. Then $M$ is horizontally linked and $\lambda M$ is again a stable maximal Cohen-Macaulay $R$-module.
\end{corollary}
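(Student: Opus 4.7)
The plan is to apply Theorem \ref{prop.teo1.paper1.generalizado} with the semidualizing module $C = \omega_R$. Three conditions must be verified: that $M$ is stable (which is given), that $\textrm{G}_{\omega_R}\textrm{-dim}_R(M) = 0$, and that $\lambda M$ lies in the Auslander class $\mathcal{A}_{\omega_R}$. For the second, since $\omega_R$ is dualizing, Proposition \ref{prop.prop.1.3.gerko} ensures that $\textrm{G}_{\omega_R}\textrm{-dim}_R(M) < \infty$, and the Auslander-Bridger type formula (Theorem \ref{teo.propriedades.Gcdimensao}) then gives
$$\textrm{G}_{\omega_R}\textrm{-dim}_R(M) \, = \, \textrm{depth}(R) - \textrm{depth}_R(M) \, = \, 0,$$
because $M$ is maximal Cohen-Macaulay and $R$ is Cohen-Macaulay. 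For the third, since $\textrm{G-dim}_R(\lambda M) < \infty$ by hypothesis, Example \ref{ex.classe.de.Auslander}(ii) yields $\lambda M \in \mathcal{A}_{\omega_R}$.

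With the hypotheses verified, Theorem \ref{prop.teo1.paper1.generalizado} immediately delivers that $M$ is horizontally linked, that $\lambda M$ is a stable $R$-module, and that $\textrm{G}_{\omega_R}\textrm{-dim}_R(\lambda M \otimes_R \omega_R) = 0$. The only remaining task is to promote the last equality to the statement that $\lambda M$ itself is maximal Cohen-Macaulay, which is where I expect the main (if mild) obstacle to lie.

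To carry out this last step, I would exploit that $\lambda M \in \mathcal{A}_{\omega_R}$ forces $\textrm{Tor}_i^R(\omega_R, \lambda M) = 0$ for all $i > 0$. Invoking \cite[Theorem 1.2]{Auslander} as in the proof of Corollary \ref{cor.cor.Gcperf.redu.formula.do.depth.generalizado2}, together with the fact that $\textrm{depth}_R(\omega_R) = \textrm{depth}(R)$ (since $\omega_R$ is maximal Cohen-Macaulay), one obtains
$$\textrm{depth}_R(\lambda M \otimes_R \omega_R) \, = \, \textrm{depth}_R(\lambda M) + \textrm{depth}_R(\omega_R) - \textrm{depth}(R) \, = \, \textrm{depth}_R(\lambda M).$$
On the other hand, from $\textrm{G}_{\omega_R}\textrm{-dim}_R(\lambda M \otimes_R \omega_R) = 0$ and Theorem \ref{teo.propriedades.Gcdimensao}, the left-hand side equals $\textrm{depth}(R) = \dim R$. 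Hence $\textrm{depth}_R(\lambda M) = \dim R$, i.e., $\lambda M$ is maximal Cohen-Macaulay, completing the argument.
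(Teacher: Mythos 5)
Your overall route is the same as the paper's: verify the hypotheses of Theorem \ref{prop.teo1.paper1.generalizado} with $C=\omega_R$ (stability is given, $\textrm{G}_{\omega_R}\textrm{-dim}_R(M)=0$ follows from Proposition \ref{prop.prop.1.3.gerko} and Theorem \ref{teo.propriedades.Gcdimensao} since $M$ is maximal Cohen--Macaulay, and $\lambda M\in\mathcal{A}_{\omega_R}$ follows from Example \ref{ex.classe.de.Auslander}(ii)), and then upgrade the conclusion $\textrm{G}_{\omega_R}\textrm{-dim}_R(\lambda M\otimes_R\omega_R)=0$ to the statement that $\lambda M$ itself is maximal Cohen--Macaulay via the depth equality $\textrm{depth}_R(\lambda M)=\textrm{depth}_R(\lambda M\otimes_R\omega_R)$. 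That is exactly the skeleton of the paper's proof.

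The one step that is not adequately justified is your derivation of that depth equality from \cite[Theorem 1.2]{Auslander}. Auslander's depth formula requires one of the two modules being tensored to have finite projective dimension; in Corollary \ref{cor.cor.Gcperf.redu.formula.do.depth.generalizado2} this is available because there $M^*$, hence $\lambda M$, has finite projective dimension by hypothesis, but in the present corollary you only know $\textrm{G-dim}_R(\lambda M)<\infty$, and $\omega_R$ has finite projective dimension only when $R$ is Gorenstein. So the cited theorem simply does not apply here. The equality you want is nevertheless true: it is precisely the depth formula for modules in the Auslander class, which is what the paper invokes at this point (\cite[Lemma 2.16(i)]{link2016}); replacing your citation by that one repairs the argument. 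A second, smaller omission: before reading off $\textrm{depth}_R(\lambda M\otimes_R\omega_R)=\textrm{depth}(R)$ from Theorem \ref{teo.propriedades.Gcdimensao} you need $\lambda M\otimes_R\omega_R\neq 0$, i.e.\ $\lambda M\neq 0$; the paper secures this from the stability of $M$ via \cite[Theorem 32.13 and Corollary 32.14]{fuller}, and you should record it as well.
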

\begin{proof}
By Example \ref{ex.classe.de.Auslander}(ii), $\lambda M \in
\mathcal{A}_{\omega_R}$. By \cite[Lemma 2.16(i)]{link2016}, we get
$\textrm{depth}_R(\lambda M) = \textrm{depth}_R(\lambda M
\otimes_R \omega_R)$. Since $M$ is stable, it follows by \cite[Theorem 32.13 and Corollary
32.14]{fuller} that $\lambda M \neq 0$. Thus, by Proposition \ref{prop.prop.1.3.gerko} and
Theorem \ref{teo.propriedades.Gcdimensao}, we have
$$\textrm{G}_{\omega_R}\textrm{-dim}_R(\lambda M) \, = \,
\textrm{G}_{\omega_R}\textrm{-dim}_R(\lambda M \otimes_R
\omega_R).$$ Also note that $\textrm{G}_{\omega_R}\textrm{-dim}_R(M) =0$. Now the assertion is clear by Theorem \ref{prop.teo1.paper1.generalizado}. \qed
\end{proof}

\begin{corollary}\label{cor.prop.reciproca.do.cor.condicao.do.grade} Let $M$ be a $\textrm{\emph{G}}_C$-perfect $R$-module such that
$\lambda M \in \mathcal{A}_C$. Then, $M$ is horizontally linked if
and only if $M$ is stable with  $\gcdimit (M)=0$.
\end{corollary}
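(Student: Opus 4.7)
The plan is to read off both directions from results already established in the paper, so the argument will be brief and will amount to a careful bookkeeping of which hypotheses feed into which previous statement.

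For the forward implication, assume $M$ is horizontally linked. If $M=0$ there is nothing to show (the zero module is trivially stable of $\textrm{G}_C$-dimension $0$), so suppose $M\neq 0$. By Theorem \ref{teo2.paper1}, horizontally linked modules are stable; in particular $M$ is stable. Since $M$ is moreover $\textrm{G}_C$-perfect, Corollary \ref{cor.condição.do.grade.generalizada} applies and forces $\textrm{G}_C\textrm{-dim}_R(M)=0$. Note that the hypothesis $\lambda M \in \mathcal{A}_C$ is not needed here.

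For the backward implication, assume $M$ is stable with $\textrm{G}_C\textrm{-dim}_R(M)=0$. Because $\lambda M \in \mathcal{A}_C$ by assumption, Theorem \ref{prop.teo1.paper1.generalizado} applies verbatim and yields that $M$ is horizontally linked (in fact, it also gives that $\lambda M$ is stable and that $\lambda M \otimes_R C$ has $\textrm{G}_C$-dimension zero, but only the horizontal linkage assertion is needed here).

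There is essentially no obstacle, since both directions are immediate consequences of previously established results; the only care required is to observe that the extra hypothesis $\lambda M \in \mathcal{A}_C$ is used only in one direction, and to handle the trivial case $M=0$ in the other. No additional computation beyond invoking Theorem \ref{teo2.paper1}, Corollary \ref{cor.condição.do.grade.generalizada}, and Theorem \ref{prop.teo1.paper1.generalizado} is required.
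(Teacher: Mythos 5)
Your proof is correct and follows essentially the same route as the paper: the forward direction combines stability of horizontally linked modules with Corollary \ref{cor.condição.do.grade.generalizada}, and the converse is exactly Theorem \ref{prop.teo1.paper1.generalizado}. The only cosmetic difference is that the paper cites \cite[Proposition 3]{link2004} for stability where you invoke Theorem \ref{teo2.paper1}; both work.
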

\begin{proof}
If $M$ is horizontally linked, then
$\textrm{G}_C\textrm{-dim}_R(M)=0$ and $M$ is stable, by
Corollary \ref{cor.condição.do.grade.generalizada} and
\cite[Proposition 3]{link2004}, respectively. The converse is given by Theorem \ref{prop.teo1.paper1.generalizado}. \qed
\end{proof}

\begin{corollary}\label{cor3.caracterizacao.LH.TR.generalizado}
Let $R$ be a Cohen-Macaulay local ring with canonical module
$\omega_R$, and let $M$ be a non-zero Cohen-Macaulay $R$-module such that $\textrm{\emph{G-dim}}_R(\lambda M) < \infty $. Then, $M$ is horizontally linked if and only if $M$ is stable maximal Cohen-Macaulay.
\end{corollary}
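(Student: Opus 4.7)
\medskip

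\noindent \textbf{Proof plan.} My plan is to obtain each implication essentially from the results already established in the paper, with the extra information coming from the Cohen-Macaulay hypotheses.

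For the ``only if'' direction, suppose $M$ is horizontally linked. First, by \cite[Proposition 3]{link2004} (invoked earlier in the proof of Corollary \ref{cor.prop.reciproca.do.cor.condicao.do.grade}), $M$ is stable. It remains to see that $M$ is maximal Cohen-Macaulay. Since $M$ is horizontally linked and non-zero, Lemma \ref{lema.condição.do.grade} gives $\textrm{grade}_R(M) = 0$. Now I would use the standard fact that for a non-zero Cohen-Macaulay module $M$ over a Cohen-Macaulay local ring $R$ one has $\textrm{grade}_R(M) = \dim R - \dim_R M$. Combining these yields $\dim_R M = \dim R$, and then the Cohen-Macaulayness of $M$ forces $\textrm{depth}_R(M) = \dim_R M = \dim R = \textrm{depth}(R)$, so $M$ is maximal Cohen-Macaulay.

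For the ``if'' direction, assume $M$ is stable maximal Cohen-Macaulay. Since we are furthermore assuming $\textrm{G-dim}_R(\lambda M) < \infty$, Corollary \ref{cor.cor2.paper1.generalizado} applies directly and gives that $M$ is horizontally linked (and, moreover, that $\lambda M$ is also stable maximal Cohen-Macaulay, which is a bonus piece of information not explicitly part of the statement).

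The only point requiring any care, and what I expect to be the sole potential obstacle, is the appeal to the equality $\textrm{grade}_R(M) = \dim R - \dim_R M$ for a Cohen-Macaulay module over a Cohen-Macaulay local ring. This is classical (see, e.g., \cite[Theorem 2.1.2 and Proposition 1.2.10]{CMr}), but since the paper works with general semiperfect rings in many places, I would explicitly reduce to the local case here (which is given by hypothesis) so that no ambiguity arises. Everything else is a direct citation of previously established results, and the reverse implication is essentially just the statement of Corollary \ref{cor.cor2.paper1.generalizado}.
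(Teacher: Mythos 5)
Your proof is correct and follows essentially the same route as the paper's: the paper also converts ``maximal Cohen--Macaulay'' into ``$\textrm{grade}_R(M)=0$'' via the identity $\textrm{grade}_R(M)=\dim(R)-\dim_R(M)=\textrm{G}_{\omega_R}\textrm{-dim}_R(M)$ over the Cohen--Macaulay local ring $R$, and then invokes the linkage criterion built on Theorem \ref{prop.teo1.paper1.generalizado} (the paper packages both directions into a single appeal to Corollary \ref{cor.prop.reciproca.do.cor.condicao.do.grade} after checking $\lambda M\in\mathcal{A}_{\omega_R}$, whereas you handle the forward direction directly from Lemma \ref{lema.condição.do.grade} and the backward one from Corollary \ref{cor.cor2.paper1.generalizado}, which amounts to the same underlying argument).
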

\begin{proof}
By Proposition \ref{prop.prop.1.3.gerko}, $M$ has finite
$\textrm{G}_{\omega_R}$-dimension. As $\textrm{depth}(R) =
\dim (R)$ and $\textrm{depth}_R(M) = \textrm{dim}_R(M)$,
it follows, by Theorem \ref{teo.propriedades.Gcdimensao} and
\cite[Corollary 2.1.4]{CMr}, that
$$\textrm{G}_{\omega_R}\textrm{-dim}_R(M) \, = \, \dim (R) - \textrm{dim}_R(M) \, = \,
\textrm{ht}(\textrm{ann}_R(M)) \, = \, \textrm{grade}_R(M),$$ where $\textrm{ht}(\textrm{ann}_R(M))$ stands for the height of the annihilator $\textrm{ann}_R(M)$ of $M$ -- this coincides with $\textrm{grade}_R(M)$ because $R$ is Cohen-Macaulay.
It follows that $M$ is $\textrm{G}_{\omega_R}$-perfect. Note that
$\textrm{G}_{\omega_R}\textrm{-dim}_R(M) = 0$ if and only if
$M$ is maximal Cohen-Macaulay. Since $\textrm{G-dim}_R(\lambda
M) < \infty$, we have $\lambda M \in \mathcal{A}_{\omega_R}$ by
Example \ref{ex.classe.de.Auslander}(ii). Now Corollary
\ref{cor.prop.reciproca.do.cor.condicao.do.grade} yields the result. \qed
\end{proof}

\medskip

In the Gorenstein case (so that every finite module has finite G-dimension) we immediately get the following.

\begin{corollary}\label{cor3.caracterizacao.LH.TR.generalizado}
Let $R$ be a Gorenstein local ring and let $M$ be a non-zero Cohen-Macaulay $R$-module. Then, $M$ is horizontally linked if and only if $M$ is stable maximal Cohen-Macaulay.
\end{corollary}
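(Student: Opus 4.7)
The plan is to derive this as an immediate specialization of the previous corollary (\ref{cor3.caracterizacao.LH.TR.generalizado}, the one just above it, for Cohen--Macaulay local rings with canonical module). First I would recall that a Gorenstein local ring $R$ is, in particular, Cohen--Macaulay and admits a canonical module; indeed one may take $\omega_R = R$. Thus the ambient hypotheses of the preceding corollary are automatically satisfied.

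Next, I would invoke the classical fact that over a Gorenstein local ring every non-zero finite $R$-module has finite $\mathrm{G}$-dimension. In the language of the paper, this is Proposition \ref{prop.prop.1.3.gerko} applied with $C=R$, since $R$ is dualizing for itself in the Gorenstein case. Consequently the technical assumption $\mathrm{G\text{-}dim}_R(\lambda M) < \infty$ of the preceding corollary is automatic for any finite $R$-module, and in particular for the given non-zero Cohen--Macaulay module $M$.

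Having verified both hypotheses, the proof reduces to a direct citation of Corollary \ref{cor3.caracterizacao.LH.TR.generalizado} (the Cohen--Macaulay-with-canonical-module version), which yields the stated equivalence: $M$ is horizontally linked if and only if $M$ is a stable maximal Cohen--Macaulay $R$-module. There is essentially no obstacle here; the only thing to be careful about is to cite the correct preceding corollary, noting that the two consecutive statements share the same label \verb|cor3.caracterizacao.LH.TR.generalizado| in the source, so in practice I would refer to the Cohen--Macaulay version proved just above.
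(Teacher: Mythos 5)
Your proposal is correct and matches the paper's own (implicit) argument exactly: the paper states this corollary immediately after the Cohen--Macaulay version with the one-line justification that over a Gorenstein local ring every finite module has finite G-dimension, so the hypothesis $\mathrm{G\text{-}dim}_R(\lambda M)<\infty$ is automatic and the previous corollary applies with $\omega_R=R$.
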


The next three results are of self-interest and will furthermore be useful in the proof of Theorem \ref{teo.prop.3.5ii.paper3.generalizada}.

\begin{proposition}\label{lema.condição.suficiente.para.EXT.TrC.nulo} Let $M$ be a reduced $\textrm{\emph{G}}_C$-perfect $R$-module of $\textrm{\emph{G}}_C$-dimension $n$. Assume that
$\textrm{\emph{grade}}_R(\textrm{\emph{Ext}}_R^n(M,
C))\geq n+1$. Then\, $\textrm{\emph{Ext}}_{R}^1
(\textrm{\emph{Tr}}_C (M), C) = 0$.
\end{proposition}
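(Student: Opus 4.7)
The plan is to apply Proposition \ref{prop.Gperf.redu.formula.do.rgrade.e.gradeC} directly and read off the conclusion from the hypothesis on the grade of $\textrm{Ext}_R^n(M, C)$. Since $M$ is reduced $\textrm{G}_C$-perfect of $\textrm{G}_C$-dimension $n$, by Definition \ref{def.rgrade.com.respeito.C} we have $\textrm{r.grade}_R(M, C) = n$. Substituting this into the formula of Proposition \ref{prop.Gperf.redu.formula.do.rgrade.e.gradeC} yields
$$n + \textrm{r.grade}_R(\textrm{Tr}_C(M), C) \, = \, \textrm{grade}_R(\textrm{Ext}_R^n(M, C)) + 1.$$

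Using the standing hypothesis $\textrm{grade}_R(\textrm{Ext}_R^n(M, C)) \geq n+1$, the displayed equality gives
$$\textrm{r.grade}_R(\textrm{Tr}_C(M), C) \, \geq \, 2.$$
By the very definition of the reduced grade with respect to $C$, this inequality forces $\textrm{Ext}_R^1(\textrm{Tr}_C(M), C) = 0$, which is the desired vanishing.

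The proof is essentially a one-line consequence of the previously established formula, so no real obstacle is anticipated. The only mild subtlety is to remember that $\textrm{Ext}_R^1(\textrm{Tr}_C(M), C)$ is well-defined independently of the chosen projective presentation, as noted in Remark \ref{obs.TrC} and Remark \ref{obs.rgrade.com.respeito.C}(iv), so the statement is unambiguous.
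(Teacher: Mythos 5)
Your proof is correct. The only point worth checking is that the equality of Proposition \ref{prop.Gperf.redu.formula.do.rgrade.e.gradeC} is valid with the convention that both sides may be infinite (it is, by the paper's proof of that proposition), so the inequality $\textrm{r.grade}_R(\textrm{Tr}_C(M),C)\geq 2$ and hence the vanishing of $\textrm{Ext}_R^1(\textrm{Tr}_C(M),C)$ follow as you say.

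Your route differs slightly from the paper's: the authors apply Proposition \ref{prop.EXT.TrC} directly with $i=1$ to get $\textrm{Ext}_R^1(\textrm{Tr}_C(M),C)\cong \textrm{Ext}_R^n(\textrm{Ext}_R^n(M,C),C)$, and then kill the right-hand side using the grade hypothesis together with Lemma \ref{lema.grade.e.gradeC}. You instead invoke Proposition \ref{prop.Gperf.redu.formula.do.rgrade.e.gradeC} and do arithmetic with reduced grades. Since that proposition is itself deduced from Proposition \ref{prop.EXT.TrC}, the two arguments rest on the same underlying isomorphism; yours packages the bookkeeping into the already-established numerical formula, while the paper's is marginally more direct and makes the responsible Ext module explicit. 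Either is a legitimate one-line deduction from the section's earlier results.
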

\begin{proof} By the hypothesis on the grade, we have $\textrm{Ext}_R^n(\textrm{Ext}_R^n(M, C), C) = 0$. On the other hand, applying Proposition
\ref{prop.EXT.TrC} (with $i=1$), we get an isomorphism $\textrm{Ext}_{R}^{n}(\textrm{Ext}_{R}^n (M, C), C) \cong
\textrm{Ext}_{R}^{1} (\textrm{Tr}_C (M), C)$. The result follows. \qed
\end{proof}

\medskip

In Proposition \ref{prop.4.16.stablemodule.generalizada} we showed that if $M$ is a finite $R$-module of finite $\textrm{\emph{G}}_C$-dimension, then
$\textrm{{grade}}_R(\textrm{{Ext}}_{R}^i(M, C))  \geq  i$ for all $i > 0$. This can be improved if moreover $M$ is horizontally linked, as follows.

\begin{proposition} \label{prop.LH.Gcdim.finita.implica.gradeEXT.maior.n}
Let $M$ be a horizontally linked $R$-module of finite
$\textrm{\emph{G}}_C$-dimension. Then, for any given $i > 0$,
$$\textrm{\emph{grade}}_R(\textrm{\emph{Ext}}_{R}^i (M,\,C)) \, \geq \, i + 1.$$ 
\end{proposition}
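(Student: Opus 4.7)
The plan is to refine the localization argument in the proof of Proposition \ref{prop.4.16.stablemodule.generalizada} by extracting one extra unit of depth from the horizontally linked hypothesis. The key observation is that, by Definition \ref{def.LH}, $M \cong \lambda^2 M = \Omega \textrm{Tr}(\lambda M)$; thus $M$ is a first syzygy module, and hence embeds into a finite projective (therefore, a finite free) $R$-module $F$.

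Assume $\textrm{Ext}_R^i(M, C) \neq 0$ (otherwise the conclusion is trivial). Invoking \cite[Corollary 4.6]{AB} as in the proof of Proposition \ref{prop.4.16.stablemodule.generalizada}, it suffices to show that $\textrm{depth}(R_{\mathfrak{p}}) \geq i + 1$ for every $\mathfrak{p} \in \textrm{Supp}_R(\textrm{Ext}_R^i(M, C))$. Fix such a $\mathfrak{p}$. The same localization argument as there yields $\textrm{G}_{C_\mathfrak{p}}\textrm{-dim}_{R_\mathfrak{p}}(M_\mathfrak{p}) \geq i$, and then Theorem \ref{teo.propriedades.Gcdimensao} gives
$$\textrm{depth}(R_{\mathfrak{p}}) \, = \, \textrm{G}_{C_\mathfrak{p}}\textrm{-dim}_{R_\mathfrak{p}}(M_\mathfrak{p}) + \textrm{depth}_{R_\mathfrak{p}}(M_\mathfrak{p}) \, \geq \, i + \textrm{depth}_{R_\mathfrak{p}}(M_\mathfrak{p}).$$
Thus the task reduces to showing $\textrm{depth}_{R_\mathfrak{p}}(M_\mathfrak{p}) \geq 1$.

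Since $\textrm{Ext}_R^i(M, C)_\mathfrak{p} \neq 0$, we have $\mathfrak{p} \in \textrm{Supp}_R(M)$, hence $M_\mathfrak{p} \neq 0$. Localizing the embedding $M \hookrightarrow F$ provides an inclusion $M_\mathfrak{p} \hookrightarrow F_\mathfrak{p}$ into a finite free $R_\mathfrak{p}$-module. Because $\textrm{depth}(R_\mathfrak{p}) \geq i \geq 1$, any element of $\mathfrak{p} R_\mathfrak{p}$ that is regular on $R_\mathfrak{p}$ is also regular on $F_\mathfrak{p}$, and consequently on the non-zero submodule $M_\mathfrak{p}$; this gives $\textrm{depth}_{R_\mathfrak{p}}(M_\mathfrak{p}) \geq 1$, which completes the proof.

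The potential obstacle here would be the temptation to require $M_\mathfrak{p}$ itself to remain horizontally linked over $R_\mathfrak{p}$, a property whose behavior under localization is subtle; fortunately the strategy above circumvents this issue entirely by using only the trivially localizable consequence that $M$ is torsionless (equivalently, an $R$-syzygy).
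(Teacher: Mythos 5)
Your proof is correct, but it takes a genuinely different route from the paper's. The paper also starts from the fact that horizontal linkage makes $M$ a syzygy, but it uses the \emph{$C$-syzygy} version: by \cite[Remark 2.15(i)]{link2016} one has $\textrm{Ext}_{R}^1(\textrm{Tr}_C(M),C)=0$, so by Lemma \ref{obs.syzygy}(ii) $M$ embeds into $L=P\otimes_R C$ with $\textrm{G}_C\textrm{-dim}_R(L)=0$; setting $N=\textrm{Coker}(M\to L)$, the vanishing of $\textrm{Ext}_{R}^i(L,C)$ gives $\textrm{Ext}_{R}^i(M,C)\cong\textrm{Ext}_{R}^{i+1}(N,C)$, and Proposition \ref{prop.4.16.stablemodule.generalizada} applied to $N$ (which has finite $\textrm{G}_C$-dimension by Proposition \ref{prop.item6.golod}) at index $i+1$ finishes the job. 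You instead use the \emph{$R$-syzygy} property $M\cong\lambda^2 M=\Omega\textrm{Tr}(\lambda M)$ and reopen the localization argument, extracting the extra unit from $\textrm{depth}_{R_\fp}(M_\fp)\geq 1$; this is valid, more self-contained (it avoids the citation to \cite{link2016} and the $C$-syzygy lemma), but redoes the local analysis rather than quoting Proposition \ref{prop.4.16.stablemodule.generalizada} as a black box. One small inaccuracy: over a semiperfect ring that is a product of several local rings, a finite projective module need not be free, so the parenthetical ``therefore, a finite free'' is not literally justified; this is harmless, since a finite projective module is a direct summand of a finite free one (so $M$ still embeds into a free module), and in any case $F_\fp$ is free over $R_\fp$, which is all your depth argument uses.
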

\begin{proof}
By \cite[Remark 2.15(i)]{link2016}, $\textrm{Ext}_{R}^1
(\textrm{Tr}_C (M), C) = 0$. By Lemma \ref{obs.syzygy}(ii), $M$ is a $C$-syzygy module, i.e there exists an exact sequence $0
\rightarrow M \rightarrow L$, where $L = P \otimes_R C$ and $P$ is
a finite projective $R$-module. Recall that 
 $\textrm{G}_C\textrm{-dim}_R(L)=0$.
Set $N = \textrm{Coker}\, (M \rightarrow
L)$. Since $\textrm{Ext}_{R}^i (L, C) = 0$ for all $i > 0$, we get $\textrm{Ext}_{R}^i (M, C) \cong \textrm{Ext}_{R}^{i+1} (N, C),
\textrm{ for all } i > 0$.
As $\textrm{G}_C\textrm{-dim}_R(M) < \infty$, we must have $\textrm{G}_C\textrm{-dim}_R(N) < \infty$ by Proposition
\ref{prop.item6.golod}. Therefore, using Proposition
\ref{prop.4.16.stablemodule.generalizada}, we obtain
$\textrm{grade}_R(\textrm{Ext}_{R}^i (M, C))  =  \textrm{grade}_R(\textrm{Ext}_{R}^{i+1} (N, C)) \geq  i + 1$, for all $i > 0$. \qed
\end{proof}

\begin{proposition}\label{cor.LH.Gperf.redu.formula.do.rgrade.e.gradeC} Let $M$ be a horizontally linked reduced
$\textrm{\emph{G}}_C$-perfect $R$-module of
$\textrm{\emph{G}}_C$-dimension $n$. Assume that
$\textrm{\emph{Tor}}^{R}_1 (\textrm{\emph{Tr}}(M), C) = 0$. Then,
$$\textrm{\emph{r.grade}}_R(M, C) + \textrm{\emph{r.grade}}_R (\lambda M \otimes_R C, C) \, = \, \textrm{\emph{grade}}_R (\textrm{\emph{Ext}}_{R}^n (M,\,C)).$$
\end{proposition}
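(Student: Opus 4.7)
The plan is to reduce the target formula to Proposition \ref{prop.Gperf.redu.formula.do.rgrade.e.gradeC}, which already gives
$$\textrm{r.grade}_R(M, C) + \textrm{r.grade}_R(\textrm{Tr}_C(M), C) \, = \, \textrm{grade}_R(\textrm{Ext}_{R}^n(M,\,C)) + 1.$$
Comparing with the desired identity, it therefore suffices to establish the relation
$$\textrm{r.grade}_R(\lambda M \otimes_R C, C) \, = \, \textrm{r.grade}_R(\textrm{Tr}_C(M), C) - 1.$$

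To obtain this, I would invoke Remark \ref{obs.2.11.sadeghi}, which under the standing hypothesis $\textrm{Tor}^{R}_1(\textrm{Tr}(M), C) = 0$ yields a short exact sequence
$$0 \rightarrow \lambda M \otimes_R C \rightarrow P \otimes_R C \rightarrow \textrm{Tr}_C(M) \rightarrow 0$$
for some finite projective $R$-module $P$. Recall (as noted in the proof of Lemma \ref{obs.syzygy}(ii)) that $\textrm{G}_C\textrm{-dim}_R(P \otimes_R C) = 0$, and hence $\textrm{Ext}_R^i(P \otimes_R C, C) = 0$ for all $i > 0$. This is precisely the vanishing condition needed for the middle term to serve as an admissible $X_1$ in Lemma \ref{lema.formula.do.rgrade.geral}.

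To apply that lemma with $k = 1$, I also need $\textrm{r.grade}_R(\textrm{Tr}_C(M), C) > 1$, i.e., $\textrm{Ext}_R^1(\textrm{Tr}_C(M), C) = 0$. This comes for free because $M$ is horizontally linked: by \cite[Remark 2.15(i)]{link2016} (as already used in the proof of Corollary \ref{cor.3.16.paper3.generalizado}), horizontal linkage of $M$ forces $\textrm{Ext}_R^1(\textrm{Tr}_C(M), C) = 0$. Consequently, Lemma \ref{lema.formula.do.rgrade.geral} yields exactly the desired shift
$$\textrm{r.grade}_R(\lambda M \otimes_R C, C) \, = \, \textrm{r.grade}_R(\textrm{Tr}_C(M), C) - 1,$$
and substituting this into Proposition \ref{prop.Gperf.redu.formula.do.rgrade.e.gradeC} produces the claimed equality.

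There is no serious obstacle in this argument; everything reduces cleanly to previously established tools. The only conceptual point to verify is that all hypotheses of Lemma \ref{lema.formula.do.rgrade.geral} are met, and these are supplied by the two standing assumptions: the Tor vanishing is what produces the relevant short exact sequence in the first place, while horizontal linkage is precisely what guarantees $\textrm{r.grade}_R(\textrm{Tr}_C(M), C) > 1$. Both hypotheses are thus used in an essential way, and together with Proposition \ref{prop.Gperf.redu.formula.do.rgrade.e.gradeC} the computation assembles into the stated formula.
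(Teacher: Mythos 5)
Your proposal is correct and follows essentially the same route as the paper's own proof: both use the short exact sequence from Remark \ref{obs.2.11.sadeghi} (available because of the Tor-vanishing hypothesis), invoke \cite[Remark 2.15(i)]{link2016} to get $\textrm{Ext}_{R}^{1}(\textrm{Tr}_C(M),C)=0$ from horizontal linkage so that Lemma \ref{lema.formula.do.rgrade.geral} applies with $k=1$, and then substitute the resulting shift into Proposition \ref{prop.Gperf.redu.formula.do.rgrade.e.gradeC}. No gaps.
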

\begin{proof} Consider a short exact sequence $0 \rightarrow \lambda M \otimes_R C \rightarrow P \otimes_R C\rightarrow \textrm{Tr}_C (M) \rightarrow 0$, where $P$ is a projective $R$-module
(see Remark \ref{obs.2.11.sadeghi}). Since $M$ is horizontally linked, \cite[Remark 2.15(i)]{link2016} gives
$\textrm{Ext}_{R}^{1} (\textrm{Tr}_C (M), C)=0$, which forces
$\textrm{r.grade}_R(\textrm{Tr}_C (M), C) > 1$. By Lemma
\ref{lema.formula.do.rgrade.geral}, we have
$\textrm{r.grade}_R (\lambda M \otimes_R C, C) =
\textrm{r.grade}_R (\textrm{Tr}_C (M), C) - 1 $. Now the
assertion is clear by Proposition
\ref{prop.Gperf.redu.formula.do.rgrade.e.gradeC}. \qed
\end{proof}

\medskip

We are now prepared to prove the following theorem, which characterizes, numerically, when a reduced $\textrm{G}_C$-perfect module (subject to a few additional conditions) is horizontally linked. It generalizes and improves \cite[Proposition
3.5(ii)]{link2013}.

\begin{theorem}\label{teo.prop.3.5ii.paper3.generalizada}
Let $M$ be a stable reduced $\textrm{\emph{G}}_C$-perfect
$R$-module of $\textrm{\emph{G}}_C$-dimension $n$. Assume that
$\lambda M \in \mathcal{A}_C$. Then the following assertions are
equivalent:
\begin{itemize}
  \item[(i)] $M$ is horizontally linked;
  \item[(ii)] $\textrm{\emph{r.grade}}_R(M, C) + \textrm{\emph{r.grade}}_R(\lambda M) = \textrm{\emph{grade}}_R (\textrm{\emph{Ext}}_R^n(M, C))$;
  \item[(iii)] $\textrm{\emph{grade}}_R(\textrm{\emph{Ext}}_{R}^n (M, C)) \geq n+1$;
  \item[(iv)] $\textrm{\emph{grade}}_R (\textrm{\emph{Ext}}_{R}^i(M, C)) \geq i+1,$ for all $i > 0$.
\end{itemize}
\end{theorem}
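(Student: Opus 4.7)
The approach will be to establish the cycle $(i) \Rightarrow (iv) \Rightarrow (iii) \Rightarrow (i)$ together with the implications $(i) \Rightarrow (ii)$ and $(ii) \Rightarrow (iii)$, leveraging the hypothesis $\lambda M \in \mathcal{A}_C$ throughout.

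First I would record two consequences of $\lambda M \in \mathcal{A}_C$. From the short exact sequence $0 \to \lambda M \to P \to \textrm{Tr}(M) \to 0$ (with $P$ finite projective, coming from the definition of $\lambda$) and Example~\ref{ex.classe.de.Auslander}(i), it follows that $\textrm{Tr}(M) \in \mathcal{A}_C$; in particular $\textrm{Tor}^{R}_1(\textrm{Tr}(M), C) = 0$, which opens the door to Proposition~\ref{cor.LH.Gperf.redu.formula.do.rgrade.e.gradeC} and Remark~\ref{obs.2.15.ii.sadeghi}. Next, using a projective resolution $P_\bullet \to \lambda M$ together with the vanishing $\textrm{Tor}^{R}_i(\lambda M, C) = 0$ for $i > 0$ and the isomorphism $R \cong \textrm{Hom}_R(C, C)$, Hom--tensor adjunction shows that $P_\bullet \otimes_R C$ is an $\textrm{add}_R(C)$-resolution of $\lambda M \otimes_R C$ and yields natural isomorphisms $\textrm{Ext}_{R}^i(\lambda M \otimes_R C, C) \cong \textrm{Ext}_{R}^i(\lambda M, R)$ for all $i \geq 0$; in particular $\textrm{r.grade}_R(\lambda M \otimes_R C, C) = \textrm{r.grade}_R(\lambda M)$.

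With these in hand, the individual steps go quickly. For $(i) \Rightarrow (iv)$ I would apply Proposition~\ref{prop.LH.Gcdim.finita.implica.gradeEXT.maior.n} directly. The step $(iv) \Rightarrow (iii)$ is immediate by specialization to $i = n$. For $(iii) \Rightarrow (i)$, Proposition~\ref{lema.condição.suficiente.para.EXT.TrC.nulo} supplies $\textrm{Ext}_{R}^1(\textrm{Tr}_C(M), C) = 0$; Remark~\ref{obs.2.15.ii.sadeghi} (legitimate since $\textrm{Tr}(M) \in \mathcal{A}_C$) transports this to $\textrm{Ext}_{R}^1(\textrm{Tr}(M), R) = 0$; then stability of $M$ and Theorem~\ref{teo2.paper1} give horizontal linkage. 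For $(i) \Rightarrow (ii)$, Proposition~\ref{cor.LH.Gperf.redu.formula.do.rgrade.e.gradeC} yields $\textrm{r.grade}_R(M, C) + \textrm{r.grade}_R(\lambda M \otimes_R C, C) = \textrm{grade}_R(\textrm{Ext}_{R}^n(M, C))$, and the preliminary step above converts the middle term into $\textrm{r.grade}_R(\lambda M)$. For $(ii) \Rightarrow (iii)$, use that $\textrm{r.grade}_R(M, C) = n$ by the reduced $\textrm{G}_C$-perfection hypothesis and that $\textrm{r.grade}_R(\lambda M) \geq 1$ by the defining condition $i > 0$ in reduced grade.

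The main obstacle I anticipate is the preliminary comparison $\textrm{Ext}_{R}^i(\lambda M \otimes_R C, C) \cong \textrm{Ext}_{R}^i(\lambda M, R)$, since the paper does not state it in exactly this form for $\lambda M$. The argument is standard but worth writing carefully: the Auslander-class condition guarantees that $P_\bullet \otimes_R C$ resolves $\lambda M \otimes_R C$, its terms lie in $\textrm{add}_R(C)$ (which is $\textrm{Hom}_R(-, C)$-acyclic because $\textrm{Ext}_{R}^{>0}(C, C) = 0$), and Hom--tensor adjunction together with $R \cong \textrm{Hom}_R(C, C)$ collapses the computation to the Ext groups into $R$. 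Everything else is then a direct application of results already developed in the paper.
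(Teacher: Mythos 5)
Your proposal is correct and follows essentially the same route as the paper: the same cycle of implications, with (iii)$\Rightarrow$(i) via Proposition~\ref{lema.condição.suficiente.para.EXT.TrC.nulo}, Remark~\ref{obs.2.15.ii.sadeghi} and Theorem~\ref{teo2.paper1}, (i)$\Rightarrow$(ii) via Proposition~\ref{cor.LH.Gperf.redu.formula.do.rgrade.e.gradeC}, and (i)$\Rightarrow$(iv) via Proposition~\ref{prop.LH.Gcdim.finita.implica.gradeEXT.maior.n}. The only difference is that you prove the comparison $\textrm{Ext}_{R}^i(\lambda M\otimes_R C, C)\cong \textrm{Ext}_{R}^i(\lambda M, R)$ by hand (a correct acyclic-resolution argument), whereas the paper simply cites Takahashi--White \cite[Theorem 4.1 and Corollary 4.2]{TakWhi2010}.
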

\begin{proof} Let us first prove the implication (i) $\Rightarrow$ (ii). Since $\lambda M \in \mathcal{A}_C$, it
follows by \cite[Theorem 4.1 e Corollary 4.2]{TakWhi2010} that
$\textrm{Ext}_R^i(\lambda M \otimes_R C, C)  \cong 
\textrm{Ext}_R^i(\lambda M, R)$, for all $i > 0$. Then
$\textrm{r.grade}_R(\lambda M) = \textrm{r.grade}_R
(\lambda M \otimes_R C, C)$. Consider an exact sequence
$0 \rightarrow \lambda M \rightarrow P \rightarrow \textrm{Tr}(M)
\rightarrow 0$, where $P$ is a finite projective $R$-module. By
Example \ref{ex.classe.de.Auslander}(i), $\textrm{Tr}(M) \in
\mathcal{A}_C$. In particular, $\textrm{Tor}^{R}_1 (\textrm{Tr}(M), C) = 0$. Therefore, by Proposition
\ref{cor.LH.Gperf.redu.formula.do.rgrade.e.gradeC}, we have
$$\textrm{r.grade}_R(M, C) + \textrm{r.grade}_R(\lambda M) \, = \, \textrm{grade}_R(\textrm{Ext}_R^n(M,\,C)).$$

To show that (ii) $\Rightarrow$ (iii), simply note that $\textrm{r.grade}_R(M, C)=n$ and
$\textrm{r.grade}_R(\lambda M) \geq 1$, and hence
$$\textrm{grade}_R(\textrm{Ext}_R^n(M,\,C)) \, = \,
\textrm{r.grade}_R(M, C) + \textrm{r.grade}_R(\lambda M) \, \geq \, n+1.$$

Now we prove (iii) $\Rightarrow$ (i). By Proposition
\ref{lema.condição.suficiente.para.EXT.TrC.nulo}, we get
$\textrm{Ext}_{R}^{1} (\textrm{Tr}_C (M), C) = 0$. As $\lambda M
\in \mathcal{A}_C$, we also have $\textrm{Tr}(M) \in
\mathcal{A}_C$. Thus, by Remark \ref{obs.2.15.ii.sadeghi} it follows that
$\textrm{Ext}_{R}^{1} (\textrm{Tr}(M), R) = 0$. Because in addition $M$ is stable, Theorem \ref{teo2.paper1} gives that $M$ is horizontally linked.

Thus, we have shown that the statements (i), (ii), (iii) are equivalent. To conclude, the implication (iv) $\Rightarrow$ (iii) is obvious, and notice for instance that (i) $\Rightarrow$ (iv)
follows readily by Proposition
\ref{prop.LH.Gcdim.finita.implica.gradeEXT.maior.n}. \qed
\end{proof}

\medskip

We close the section with the expected generalization of \cite[Proposition
3.5(i)]{link2013}.

\begin{proposition}\label{cor.prop.3.5.paper3.generalizada}
Let $M$ be a reduced $\textrm{\emph{G}}_C$-perfect $R$-module of
$\textrm{\emph{G}}_C$-dimension $n$. Assume that $\lambda M \in
\mathcal{A}_C$. Then $$\textrm{\emph{Ext}}_{R}^i (\lambda M,\,R) \, \cong \,
\textrm{\emph{Ext}}_{R}^{i+n} (\textrm{\emph{Ext}}_{R}^n (M,\,C),\,C)$$ for all $i>0$.
\end{proposition}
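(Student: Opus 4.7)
The plan is to combine Corollary \ref{cor.EXT.Lambda} with a standard transfer isomorphism for modules in the Auslander class $\mathcal{A}_C$.

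First I would observe that the hypothesis $\lambda M\in \mathcal{A}_C$ also forces $\textrm{Tr}(M)\in \mathcal{A}_C$. Indeed, the operator $\lambda=\Omega\textrm{Tr}$ produces a natural short exact sequence
$$0 \, \rightarrow \, \lambda M \, \rightarrow \, P \, \rightarrow \, \textrm{Tr}(M) \, \rightarrow \, 0,$$
where $P$ is a finite projective $R$-module. Since any module of finite flat dimension lies in $\mathcal{A}_C$ (see Example \ref{ex.classe.de.Auslander}(i)), we have $P\in \mathcal{A}_C$, and the two-out-of-three property quoted in the same example then yields $\textrm{Tr}(M)\in \mathcal{A}_C$. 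In particular, this gives the vanishing $\textrm{Tor}^R_1(\textrm{Tr}(M), C)=0$, which is precisely the hypothesis required to invoke Corollary \ref{cor.EXT.Lambda}.

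Applying Corollary \ref{cor.EXT.Lambda}, I get
$$\textrm{Ext}_R^i(\lambda M\otimes_R C,\,C) \, \cong \, \textrm{Ext}_R^{i+n}(\textrm{Ext}_R^n(M,\,C),\,C) \quad \text{for all } i>0.$$
It now remains to identify the left-hand side with $\textrm{Ext}_R^i(\lambda M, R)$. This is exactly the standard transfer isomorphism available for any $X\in \mathcal{A}_C$, namely $\textrm{Ext}_R^i(X\otimes_R C, C)\cong \textrm{Ext}_R^i(X, R)$ for $i>0$, as supplied by \cite[Theorem 4.1 and Corollary 4.2]{TakWhi2010}; note that this is the very isomorphism already used at the beginning of the proof of Theorem \ref{teo.prop.3.5ii.paper3.generalizada}. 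Applying it to $X=\lambda M\in \mathcal{A}_C$ and comparing with the displayed isomorphism delivers the conclusion.

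No real obstacle arises: the argument is a direct assembly of Corollary \ref{cor.EXT.Lambda} and the Auslander-class transfer formula, and the only point that needs a brief verification is the preservation of $\mathcal{A}_C$-membership under the short exact sequence defining $\lambda$, handled in the first paragraph.
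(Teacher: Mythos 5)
Your proposal is correct and follows essentially the same route as the paper: the paper's proof likewise deduces $\mathrm{Tor}^R_1(\mathrm{Tr}(M),C)=0$ from $\lambda M\in\mathcal{A}_C$, applies Corollary \ref{cor.EXT.Lambda}, and identifies $\mathrm{Ext}_R^i(\lambda M\otimes_R C, C)$ with $\mathrm{Ext}_R^i(\lambda M, R)$ via the Takahashi--White transfer isomorphism (citing the proof of Theorem \ref{teo.prop.3.5ii.paper3.generalizada} for these facts). You have merely written out explicitly the two-out-of-three argument that the paper leaves as a cross-reference.
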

\begin{proof} As seen in the proof of Theorem \ref{teo.prop.3.5ii.paper3.generalizada}, we have $\textrm{Ext}_{R}^i (\lambda M \otimes_R C, C) \cong
\textrm{Ext}_{R}^i (\lambda M, R)$ for all $i>0$, and moreover $\textrm{Tor}^{R}_1 (\textrm{Tr}(M),
C) = 0$. Now the result follows by Corollary \ref{cor.EXT.Lambda}.
\qed
\end{proof}

\section{Reduced $\textrm{G}_C$-perfection versus $C$-$k$-torsionlessness} \label{sec34}

We begin this last section by recalling the following definition, given in \cite[Definition
2]{huang2001}, which generalizes the notion of $k$-torsionless module, where $k\geq 0$ is an integer.

\begin{definition}\label{def2.huang} \rm
Let $M$ be a finite $R$-module and let $k\geq0$ be an integer. Then $M$ is called $C$-$k$-\emph{torsionless} if either $k=0$ or $k\geq 1$ and $$\textrm{Ext}_{R}^i(\textrm{Tr}_C(M),\,C) \, = \, 0$$ for all $1 \leq i \leq k$. 
\end{definition}

It is worth mentioning that \cite{huang2001} employed the terminology {\it $\omega$-$k$-torsionfree}, and that yet other names have appeared in the literature; see, for instance, \cite{arayaiima} and \cite{link2015}. 

In general, the classes of $C$-$k$-torsionless modules and reduced
$\textrm{G}_C$-perfect modules are incomparable in the sense that there is no inclusion between them, as the following remark
illustrates.

\begin{remark} \label{ex0.GCperfeito.reduzido.e.cktorsionless} \rm
Let $k$ be a positive integer and let $M$ be a non-zero finite $R$-module.
\begin{itemize}
  \item[(i)] If $\textrm{G}_{C}\textrm{-dim}_R(M) = 0$ then by Proposition \ref{prop.prop.2.6.geng}(iii) and Remark \ref{obs.carac.GCdim.zero} it follows that $M$ is $C$-$k$-torsionless, but obviously $M$ cannot be reduced $\textrm{G}_C$-perfect.
  \item[(ii)] If $M$ is $\textrm{G}_C$-perfect of positive grade, then $M$ is reduced
  $\textrm{G}_C$-perfect, and applying Example \ref{ex2.GCperfeito.reduzido}(i) (with $n=1$) yields in particular that the module $\mathcal{T}^C_{1} (M) = \textrm{Tr}_C (M)$ has reduced grade 1, thus forcing $\textrm{Ext}_{R}^1 (\textrm{Tr}_C (M), C) \neq
  0$, which prevents $M$ from being $C$-$k$-torsionless.
\end{itemize}
\end{remark}

The next result provides a necessary and sufficient condition for
a reduced $\textrm{G}_C$-perfect module to be $C$-$k$-torsionless.

\begin{proposition}\label{prop.cktorsionless.gcperfeito}
Let $M$ be a reduced $\textrm{\emph{G}}_C$-perfect $R$-module of
$\textrm{\emph{G}}_C$-dimension $n$, and let $k\geq0$ be an
integer. Then, $M$ is $C$-$k$-torsionless if and only if
$$\textrm{\emph{grade}}_R(\textrm{\emph{Ext}}_{R}^n (M,\,C)) \,
\geq \, n + k.$$
\end{proposition}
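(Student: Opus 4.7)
The plan is to reduce the biconditional to an elementary bookkeeping argument using the shifting isomorphism from Proposition \ref{prop.EXT.TrC} together with the grade lower bound from Proposition \ref{prop.4.16.stablemodule.generalizada}. First I would dispose of the trivial case $k=0$: the vanishing condition is vacuous by Definition \ref{def2.huang}, and the inequality $\textrm{grade}_R(\textrm{Ext}_R^n(M,C))\geq n$ is exactly Proposition \ref{prop.4.16.stablemodule.generalizada} applied to $M$ (which has finite $\textrm{G}_C$-dimension $n$, and whose $n$-th relative Ext is non-zero by Proposition \ref{prop.6.1.7.sather}, although non-vanishing is not even needed with the convention $\textrm{grade}_R(0)=\infty$).

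Assume now $k\geq 1$. The key input is Proposition \ref{prop.EXT.TrC}, which says that, since $M$ is reduced $\textrm{G}_C$-perfect of $\textrm{G}_C$-dimension $n$,
$$\textrm{Ext}_R^i(\textrm{Tr}_C(M),\,C) \, \cong \, \textrm{Ext}_R^{\,i+n-1}(\textrm{Ext}_R^n(M,\,C),\,C)$$
for every $i>0$. Therefore $M$ is $C$-$k$-torsionless, i.e.\ the left-hand side vanishes for $1\leq i\leq k$, if and only if
$$\textrm{Ext}_R^{\,j}(\textrm{Ext}_R^n(M,\,C),\,C) \, = \, 0 \quad \text{for all } \; n \leq j \leq n+k-1.$$

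Next I would translate the condition $\textrm{grade}_R(\textrm{Ext}_R^n(M,C))\geq n+k$ into the same vanishing statement. By Lemma \ref{lema.grade.e.gradeC},
$$\textrm{grade}_R(\textrm{Ext}_R^n(M,\,C)) \, = \, \inf \{ j \geq 0 \mid \textrm{Ext}_R^{\,j}(\textrm{Ext}_R^n(M,\,C),\,C) \neq 0 \},$$
so the inequality $\geq n+k$ is equivalent to the vanishing of $\textrm{Ext}_R^{\,j}(\textrm{Ext}_R^n(M,C),C)$ for every $0\leq j\leq n+k-1$. However, Proposition \ref{prop.4.16.stablemodule.generalizada} automatically guarantees this vanishing in the range $0\leq j\leq n-1$, so the inequality $\geq n+k$ is equivalent to the restricted vanishing in the range $n\leq j\leq n+k-1$. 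Combining this with the reformulation of $C$-$k$-torsionlessness in the previous paragraph yields the desired equivalence. I do not anticipate any real obstacle: the argument is a direct chaining of Proposition \ref{prop.EXT.TrC}, Lemma \ref{lema.grade.e.gradeC} and Proposition \ref{prop.4.16.stablemodule.generalizada}, and the only point requiring minimal care is handling the degenerate value $k=0$.
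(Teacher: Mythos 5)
Your proof is correct and follows essentially the same route as the paper: the paper simply invokes Proposition \ref{prop.Gperf.redu.formula.do.rgrade.e.gradeC} (the identity $\textrm{r.grade}_R(M,C)+\textrm{r.grade}_R(\textrm{Tr}_C(M),C)=\textrm{grade}_R(\textrm{Ext}_{R}^n(M,C))+1$), whose own proof is exactly the combination of Proposition \ref{prop.EXT.TrC}, Lemma \ref{lema.grade.e.gradeC} and Proposition \ref{prop.4.16.stablemodule.generalizada} that you carry out by hand. Nothing is missing; your explicit handling of the degenerate case $k=0$ is a small point the paper leaves implicit.
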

\begin{proof}
Assume that $M$ is $C$-$k$-torsionless. By definition,
$\textrm{Ext}_{R}^i(\textrm{Tr}_C (M), C) = 0$ for all $1 \leq i
\leq k$, and so $\textrm{r.grade}_R(\textrm{Tr}_C (M), C) \geq
k+1$. By Proposition
\ref{prop.Gperf.redu.formula.do.rgrade.e.gradeC}, we have
$$\textrm{grade}_R(\textrm{Ext}_{R}^n (M,\,C)) \, = \,  \textrm{r.grade}_R(\textrm{Tr}_C (M), C) + n - 1 \, \geq \, n+k.$$
Conversely, suppose $\textrm{grade}_R(\textrm{Ext}_{R}^n
(M, C)) \geq n+k.$ By using Proposition
\ref{prop.Gperf.redu.formula.do.rgrade.e.gradeC} again, we get
$$\textrm{r.grade}_R(\textrm{Tr}_C (M), C) \, = \, \textrm{grade}_R(\textrm{Ext}_{R}^n (M,\,C)) - n + 1 \, \geq \, k+1.$$ Therefore, $\textrm{Ext}_{R}^i(\textrm{Tr}_C (M), C) = 0$ for all $1 \leq i \leq k$, i.e. $M$ is $C$-$k$-torsionless. \qed
\end{proof}

\medskip

In this context we may naturally raise the following question.

\begin{question}\rm
Let $M$ be a $C$-$k$-torsionless $R$-module. Under what conditions
is $M$ reduced $\textrm{G}_C$-perfect?
\end{question}

\begin{corollary}\label{cor.prop.cktorsionless.gcperfeito}
Let $M$ be a stable reduced $\textrm{\emph{G}}_C$-perfect
$R$-module of $\textrm{\emph{G}}_C$-dimension $n$. Assume that
$\lambda M \in \mathcal{A}_C$. The following conditions are
equivalent:
\begin{itemize}
  \item[(i)] $M$ is horizontally linked;
  \item[(ii)] $M$ is $C$-$1$-torsionless;
  \item[(iii)] $M$ is $C$-syzygy.
\end{itemize}
\end{corollary}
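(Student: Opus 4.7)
The plan is to prove the cycle of implications using tools already at our disposal, with only the step (ii)$\Rightarrow$(i) requiring any real argument. First I would observe that the equivalence (ii)$\Leftrightarrow$(iii) is essentially a restatement of Lemma \ref{obs.syzygy}(ii): by definition, $M$ being $C$-$1$-torsionless means exactly that $\textrm{Ext}_R^1(\textrm{Tr}_C(M), C) = 0$, which is precisely the stated characterization of $C$-syzygy modules.

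For (i)$\Rightarrow$(ii), I would appeal to \cite[Remark 2.15(i)]{link2016}, as was already used in the proofs of Proposition \ref{cor.LH.Gperf.redu.formula.do.rgrade.e.gradeC} and Theorem \ref{teo.prop.3.5ii.paper3.generalizada}: horizontal linkage immediately forces $\textrm{Ext}_R^1(\textrm{Tr}_C(M), C) = 0$, i.e. $C$-$1$-torsionlessness.

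For (ii)$\Rightarrow$(i), which is the crux, I would start from the defining short exact sequence
\[
0 \, \rightarrow \, \lambda M \, \rightarrow \, P \, \rightarrow \, \textrm{Tr}(M) \, \rightarrow \, 0,
\]
where $P$ is a finite projective $R$-module. Since $\lambda M \in \mathcal{A}_C$ by hypothesis and $P$ belongs to $\mathcal{A}_C$ (having finite flat dimension), Example \ref{ex.classe.de.Auslander}(i) yields $\textrm{Tr}(M) \in \mathcal{A}_C$. Now Remark \ref{obs.2.15.ii.sadeghi} provides the comparison isomorphism
\[
\textrm{Ext}_R^1(\textrm{Tr}(M),\,R) \, \cong \, \textrm{Ext}_R^1(\textrm{Tr}_C(M),\,C) \, = \, 0,
\]
the vanishing coming from hypothesis (ii). Since $M$ is stable, Theorem \ref{teo2.paper1} concludes that $M$ is horizontally linked.

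No genuine obstacle should arise: the assumption $\lambda M \in \mathcal{A}_C$ is exactly the device that transfers the $C$-relative vanishing (which is what $C$-$1$-torsionlessness provides) into the absolute vanishing required by the Martsinkovsky--Strooker criterion in Theorem \ref{teo2.paper1}. It is worth remarking in passing that the full strength of reduced $\textrm{G}_C$-perfection of dimension $n$ is not invoked in this argument, only stability of $M$ and the Auslander-class condition on $\lambda M$; this suggests the corollary holds in slightly greater generality than stated.
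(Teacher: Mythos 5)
Your proof is correct, and it takes a genuinely lighter route than the paper on one of the equivalences. The paper also proves (i)$\Leftrightarrow$(iii) exactly as you do — $\textrm{Tr}(M)\in\mathcal{A}_C$ from $\lambda M\in\mathcal{A}_C$, then Lemma \ref{obs.syzygy}(ii) and Remark \ref{obs.2.15.ii.sadeghi} to convert $\textrm{Ext}_R^1(\textrm{Tr}_C(M),C)=0$ into $\textrm{Ext}_R^1(\textrm{Tr}(M),R)=0$, then Theorem \ref{teo2.paper1} — but it handles (i)$\Leftrightarrow$(ii) quite differently: it invokes Proposition \ref{prop.cktorsionless.gcperfeito} with $k=1$ to translate $C$-$1$-torsionlessness into the grade inequality $\textrm{grade}_R(\textrm{Ext}_R^n(M,C))\geq n+1$ and then appeals to Theorem \ref{teo.prop.3.5ii.paper3.generalizada}, both of which genuinely use reduced $\textrm{G}_C$-perfection. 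You instead observe that (ii) and (iii) are both, by definition and by Lemma \ref{obs.syzygy}(ii), the single vanishing $\textrm{Ext}_R^1(\textrm{Tr}_C(M),C)=0$, so the whole corollary collapses to the one transfer argument. This buys exactly what you note at the end: the hypotheses of reduced $\textrm{G}_C$-perfection and finite $\textrm{G}_C$-dimension are not needed for the equivalence itself — stability plus $\lambda M\in\mathcal{A}_C$ suffice — whereas the paper's route ties the statement to the numerical grade characterization of Theorem \ref{teo.prop.3.5ii.paper3.generalizada}, which is presumably why the hypothesis is carried along. One small remark: for (i)$\Rightarrow$(ii) you could even avoid the external citation to \cite[Remark 2.15(i)]{link2016}, since Theorem \ref{teo2.paper1} gives $\textrm{Ext}_R^1(\textrm{Tr}(M),R)=0$, whence $M$ is an $R$-syzygy and therefore a $C$-syzygy by Lemma \ref{obs.syzygy}(i), and Lemma \ref{obs.syzygy}(ii) then yields the required vanishing.
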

\begin{proof} First, by Proposition
\ref{prop.cktorsionless.gcperfeito}, the assertion (ii) is equivalent to $\textrm{{grade}}_R(\textrm{{Ext}}_{R}^n (M, C)) \geq n+1$, so that the equivalence between (i) and (ii) is given by Theorem \ref{teo.prop.3.5ii.paper3.generalizada}. Now note that ${\rm Tr}(M)\in \mathcal{A}_C$, and hence, by Lemma \ref{obs.syzygy} and Remark \ref{obs.2.15.ii.sadeghi}, the assertion (iii) is seen to be equivalent to $\textrm{Ext}_{R}^1(\textrm{Tr}(M), R) = 0$. Since $M$ is stable, this vanishing is equivalent to (i) by virtue of Theorem \ref{teo2.paper1}. 
\qed
\end{proof}

\medskip

In the example below (which appears to be new), we are concerned with the description of a family of modules lying in the
intersection of the classes of
$C$-$k$-torsionless modules and reduced $\textrm{G}_C$-perfect modules.

\begin{example} \label{ex.GCperfeito.reduzido.e.cktorsionless} \rm
Let $n$ be a positive integer, $k$ a non-negative integer, and $M$
a non-zero finite $R$-module such that $\textrm{grade}_R(M) \geq
n+k$. Then $\mathcal{T}^C_{n} (M)$ is both $C$-$k$-torsionless and
reduced $\textrm{G}_C$-perfect of $\textrm{G}_C$-dimension $n$.
\end{example}
\begin{proof}
Set $M' = \mathcal{T}^C_{n} (M)$. By Example
\ref{ex2.GCperfeito.reduzido}(i), $M'$ is a reduced
$\textrm{G}_C$-perfect $R$-module of $\textrm{G}_C$-dimension $n$. Thus, 
by Proposition
\ref{prop.cktorsionless.gcperfeito}, proving that $\textrm{grade}_R(\textrm{Ext}_{R}^n (M',\,C)) \,
\geq \, n+k$ will do the job. By
Proposition \ref{prop.Gperf.redu.formula.do.rgrade.e.gradeC}, we
have
\begin{equation}\label{eq1.ex.GCperfeito.reduzido.e.cktorsionless}
\textrm{grade}_R(\textrm{Ext}_{R}^n (M',\,C)) \, = \, n +
\textrm{r.grade}_R(\textrm{Tr}_C (M'), C) - 1.
\end{equation}
By Proposition \ref{prop.lemma.2.12.paper5}, we have an exact sequence $0 \rightarrow \Omega^{n-1} M
\rightarrow \textrm{Tr}_C (M') \rightarrow X \rightarrow 0$, where
$\textrm{G}_C\textrm{-dim}_R(X)=0$. Thus,
\begin{equation}\label{eq2.ex.GCperfeito.reduzido.e.cktorsionless}
\textrm{r.grade}_R(\Omega^{n-1} M, C) \, = \, \textrm{r.grade}_R
(\textrm{Tr}_C (M'), C).
\end{equation}
Now consider the exact sequence $$0 \rightarrow \Omega^{n-1} M
\rightarrow P_{n-2} \rightarrow \cdots \rightarrow P_0 \rightarrow
M \rightarrow 0,$$ where $P_j$ is a finite projective $R$-module for each $j = 0, \ldots, n-2$. Because $\textrm{grade}_R(M) \geq n+k \geq n > n-1$ and $\textrm{Ext}_{R}^i (P_j, C)=0$ for all $i > 0$ and $j = 0, \ldots, n-2$, it follows by Lemma \ref{lema.formula.do.rgrade.geral} that
\begin{equation}\label{eq3.ex.GCperfeito.reduzido.e.cktorsionless}
\textrm{r.grade}_R(\Omega^{n-1} M, C) \, = \, \textrm{r.grade}_R
(M, C) - n + 1.
\end{equation}
Therefore, by (\ref{eq1.ex.GCperfeito.reduzido.e.cktorsionless}),
(\ref{eq2.ex.GCperfeito.reduzido.e.cktorsionless}), and
(\ref{eq3.ex.GCperfeito.reduzido.e.cktorsionless}), 
$$\textrm{grade}_R(\textrm{Ext}_{R}^n (M',\,C)) \, = \,
\textrm{r.grade}_R(M, C) \, = \, \textrm{grade}_R(M) \, \geq \, n+k,$$ as needed. \qed
\end{proof}

\medskip

Let us close the paper with a related example in the case $C=R$. Let $n$ be a positive integer and let
$M$ be a finite $R$-module with (possibly infinite) projective dimension at least $n$, so that $\Omega^{n-1} M$ is not  projective. Thus, with the
objective of verifying the statement given below,
we may assume that the transpose $\mathcal{T}_{n}(M) = 
\textrm{Tr}\Omega^{n-1} M$ is a stable $R$-module. Finally, we use Corollary \ref{cor.prop.cktorsionless.gcperfeito}
and Example \ref{ex.GCperfeito.reduzido.e.cktorsionless}.

\begin{example} \label{ex2.GCperfeito.reduzido.e.cktorsionless} \rm If, in addition, $\textrm{grade}_R(M) \geq n+k$ for some positive integer $k$, then $\mathcal{T}_{n}(M)$ is $k$-torsionless, reduced $\textrm{G}$-perfect of G-dimension $n$, and horizontally linked.
\end{example}

\bigskip

\noindent{\bf Acknowledgements.} The first author was partially supported by CNPq (Brazil), grant 301029/2019-9. The second author was supported by a CAPES (Brazil) Doctoral Scholarship.

\end{document}